\definecolor{ForestGreen}{RGB}{34,139,34}
\tikzset{
	>=stealth',
	true/.style={
		rectangle,
		draw=black, very thick,
		text width=6.5em,
		minimum height=2em,
		text centered,
		fill=gray, opacity = 0.5},
	punkt/.style={
		rectangle,
		rounded corners,
		draw=black, very thick,
		text width=6.5em,
		minimum height=2em,
		text centered},
	est/.style={
		circle,
		draw=black, very thick,
		text centered},
	shade/.style={
		circle,
		draw=black, very thick, fill=gray!50,
		text centered},
	weight/.style={
		circle,
		draw=black, very thick,
		text width=6.5em,
		minimum height=2em,
		text centered},
	pil/.style={
		->,
		thick,
		shorten <=2pt,
		shorten >=2pt,},
	double/.style={
		<->,
		thick,
		shorten <=2pt,
		shorten >=2pt,},
	dash/.style={
		dashed,
		thick,
		shorten <=2pt,
		shorten >=2pt,},
	dashdouble/.style={
		<->,
		dashed,
		thick,
		shorten <=2pt,
		shorten >=2pt,}
}
\newcolumntype{C}[1]{>{\centering\arraybackslash}p{#1}}
\def\IIFF{\mathbb{IF}}
\def\IF{\mathsf{IF}}
\def\var{\mathsf{var}}
\def\cov{\mathsf{cov}}
\def\Holder{\text{H\"{o}lder}}
\def\Bias{\mathsf{Bias}}
\def\se{\mathsf{se}}
\renewcommand{\hat}{\widehat}
\renewcommand{\tilde}{\widetilde}
\newcommand{\myreferences}{Master.bib}
\theoremstyle{plain}
\newtheorem{theorem}{Theorem}
\newtheorem{lemma}{Lemma}
\newtheorem{proposition}{Proposition}
\newtheorem{corollary}{Corollary}
\theoremstyle{definition}
\newtheorem{definition}{Definition}
\newtheorem{assumption}{Assumption}
\newtheorem{problem}{Problem}
\newtheorem{remark}{Remark}
\newtheorem*{remark*}{Remark}
\newcommand{\Id}{\bbI}
\newcommand{\diff}{{\mathrm d}}
\newcommand{\tr}{\widetilde{r}}
\newcommand{\sfv}{{\mathsf{v}}}
\newcommand{\sfz}{{\mathsf{z}}}
\newcommand{\sfH}{{\mathsf{H}}}
\newcommand{\sfZ}{{\mathsf{Z}}}
\newcommand{\calA}{{\mathcal{A}}}
\newcommand{\calB}{{\mathcal{B}}}
\newcommand{\calE}{{\mathcal{E}}}
\newcommand{\calG}{{\mathcal{G}}}
\newcommand{\calH}{{\mathcal{H}}}
\newcommand{\calL}{{\mathcal{L}}}
\newcommand{\calN}{{\mathcal{N}}}
\newcommand{\calO}{{\mathcal{O}}}
\newcommand{\calP}{{\mathcal{P}}}
\newcommand{\calX}{{\mathcal{X}}}
\renewcommand{\tilde}{\widetilde}
\renewcommand{\hat}{\widehat}
\newcommand{\bbE}{{\mathbb{E}}}
\newcommand{\bbI}{{\mathbb{I}}}
\newcommand{\bbP}{{\mathbb{P}}}
\newcommand{\bbQ}{{\mathbb{Q}}}
\newcommand{\bbR}{{\mathbb{R}}}
\newcommand{\bbS}{{\mathbb{S}}}
\newcommand{\bbU}{{\mathbb{U}}}
\newcommand{\bbV}{{\mathbb{V}}}
\newcommand{\bbZ}{{\mathbb{Z}}}
\newcommand{\sfzbar}{\bar{\sfz}}
\newcommand{\sfvbar}{\bar{\sfv}}
\newcommand{\sfZbar}{\bar{\sfZ}}
\def\Diag{\mathsf{Diag}}
\def\ubar#1{\underline{\sbox\tw@{#1}\dp\tw@\z@\box\tw@}}
\def\Bias{\mathsf{bias}}
\def\csBias{\mathsf{cs\mbox{-}bias}}
\def\EB{\mathsf{kern\mbox{-}bias}}
\def\scrD{\mathscr{D}}
\def\tr{\mathrm{nuis}}
\def\cp{\mathfrak{c}}
\def\fM{\mathfrak{M}}
\def\mytitle{New $\sqrt{n}$-consistent, numerically stable higher-order influence function estimators}
\begin{document}

    \title{\mytitle}

    \author{Lin Liu\thanks{Correspondence: \href{linliu@sjtu.edu.cn}{linliu@sjtu.edu.cn}. The authors would like to thank \href{https://gaofn.xyz/}{Fengnan Gao}, \href{https://zhenyu-liao.github.io/}{Zhenyu Liao}, \href{https://scholar.harvard.edu/rajarshi/home}{Rajarshi Mukherjee}, \href{https://www.hsph.harvard.edu/james-robins/}{Jamie Robins}, and \href{https://sites.google.com/view/zheng-zhang}{Zheng Zhang} for invaluable discussions on this paper. The authors gratefully acknowledges funding support by NSFC Grant No.12101397 and No.12090024, Shanghai Municipal Science and Technology Grant No.2021SHZDZX0102, Shanghai Science and Technology Commission Grant No.21JC1402900, Shanghai Natural Science Foundation Grant No.21ZR1431000.}}\affil[2]{Institute of Natural Sciences, MOE-LSC, School of Mathematical Sciences, CMA-Shanghai, SJTU-Yale Joint Center for Biostatistics and Data Science, Shanghai Jiao Tong University and Shanghai Artificial Intelligence Laboratory, Shanghai, China}
    \author{Chang Li}\affil[1]{Department of Statistics, University of Virginia, Charlottesville, VA, USA}
    
    \date{\today}
    
    \maketitle
    
    \begin{abstract}
    Higher-Order Influence Functions (HOIFs) provide a unified theory for constructing rate-optimal estimators for a large class of low-dimensional (smooth) statistical functionals/parameters (and sometimes even infinite-dimensional functions) that arise in substantive fields including epidemiology, economics, and the social sciences. Since the introduction of HOIFs by \citet{robins2008higher} or \citet{robins2016technical}\footnote{\citet{robins2016technical} is the complete version of \citet{robins2008higher}, including more results and proofs. We therefore only refer to \citet{robins2016technical} in the sequel.}, they have been viewed mostly as a theoretical benchmark rather than a useful tool for statistical practice. Works aimed to flip the script are scant, but a few recent papers \citet{liu2017semiparametric, liu2021assumption} make some partial progress. In this paper, we take a fresh attempt at achieving this goal by constructing new, numerically stable HOIF estimators (or sHOIF estimators for short with ``s'' standing for ``stable'') with provable statistical and computational guarantees. This new class of sHOIF estimators (up to the 2nd order) was foreshadowed in synthetic experiments conducted by \citet{liu2020nearly}.
    \end{abstract}
    
    {\footnotesize \textbf{Keywords:} Causal Inference, Functional Estimation, Higher-Order Influence Functions, Semiparametric Theory, Combinatorics}

    \newpage

    \section{Introduction}
    \label{sec:intro}
    
    {\it Higher-Order Influence Functions} (HOIFs) \citep{robins2016technical} are higher-order generalizations of the first-order influence functions (IFs), a staple in semiparametric statistical theory \citep{newey1990semiparametric, bickel1998efficient, van2002part}. HOIFs are a powerful and unified approach to constructing minimax rate-optimal estimators for a class of statistical functionals/parameters (and sometimes even functions; see \citet{kennedy2022minimax}) that arise in (bio)statistics, epidemiology, economics, and the social sciences. HOIF estimators originally proposed in \citet{robins2016technical, robins2017minimax}\footnote{See \citet{robins2022corrigenda} for corrections of the proofs in \citet{robins2017minimax}.} remain the only known minimax rate-optimal estimators for statistical functionals/parameters with substantive interests in the above disciplines, including the Average Treatment Effect (ATE) under the strong ignorability assumption\footnote{In \citet{liu2021assumption}, we derived the HOIFs for the ATE functional even when the strong ignorability assumption fails to hold, provided that we have access to valid proxies for both the treatment and outcome, following a series of works on proximal causal learning \citep{tchetgen2020introduction}.} and the expected conditional covariance of two random variables $A$ and $Y$ given a third random variable $X$, even after highly active research by the statistics and econometrics communities in recent years \citep{newey2018cross, kennedy2020optimal, hirshberg2021augmented, yu2020treatment}. More recent works \citep{kennedy2022minimax, bonvini2022fast} also initiated the application of the HOIF machinery to the minimax optimal estimation of Conditional Average Treatment Effect (CATE) function or dose response curves. Their results lay important theoretical foundation for individualized decision making problems, e.g. personalized medicine. This is the first instance when HOIF estimators are also shown to be effective, at least in theory, for function estimation problems, or more precisely, ``hybrid function and functional estimation problems''. Similar idea has also been applied to dose-response curve estimation \citep{bonvini2022fast}. For an introductory level review of HOIFs, we refer the interested readers to \citet{van2014higher} and Section 1 of \citet{liu2020rejoinder}. A relatively more technical review of HOIFs is delegated to Section \ref{sec:review}.
    
    Over the past decade, Robins and colleagues initiated the research program of establishing theoretical foundations for HOIFs and estimators based on HOIFs \citep{robins2004optimal, van2014higher, robins2016technical, robins2017minimax, liu2017semiparametric} for a class of statistical functionals/parameters recently characterized in \citet{rotnitzky2021characterization}, which are heretofore termed as {\it Doubly Robust Functionals} (DRF) in this paper. We adopt this terminology to reflect the fact that their nonparametric first-order IFs give rise to doubly robust estimators \citep{scharfstein1999adjusting, robins2001comments, chernozhukov2018double}. This class of DRFs subsumes the class of functionals studied in \cite{robins2016technical} and \cite{chernozhukov2018riesz}. Under the standard \Holder{}-regularity assumptions on the nuisance parameters (abbreviated as \Holder{} nuisance models), \citet{robins2017minimax} constructed minimax optimal but non-adaptive HOIF estimators for a sub-class of DRFs. \citet{liu2021adaptive} constructed adaptive second-order IF estimators for DRFs using the celebrated Lepski\v{i}'s adaptation scheme \citep{lepskii1991problem}, within a strict submodel of the \Holder{} nuisance models. But both estimators require estimating the density of the potentially high-dimensional covariates $X$, even in $\sqrt{n}$-estimable regimes. When the dimension $d$ of the covariates is only moderately large (e.g. $d = 10$), nonparametric density estimation is already a daunting computational and statistical task.
    
    To overcome the above issues, \citet{liu2017semiparametric} introduced {\it empirical} HOIF (eHOIF for short) estimators that obviate multi-dimensional density estimation by inverting the sample/empirical Gram matrix of vector-valued basis transformation of the covariates computed using a separate sample independent of the sample used to construct the estimator of the DRF. This sample-splitting strategy is adopted mainly for simplifying the mathematical analysis, leading to rather straightforward analysis of the statistical properties of the eHOIF estimators. In particular, the eHOIF estimators are still the only class of estimators that achieves $\sqrt{n}$-consistency and semiparametric efficiency for DRFs under the minimal \Holder{}-regularity assumptions \citep{robins2009semiparametric}. These nice statistical properties of the eHOIF estimators also motivate the development of a class of assumption-lean hypothesis tests statistic that is designed to falsify if the standard $(1 - \alpha) \times 100\%$ Wald confidence interval of the DRF has the claimed coverage probability \citep{liu2020nearly, liu2021assumption}. At this point, astute readers must wonder why we need a new class of empirical HOIF estimators at all, which is what this article is all about.
    
    \subsection{Motivation and main contributions}
    \label{sec:novelty}
    
    Despite the effort in \citet{liu2017semiparametric}, from our past experience of using eHOIF estimators in practice \citep{liu2017semiparametric, liu2020nearly, liu2021assumption, wanis2023machine}, several singular issues of their finite-sample performance were unveiled by large-scale simulation experiments\footnote{For interested readers, these simulation experiments have also been used to expose the gap between the (nonparametric) statistical theory deep neural networks (DNNs) and their practice in \citet{xu2022deepmed}. One can access computer codes of generating such simulations \href{https://github.com/siqixu/DeepMed}{here}.}:
    
    \begin{enumerate}[label = (\roman*)]
    \item \underline{Numerical instability}: In \citet{liu2017semiparametric}, although eHOIF estimators exhibit better finite-sample performance than the original HOIF estimators in \citet{robins2017minimax}, the simulations were restricted to very low condition number $k / n$: e.g. $k \approx 1,000$ and $n \approx 10,000$. In the simulation studies of \citet{liu2020nearly}, when $k$ gets near $n$, eHOIF estimators blow up numerically (see Section S3.1 of \citet{liu2020nearly}) already at order two. What is more striking is that the eHOIF estimators at higher orders, though supposed to be correcting the bias, can only exacerbate the numeric blow-up.
    \item \underline{Non-monotone bias reduction}: Theoretical results in \citet{liu2017semiparametric} hint that increasing the orders of the estimator should {\it in principle} reduce the bias. However, we found that this is not usually the case for eHOIF estimators in practice (e.g. see Section 5 of \citep{liu2021assumption}). Interestingly, sHOIF estimators do not seem to suffer from this problem in simulations, elevating the theoretical results from {\it mere principles} closer to {\it empirical facts}; see \citet{liu2020nearly} or \citet{wanis2023machine} for simulations at orders 2 or 3.
    \end{enumerate}
    
    Our contributions are three-fold.
    \begin{itemize}
    \item \underline{Methodology and practical relevance}: This article proposes a new class of numerically stable sHOIF estimators for DRFs, that overcomes the above two major limitations of eHOIF estimators. The stable Second-Order IF (SOIF) estimators first appeared in the simulation studies of \citet{liu2020nearly}, but their statistical properties remain elusive.
    \item \underline{Theory and the proof strategy}: Obtaining a deeper theoretical underpinning of this phenomenon mandates meticulous calculations rather than crude upper bounds. This is the critical technical innovation vis-\`{a}-vis other HOIF-related works. In particular, we intensively use the following proof techniques: leave-out analysis, matrix-valued Taylor expansion, and combinatorial calculations (i.e. corollaries of the binomial identity). The proof strategy developed in this paper may be of independent interest.
    \item \underline{Extensions of sHOIFs beyond ATE settings}: We also generalize sHOIF estimators to all the DRFs, allowing us to handle more structural parameters in the current causal inference (or econometrics) literature.
    \end{itemize}
    
    \subsection{Notation}
    \label{sec:notation}
    
    Before proceeding, we gather some frequently used notation throughout the paper. We denote the observed data random vector as $O \in \calO$, where $\calO$ is its corresponding sample space. Let $\sfzbar_{k} \coloneqq (z_{1}, \cdots, z_{k})^{\top}$ denote a collection of $k$ different functions, each of which has input domain $\calX$. Fix some $\theta' \in \Theta$. $\bbE_{\theta'}$, $\var_{\theta'}$, and $\cov_{\theta'}$ are, respectively, the expectation, variance, and covariance operators under the probability law $\bbP_{\theta'}$. For any measurable function $h: \calX \rightarrow \bbR$, let $\Vert h \Vert_{\infty} \coloneqq \mathrm{ess } \sup_{x \in \calX} h (x)$ and $\Vert h \Vert_{\theta', p} \coloneqq \left\{ \bbE_{\theta'} \left[ h (X)^{p} \right] \right\}^{1 / p}$ for any $p \geq 1$. We adopt standard (stochastic) asymptotic notation $\lesssim$, $\gtrsim$, $\asymp$, $\gg$, $\ll$, $o (\cdot)$, $\omega (\cdot)$, $O (\cdot)$, $\Omega (\cdot)$, $o_{\bbP_{\theta'}} (\cdot)$, $O_{\bbP_{\theta'}} (\cdot)$. For any real-valued vector $\bar{v}$ and any $q \in \bbR$, let $v^{q}$ be the element-wise $q$-th power of $v$. 
    
    Furthermore, define $\Sigma_{\theta'} \coloneqq \bbE_{\theta'} [Q]$, where $Q \coloneqq A \sfzbar_{k} (X) \sfzbar_{k} (X)^{\top}$, as the ($A$-weighted) population Gram matrix of $\sfzbar_{k} (X)$, till Section \ref{sec:drf}, in which we generalize all our results from $\psi (\theta) \coloneqq \bbE_{\theta} [Y (a = 1)]$\footnote{We use the potential outcome notation without introducing it, which will not affect the understanding of the main theme of this work.}, the mean of outcome $Y$ in the treated group under strong ignorability, to all members of the DRFs. Similarly, define $\hat{\Sigma} \coloneqq \bbP_{n} [Q_{k}] \equiv n^{-1} \sum_{i = 1}^{n} Q_{i}$ as the ($A$-weighted) sample Gram matrix of $\sfzbar_{k} (X)$, again till Section \ref{sec:drf}. Here $\bbP_{n} [\cdot]$ denotes the sample mean operator. 
    To further lighten the notation, we let $Q_{I} \coloneqq \sum_{i \in I} Q_{i}$ for any multi-index set $I \subseteq [n]$. For convenience, we also denote multi-index set $\{i_{1}, i_{2}, \cdots, i_{j}\} \subseteq [n]$ as $\bar{i}_{j}$ for $j \leq n$. $\Omega_{\theta'} \equiv \Sigma_{\theta'}^{-1}$ and $\hat{\Omega} \equiv \hat{\Sigma}^{-1}$, when they exist, are respectively the inverse of the population and sample Gram matrices. The kernels constructed from $\sfzbar_{k}$ are denoted as $K_{\theta', k} (x, x') \coloneqq \sfzbar_{k} (x)^{\top} \Omega_{\theta'} \sfzbar_{k} (x')$ and $\hat{K}_{k} (x, x') = \sfzbar_{k} (x)^{\top} \hat{\Omega} \sfzbar_{k} (x')$. Given a set of functions $\sfvbar_{k}: \calX \rightarrow \bbR^{k}$ and any $L_{2}$ function $h: \calO \rightarrow \bbR$, $\Pi [h | \sfvbar_{k}]$ denotes the linear projection operator of projecting $h$ onto the linear span of $\sfvbar_{k}$: formally,
    $$
    \Pi [h | \sfvbar_{k}] (x) \coloneqq \sfvbar_{k} (x)^{\top} \bbE [\sfvbar_{k} (X) \sfvbar_{k} (X)^{\top}]^{-1} \bbE [\sfvbar_{k} (X) h (O)].
    $$
    
    We use $\bbP_{\theta}$ to denote the true data generating law, unless stated otherwise. When the reference measure is the true law $\bbP_{\theta}$, we often drop the dependence on $\theta$: for example, we write $\Vert h \Vert_{p} \equiv \Vert h \Vert_{\theta, p}$, $\Sigma \equiv \Sigma_{\theta}$, $\Omega \equiv \Omega_{\theta}$ and $\bbP$, $\bbE$, $\var$, $\cov$ correspond to $\bbP_{\theta}$, $\bbE_{\theta}$, $\var_{\theta}$, $\cov_{\theta}$. Note that $\Omega$ should not be confused with the asymptotic notation $\Omega (\cdot)$ and this will be clear from the context. A statistic is said to be ``oracle'' whenever it depends on some part(s) of the unknown {\it true} data generating law $\bbP_{\theta}$ (such as $\Omega$); otherwise it is said to be ``feasible''. We also introduce $\Diag$ as the operator of extracting the diagonal elements of a matrix.
    
    Finally, let $\bbU_{n, m} [\cdot]$ denote the $m$-th order $U$-statistic operator: for any function $h: \bbR^{m} \rightarrow \bbR$
    \begin{align*}
        \bbU_{n, m} [h (O_{1}, \cdots, O_{m})] \coloneqq \frac{(n - m)!}{n!} \sum_{1 \leq i_{1} \neq \cdots \neq i_{m} \leq n} h (O_{i_{1}}, \cdots, O_{i_{m}}).
    \end{align*}
    When $m = 1$, $\bbU_{n, m} [\cdot]$ reduces to the sample mean operator $\bbP_{n} [\cdot]$. Similarly, let $\bbV_{n, m}$ be the corresponding $V$-statistic operator\footnote{Here we use the scaling $\frac{(n - m)!}{n!}$ instead of the more conventional $\frac{1}{n^{m}}$ for notational convenience.}:
    \begin{align*}
	\bbV_{n, m} [h (O_{1}, \cdots, O_{m})] \coloneqq \frac{(n - m)!}{n!} \sum_{i_{1} = 1}^{n} \cdots \sum_{i_{m} = 1}^{n} h (O_{i_{1}}, \cdots, O_{i_{m}}).
    \end{align*}
    Later in the paper, for $m \geq 2$, we will define ``oracle'' $m$-th order influence function estimators constructed using the dictionary $\sfzbar_{k}$, denoted as $\hat{\IIFF}_{m, m, k} (\Omega) \equiv \hat{\IIFF}_{m, m, k} = \bbU_{n, m} [\hat{\IF}_{m, m, k, \bar{i}_{m}}]$ with $U$-statistic kernel $\IF_{m, m, k, \bar{i}_{m}} \equiv \hat{\IF}_{m, m, k, \bar{i}_{m}} (\Omega)$. Its stable feasible version is denoted by $\hat{\IIFF}_{m, m, k} (\hat{\Omega})$ with the corresponding kernel $\hat{\IF}_{m, m, k, \bar{i}_{m}} (\hat{\Omega})$. 
    
    
    \subsection{The setup and a review of the theory of HOIFs}
    \label{sec:review}
    
    With the notation just introduced, we are poised to state the problem setup and briefly review the theory of HOIFs relevant for this paper, in particular the theory of eHOIFs. 

    Suppose that we are given $N$ i.i.d. observations $\{O_{i}\}_{i = 1}^{N} \sim \bbP_{\theta}$, where $\theta \in \Theta$ is the so-called nuisance parameter and $\Theta$ is its underlying parameter space. Let $\calP \coloneqq \left\{ \bbP_{\theta}: \theta \in \Theta \right\}$ be the space of data generating probability measures. Our primary interest is to estimate and draw statistical inference on a smooth statistical functional $\psi (\theta): \rightarrow \bbR$, in the sense of \citet{van1991differentiable}. We restrict $\psi (\theta)$ to be the DRFs defined in \citet{rotnitzky2021characterization}. As mentioned, our running example is $\psi (\theta) = \bbE_{\theta} [Y (a = 1)]$ the mean of an outcome $Y$ in the treated group $A = 1$. Here the observed data specializes to $O = (X, A, Y)$: respectively the $d$-dimensional covariates belonging to a compact subset $\calX \equiv [-B, B]^{d}$ of $\bbR^{d}$, the binary treatment assignment, and the bounded outcome variable. Under unconfoundedness assumption (that can be relaxed by using the HOIFs of $\psi (\theta)$ under the proximal causal inference setting \citep{liu2021assumption}), $\psi (\theta)$ can be identified by either of the two statistical functionals of the observed data distribution:
    \begin{equation}
    \label{fnl}
    \psi (\theta) \equiv \bbE \left[ A a (X) Y \right] \equiv \bbE [b (X)]
    \end{equation}
    where $a (x) \coloneqq \{\bbE [A | X = x]\}^{-1}$ and $b (x) \coloneqq \bbE [Y | X = x, A = 1]$ except Section \ref{sec:drf}. For this functional $\psi (\theta)$, the nuisance parameter is $\theta \equiv (a, b, g)$ where $g (x)$ is the probability density/mass function of the covariates $X$ conditional on $A = 1$. Hence the nuisance parameter space $\Theta = \calA \times \calB \times \calG$, where $\calA, \calB, \calG$ are, respectively, the space where $a, b, g$ lie. We further divide the whole $N$ data points into two parts: one with sample size $n$, called the estimation sample, and the other with sample size $N - n$, called the nuisance sample used to estimate the nuisance parameter $\theta$. Throughout this paper, we condition on the nuisance sample data by treating it or any quantity computed from it as fixed.

    For a smooth statistical functional $\psi (\theta): \Theta \rightarrow \bbR$ in the sense of \citet{van1991differentiable}, its first-order influence function $\IIFF_{1} (\theta)$ is a mean-zero first-order $U$-statistic satisfying the following functional equation
    \begin{align*}
    \left. \frac{\diff \psi (\theta_{t})}{\diff t} \right\vert_{t = 0} = \bbE \left[ \IIFF_{1} (\theta) \cdot \bbS_{1} \right]
    \end{align*}
    where $\bbP_{\theta_{t}}$ is any parametric submodels in $\{\bbP_{\theta}, \theta \in \Theta\}$, such that when $t = 0$, $\bbP_{\theta_{t}} \equiv \bbP_{\theta}$, the true data generating law, and $\bbS_{1}$ is its first-order score vector, as defined in \citet{waterman1996projected}; also see \citet{robins2016technical}. Here $\IIFF_{1} (\theta)$ has the following form \citep{robins1994estimation}: 
    \begin{equation}
    \label{if1}
    \IIFF_{1} (\theta) \equiv \frac{1}{n} \sum_{i = 1}^{n} \IF_{1, i} (\theta), \text{ where } \IF_{1} (\theta) = A a (X) (Y - b (X)) + b (X) - \psi (\theta).
    \end{equation}
    Typically, classical semiparametric theory \citep{newey1990semiparametric, bickel1998efficient} constructs semiparametric efficient first-order estimators $\hat{\psi}_{1}$ of $\psi (\theta)$ based on its first-order influence function follows:
    $$
    \hat{\psi}_{1} = \frac{1}{n} \sum_{i = 1}^{n} A_{i} \hat{a} (X_{i}) (Y_{i} - \hat{b} (X_{i})) + \hat{b} (X_{i})
    $$
    where $\hat{a}, \hat{b}$ are nuisance parameter estimates computed from the nuisance sample. In particular, $\hat{\psi}_{1}$ has bias
    \begin{equation}
    \label{first-order bias}
    \begin{split}
    \Bias (\hat{\psi}_{1}) & = \bbE [\hat{\psi}_{1} - \psi (\theta)] = \bbE [\IF_{1} (\hat{\theta}) - \IF_{1} (\theta)] \\
    & = \bbE \left[ \left( \frac{\hat{a} (X)}{a (X)} - 1 \right) (b (X) - \hat{b} (X)) \right].
    \end{split}
    \end{equation}
    Formally, $\Bias (\hat{\psi}_{1})$ is a {\it product of two nuisance estimation errors}\footnote{\citet{rotnitzky2021characterization} actually define the general class of statistical functionals that permit doubly-robust estimators based on this second-order bias property; see Section \ref{sec:drf}.}, and hence {\it doubly-robust} \citep{scharfstein1999adjusting}.
    
    Despite being doubly-robust, the veracity of inference based on first-order estimators like $\hat{\psi}_{1}$ may nonetheless be questionable when the nuisance parameter $\theta$ is of high complexity: e.g. functions with low smoothness or without sparsity. For example, when $a, b$ belong to \Holder{} functions with smoothness $s_{a}, s_{b}$ and $g$ arbitrarily complex, by far no first-order estimators are known to be $\sqrt{n}$-consistency for estimating $\psi (\theta)$ throughout the entire range 
    \begin{equation}
    \label{minimal}
    \{(s_{a}, s_{b}): (s_{a} + s_{b}) / 2 \geq d / 4\}
    \end{equation}
    but the eHOIF estimators of \citet{liu2017semiparametric} or the original HOIF estimators of \citet{robins2016technical} if additionally assuming $g$ to be \Holder{} with smoothness $s_{g} > 0$. In fact, \citet{robins2009semiparametric} also showed that \eqref{minimal} is the minimal condition for the existence of $\sqrt{n}$-consistent estimators of $\psi (\theta)$ under the \Holder{} nuisance modeling assumption. Outside \eqref{minimal}, $\psi (\theta)$ is non $\sqrt{n}$-estimable and the only known estimator with the optimal rate of convergence in minimax sense is again the HOIF estimator \citep{robins2016technical, robins2017minimax, robins2022corrigenda}. When restricting to highly smooth $g$, \citet{liu2021adaptive} construct minimax optimal and adaptive estimator of $\psi (\theta)$ by combining the HOIF estimators with the celebrated Lepskii's adaptation scheme \citep{lepskii1991problem}.

    This article is about the $\sqrt{n}$-estimable regime \eqref{minimal}, so we will focus our attention on the eHOIF estimators. First, we choose a set of $k$-dimensional functions $\sfzbar_{k} \equiv (z_{1}, \cdots, z_{k})^{\top}: \calX \rightarrow \bbR^{k}$ satisfying certain regularity conditions to be given later in Section \ref{sec:soif}. The Second-Order Influence Function (SOIF) estimator of $\psi (\theta)$ is the following second-order $U$-statistic:
    \begin{equation}
    \label{soif}
    \begin{split}
    & \hat{\psi}_{2, k} (\Omega) \coloneqq \hat{\psi}_{1} + \hat{\IIFF}_{2, 2, k} \text{ where } \hat{\IIFF}_{2, 2, k} \equiv \hat{\IIFF}_{2, 2, k} (\Omega) \coloneqq \bbU_{n, 2} \left[ \IF_{2, 2, k; 1, 2} \right]
    \end{split}
    \end{equation}
    and
    \begin{align*}
    \IF_{2, 2, k; 1, 2} & \equiv \IF_{2, 2, k; 1, 2} (\Omega) \coloneqq \left( A_{1} \hat{a} (X_{1}) - 1 \right) \sfzbar_{k} (X_{1})^{\top} \Omega \sfzbar_{k} (X_{2}) A_{2} (Y_{2} - \hat{b} (X_{2})) \\
    & \equiv \left( A_{1} \hat{a} (X_{1}) - 1 \right) K_{k} (X_{1}, X_{2}) A_{2} (Y_{2} - \hat{b} (X_{2})).
    \end{align*}
    Based on the definition of HOIFs \citep{robins2016technical}, $- \hat{\IIFF}_{2, 2, k}$ is in fact the SOIF of $\Bias (\hat{\psi}_{1})$\footnote{The difference in the signs in $\hat{\IIFF}_{2, 2, k}$ between here and \citet{robins2016technical} is non-essential.}. A more intuitively appealing explanation goes as follows: $- \hat{\IIFF}_{2, 2, k}$ is an unbiased estimator of the following quantity:
    \begin{equation}
    \label{bias_k}
    \begin{split}
    \Bias_{k} (\hat{\psi}_{1}) & = \bbE \left[ \left( \frac{\hat{a} (X)}{a (X)} - 1 \right) \sfzbar_{k} (X)^{\top} \right] \Omega \bbE \left[ A \sfzbar_{k} (X) (b (X) - \hat{b} (X)) \right] \\
    & = \bbE \left[ \left( \frac{\hat{a} (X_{1})}{a (X_{1})} - 1 \right) K_{k} (X_{1}, X_{2}) A_{2} (b (X_{2}) - \hat{b} (X_{2})) \right]
    \end{split}
    \end{equation}
    which is simply replacing the estimation errors $\hat{a} / a - 1$ and $b - \hat{b}$ in \eqref{first-order bias} by
    $$
    \Pi \left[ \left. \frac{\hat{a}}{a} - 1 \right\vert \sfzbar_{k} \right] \text{ and } \Pi \left[ \left. b - \hat{b} \right\vert A \sfzbar_{k} \right].
    $$
    Hence $\hat{\IIFF}_{2, 2, k}$ can be interpreted as a bias correction term that {\it partially} debiases $\Bias (\hat{\psi}_{1})$.

    However, evaluating $\Omega$ in practice relies on the knowledge of $g$, which is generally unknown to the analyst. The initial attempt by \citet{robins2016technical} and \citet{robins2017minimax} was to estimate $g$ from the nuisance sample by $\hat{g}$, leading to statistical properties affected by $g - \hat{g}$ and thus complexity-reducing assumptions on $\calG \ni g$. To completely resolve this reliance, \citet{liu2017semiparametric} choose to estimate $\Omega$ by its empirical analogue using the {\it nuisance sample}, denoted as $\hat{\Omega}_{\tr} = \hat{\Sigma}_{\tr}^{-1}$. The resulting estimated kernel is denoted as $\hat{K}_{k}^{\tr} (x, x')$, similar to $\hat{K}_{k}$ defined in Section \ref{sec:notation}. Then the empirical SOIF (eSOIF) estimator $\hat{\IIFF}_{2, 2, k} (\hat{\Omega}_{\tr})$ of $\Bias_{k} (\hat{\psi}_{1})$ is
    \begin{align*}
    \hat{\IIFF}_{2, 2, k} (\hat{\Omega}_{\tr}) \equiv \bbU_{n, 2} \left[ \IF_{2, 2, k; 1, 2} (\hat{\Omega}_{\tr}) \right],
    \end{align*}
    which, unlike $\hat{\IIFF}_{2, 2, k}$, incurs a kernel estimation bias
    \begin{align*}
    \bbE [\hat{\IIFF}_{2, 2, k} (\hat{\Omega}_{\tr}) - \hat{\IIFF}_{2, 2, k}] & = \bbE [(A_{1} \hat{a} (X_{1}) - 1) \sfzbar_{k} (X_{1})^{\top}] (\hat{\Omega}_{\tr} - \bbI) \bbE [\sfzbar_{k} (X_{2}) A_{2} (Y_{2} - \hat{b} (X_{2}))] \\
    & = \bbE [(A_{1} \hat{a} (X_{1}) - 1) (\hat{K}_{k}^{\tr} (X_{1}, X_{2}) - K_{k} (X_{1}, X_{2})) A_{2} (Y_{2} - \hat{b} (X_{2}))],
    \end{align*}
    shown to be of order at most $\sqrt{k \log k / n}$ in \citet{liu2017semiparametric}. To further reduce the kernel estimation bias, one can consider the following $m$-th order eHOIF estimator, which is an $m$-th order $U$-statistic:
    \begin{align*}
    & \hat{\IIFF}_{(2, 2) \rightarrow (m, m), k} (\hat{\Omega}_{\tr}) \coloneqq \sum_{j = 2}^{m} \hat{\IIFF}_{j, j, k} (\hat{\Omega}_{\tr}) \\
    \text{where } & \hat{\IIFF}_{j, j, k} (\hat{\Omega}_{\tr}) \coloneqq \bbU_{n, j} \left[ \hat{\IF}_{j, j, k; 1, \cdots, j} (\hat{\Omega}_{\tr}) \right]
    \end{align*}
    and
    \begin{align*}
    \hat{\IF}_{j, j, k; 1, \cdots, j} (\hat{\Omega}_{\tr}) = (-1)^{j} (A_{1} \hat{a} (X_{1}) - 1) \sfzbar_{k} (X_{1})^{\top} \hat{\Omega}_{\tr} \prod_{s = 3}^{j} \left\{ (Q_{s} - \hat{\Sigma}_{\tr}) \hat{\Omega}_{\tr} \right\} \sfzbar_{k} (X_{2}) A_{2} (Y_{2} - \hat{b} (X_{2})).
    \end{align*}
    \citet{liu2017semiparametric} showed that the kernel estimation bias of $\hat{\IIFF}_{(2, 2) \rightarrow (m, m), k} (\hat{\Omega}_{\tr})$ is of order at most $(k \log k / n)^{m / 2}$ and variance of order at most $1 / n \vee k / n^{2}$. Hence by taking $m \asymp \sqrt{\log n}$ and $k \asymp n / \log^{c} n$ for some absolute constant $c > 0$, we could estimate $\Bias_{k} (\hat{\psi}_{1})$ with essentially no bias without inflating the order of the variance of $\hat{\psi}_{1}$. Furthermore, under \Holder{} nuisance models on $\calA \times \calB$, \citet{liu2017semiparametric} demonstrate that the sHOIF estimator $\hat{\psi}_{1} + \hat{\IIFF}_{(2, 2) \rightarrow (m, m), k} (\hat{\Omega}_{\tr})$, with said choices of $m$ and $k$, is $\sqrt{n}$-consistent in \eqref{minimal} and semiparametric efficient in the interior of \eqref{minimal} under some additional mild assumptions. In this paper, the sHOIF estimators to be introduced in Section \ref{sec:shoif} simply replace $\hat{\Sigma}_{\tr}$ and $\hat{\Omega}_{\tr}$ in the eHOIF estimators by $\hat{\Sigma}$ and $\hat{\Omega}$, the empirical analogues of $\Sigma$ and $\Omega$ computed from the estimation sample. One can easily see that, due to the correlation between $\hat{\Omega}$ and the estimation sample, the analysis of the statistical properties of sHOIF estimators becomes significantly more challenging.
    
    \subsection{Plan} \label{sec:outline}
    The rest of the paper is organized as follows. Section \ref{sec:soif} defines the stable Second-Order IF (sSOIF) estimators and studies their statistical and numerical properties as a warm-up. Section \ref{sec:shoif} presents the full version of sHOIF estimators, together with their statistical, numerical, and computational properties. We then apply sHOIF estimators and their statistical properties to two concrete problems Section \ref{sec:applications}: one is to show that sHOIF estimators for $\psi (\theta)$ achieve semiparametric efficiency under the minimal conditions within the classical \Holder{} nuisance models; the other is to use sHOIF estimators to test if the nominal $(1 - \alpha) \times 100\%$ Wald confidence interval centered at the first-order DML estimator has the claimed coverage, a novel assumption-lean statistical procedure recently proposed in \citet{liu2020nearly}, and further developed in \citet{liu2021assumption}. To demonstrate the generality of sHOIF estimators, Section \ref{sec:extensions} extends results heretofore in several directions. 
    Finally, Section \ref{sec:discussion} concludes the paper and discusses several open problems and possible future directions. Appendix contains technical details that provide insights on the proof strategy. The remaining technical details are deferred to Supplementary Materials \citep{shoif_supp}.
    
    \section{Assumptions and warm-up: Stable second-order influence function estimators}
    \label{sec:soif}
    
    In this section, we disclose the main assumptions, accompanied with an illustration of the main results using the stable second-order influence function (sSOIF) estimator $\hat{\IIFF}_{2, 2, k} (\hat{\Omega})$ as a warm-up of what follows. 
    
    The assumptions below are imposed throughout the paper unless stated otherwise.
    
    \begin{assumption}[Conditions on initial first-step nuisance parameter estimates.]
    \label{cond:nuis}
    Nuisance parameter estimators $\hat{a}$ and $\hat{b}$ are attained from a separate independent frozen nuisance sample. For simplicity, we assume this sample to also have size $n$. $\hat{a}$ and $\hat{b}$ further satisfy the following properties until otherwise noticed:
    \begin{enumerate}[label = (\roman*)]
        \item $\Vert \hat{a} - a \Vert_{2} = o (1)$ and $\Vert \hat{b} - b \Vert_{2} = o (1)$, i.e. both nuisance parameter estimators are $L_{2}$-consistent;
        \item $\Vert a \Vert_{\infty}$, $\Vert b \Vert_{\infty}$, $\Vert \hat{a} \Vert_{\infty}$ and $\Vert \hat{b} \Vert_{\infty}$ are bounded by some absolute constant $B > 0$.
        \item In the case of $\psi (\theta) = \bbE [Y (1)]$ under strong ignorability, we additionally need $1 / a$ and $1 / \hat{a}$ to be bounded between $(c, 1 - c)$ for some absolute constant $0 < c < 0.5$.
    \end{enumerate}
    \end{assumption}
    
    \begin{assumption}[Conditions on $\sfzbar_{k}$ related quantities.]
    \label{cond:b}
    The following are assumed on the basis functions $\sfzbar_{k}$ and the corresponding (inverse) Gram matrices $\Sigma, \hat{\Sigma}, \Omega, \hat{\Omega}$ and projection kernels $K_{k}$ and $\hat{K}_{k}$:
    \begin{enumerate}[label = (\roman*)]
        \item There exists an absolute constant $B > 0$ such that $\sup_{x \in \calX} K_{k} (x, x) \leq B k$ and $\sup_{x \in \calX} \hat{K}_{k} (x, x) \leq B k$;
        \item Both $\Sigma$ and $\hat{\Sigma}$ have bounded spectra;
        \item The projection kernel satisfies the following $L_{\infty}$-stability condition: for any measurable function $h: \calX \rightarrow \bbR$, 
        \begin{equation}
            \label{l_inf_stability}
            \left\Vert \Pi \left[ h | \sfzbar_{k} \right] (\cdot) \right\Vert_{\infty} \lesssim \Vert h \Vert_{\infty}.
        \end{equation}
    \end{enumerate}
    \end{assumption}
    
    \begin{remark}[Comments on Assumptions \ref{cond:nuis} and \ref{cond:b}] \leavevmode
    \begin{enumerate}[label = (\roman*)]
    \item Given Assumption \ref{cond:b}(ii), there is no loss of generality by assuming $\Sigma \equiv \Omega \equiv \bbI$, the identity matrix of the same size as $\Sigma$ or $\Omega$. We make such a simplification throughout the paper, unless stated otherwise.
    
    \item The assumptions on the nuisance parameters and their estimators in Assumption \ref{cond:nuis} are quite mild. In particular, we do not assume $\hat{a}$, $\hat{b}$ converge to $a$, $b$ at any algebraic rate in $L_{2}$-norm. In fact, if content with $\sqrt{n}$-consistency instead of semiparametric efficiency, $\Vert \hat{a} - a \Vert_{2} = o (1)$ and $\Vert \hat{b} - b \Vert_{2} = o (1)$ can be even relaxed to $\Vert \hat{a} - a \Vert_{2} = O (1)$ and $\Vert \hat{b} - b \Vert_{2} = O (1)$; see \citet{liu2017semiparametric}.
    
    \item Assumption \ref{cond:b} on the dictionary $\sfzbar_{k}$ also appeared in \citet{robins2017minimax, liu2017semiparametric, liu2020nearly, liu2021assumption}; also see comments in \citet{liu2020rejoinder}. The $L_{\infty}$-stability condition (iii) have been established for Cohen-Daubechies-Vial wavelets, B-splines, and local polynomial partition series \citep{belloni2015some}. It is possible to relax such a condition to a high-probability version, which we decide not to further pursue in this paper.
    \end{enumerate}
    \end{remark}

    The following result on the sSOIF estimator $\hat{\IIFF}_{2, 2, k} (\hat{\Omega})$ is a special case of Theorem \ref{thm:properties} to be revealed in Section \ref{sec:shoif}.
    
    \begin{proposition}[Bias and variance bounds of $\hat{\IIFF}_{2, 2, k} (\hat{\Omega})$.]
    \label{prop:soif}
    Under Assumptions \ref{cond:nuis} -- \ref{cond:b}, with $k = o (n)$, one has the following:
    \begin{enumerate}[label = {\normalfont(\roman*)}]
        \item The kernel estimation bias of $\hat{\IIFF}_{2, 2, k} (\hat{\Omega})$ satisfies
        \begin{equation} \label{EB2}
            \begin{split}
            & \EB_{2, k} (\hat{\psi}_{1}) \coloneqq \bbE \left[ \hat{\IIFF}_{2, 2, k} (\hat{\Omega}) \right] - \Bias_{\theta, k} (\hat{\psi}_{1}) \equiv \bbE \left[ \hat{\IIFF}_{2, 2, k} (\hat{\Omega}) - \hat{\IIFF}_{2, 2, k} \right] \\
            & \lesssim \frac{k}{n} \left\{ \left\Vert \frac{\hat{a} - 1}{a} \right\Vert_{2} \Vert \hat{b} - b \Vert_{2} + \left\Vert \frac{\hat{a} - a}{a} \right\Vert_{2} \Vert \hat{b} - b \Vert_{2} + \left( \left\Vert \frac{\hat{a} - 1}{a} \right\Vert_{2} \left\Vert \hat{b} - b \right\Vert_{\infty} \wedge \left\Vert \frac{\hat{a} - 1}{a} \right\Vert_{\infty} \left\Vert \hat{b} - b \right\Vert_{2} \right) \right\}.
            \end{split}
        \end{equation}
        \item The variance of $\hat{\IIFF}_{2, 2, k} (\hat{\Omega})$ satisfies
        \begin{equation} \label{var2}
            \begin{split}
            \var \left[ \hat{\IIFF}_{2, 2, k} (\hat{\Omega}) \right] \lesssim \frac{1}{n} \left\{ \frac{k}{n} + \left( \left\Vert \frac{\hat{a} - 1}{a} \right\Vert_{2} \left\Vert \hat{b} - b \right\Vert_{\infty} \wedge \left\Vert \frac{\hat{a} - 1}{a} \right\Vert_{\infty} \left\Vert \hat{b} - b \right\Vert_{2} \right) \right\}.
            \end{split}
        \end{equation}
    \end{enumerate}
    \end{proposition}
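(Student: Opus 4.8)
The plan is to analyse the bias and the variance in parallel, in both cases reducing everything to the bilinear form built from the vectors $v_i := (A_i\hat a(X_i)-1)\sfzbar_k(X_i)$ and $w_j := A_j(Y_j-\hat b(X_j))\sfzbar_k(X_j)$: using the normalization $\Sigma \equiv \Omega \equiv \bbI$ permitted by Assumption~\ref{cond:b}(ii), we have $\hat\IIFF_{2,2,k}(\hat\Omega) = \bbU_{n,2}[v_1^\top\hat\Omega w_2]$ and $\hat\IIFF_{2,2,k}(\hat\Omega) - \hat\IIFF_{2,2,k} = \bbU_{n,2}[v_1^\top(\hat\Omega - \bbI)w_2]$. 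Two structural facts drive the argument. First, $\hat\Omega = \hat\Sigma^{-1}$ and $\hat\Sigma - \bbI = \bbP_{n}[Q-\bbI]$ is a mean-zero average of the bounded rank-one matrices $Q_i - \bbI$, which satisfy the collapse identities $v_1^\top Q_1 = (A_1\hat a(X_1)-A_1)K_k(X_1,X_1)\sfzbar_k(X_1)^\top$, $Q_2 w_2 = A_2(Y_2-\hat b(X_2))K_k(X_2,X_2)\sfzbar_k(X_2)$, and $Q_i^2 = K_k(X_i,X_i)Q_i$, with $K_k(X_i,X_i)\le Bk$ by Assumption~\ref{cond:b}(i). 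Second, the resolvent identity $\hat\Omega - \bbI = -(\hat\Sigma-\bbI) + (\hat\Sigma-\bbI)^2\hat\Omega$, to be combined with the leave-out split $\hat\Sigma - \bbI = \tfrac1n(Q_i - \bbI) + \tfrac1n(Q_j-\bbI) + \tfrac{n-2}{n}(\hat\Sigma_{-\{i,j\}}-\bbI)$ that isolates the two ``diagonal'' summands correlated with the active pair $(O_i,O_j)$ from the bulk $\hat\Sigma_{-\{i,j\}}-\bbI$, which is independent of $(O_i,O_j)$ and mean-zero.

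For part~(i), by exchangeability it suffices to bound $\bbE[v_1^\top(\hat\Omega-\bbI)w_2]$. In the linear term $-\bbE[v_1^\top(\hat\Sigma-\bbI)w_2]$ the bulk summand $\hat\Sigma_{-\{1,2\}}-\bbI$ vanishes under expectation (independence plus mean-zero), while the two diagonal summands $\tfrac1n(Q_1-\bbI)$ and $\tfrac1n(Q_2-\bbI)$, after the rank-one collapse, each produce exactly one factor $K_k(\cdot,\cdot)\le Bk$ and, upon conditioning on the covariates, integrands of the form $\tfrac{\hat a-1}{a}$ or $\tfrac{\hat a-a}{a}$ paired against (possibly $A$-weighted) projections of $b - \hat b$; Cauchy--Schwarz together with the $L_2$-contractivity of those projections then gives the first two terms of \eqref{EB2}. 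The quadratic remainder $\bbE[v_1^\top(\hat\Sigma-\bbI)^2\hat\Omega w_2]$ is the crux. One reinserts the leave-out split into each copy of $\hat\Sigma-\bbI$ and iterates the resolvent identity once more on a leave-$\{1,2\}$-out resolvent, so that the dominant matrix in the middle is independent of $(O_1,O_2)$; this bulk is controlled by the matrix-concentration bound $\|\bbE[(\hat\Sigma_{-\{1,2\}}-\bbI)^2]\|_{\op}\lesssim k/n$ --- where the rank-one identity $Q_i^2 = K_k(X_i,X_i)Q_i$ is exactly what keeps $\|\bbE Q_i^2\|_{\op}\lesssim k$ rather than $k^2$ --- together with $\|\hat\Omega\|_{\op} = O(1)$ from Assumption~\ref{cond:b}(ii); the remaining diagonal-heavy cross terms carry two factors of $\tfrac1n$ and at most two factors $K_k\le Bk$, hence are $O(k^2/n^2)$ times nuisance products and are absorbed into $O(k/n)$ times nuisance products. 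In these cross terms one can no longer split every nuisance factor in $L_2$ and must pull one out in $L_\infty$; it is this step, powered by the $L_\infty$-stability \eqref{l_inf_stability}, that produces the third, $\wedge$-shaped term of \eqref{EB2}.

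For part~(ii), write $\hat\IIFF_{2,2,k}(\hat\Omega) = \tfrac{n}{n-1}\,\bar v^\top\hat\Omega\bar w - \tfrac1{n-1}\bbP_{n}[v^\top\hat\Omega w]$ with $\bar v = \bbP_{n}[v]$ and $\bar w = \bbP_{n}[w]$. The diagonal term has variance $\lesssim n^{-3}\bbE[(v^\top\hat\Omega w)^2]\lesssim k^2/n^3 = o(k/n^2)$, since $\|v\|^2 = K_k(X,X)\le Bk$ and $\|\hat\Omega\|_{\op} = O(1)$. For the main term, substitute $\bar v = \mu_v + \delta_v$, $\bar w = \mu_w + \delta_w$ (with $\bbE\|\delta_v\|^2, \bbE\|\delta_w\|^2\lesssim k/n$) and $\hat\Omega = \bbI + (\hat\Omega - \bbI)$ and expand: the deterministic piece $\mu_v^\top\mu_w$ drops from the variance, the fully fluctuating piece $\delta_v^\top\delta_w$ contributes the $k/n^2$ term (a second-moment computation that again costs only $k$, not $k^2$, thanks to the rank-one structure), and the mixed pieces $\mu_v^\top\delta_w$, $\delta_v^\top\mu_w$, and the pieces carrying the factor $\hat\Omega - \bbI\approx -(\hat\Sigma - \bbI)$ are treated by the same H\'ajek/leave-out projection and one $L_\infty$ pull-out as in part~(i), yielding the $\wedge$ nuisance term. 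The main obstacle, pervading both parts, is precisely this remainder and fluctuation bookkeeping: a crude operator-norm bound on $\hat\Omega - \bbI$ (which is only $O_{\bbP}(\sqrt{k/n})$) would manufacture spurious $k^2/n$ and $k^2/n^2$ losses that break the claimed bounds, so one is forced to exploit the rank-one identity $Q_i^2 = K_k(X_i,X_i)Q_i$, the leave-out decoupling, and the $L_\infty$-stability in tandem --- the ``meticulous calculation rather than crude upper bound'' around which the paper's proof strategy is organized; once this is in place the remaining steps are routine.
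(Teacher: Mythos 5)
Your plan follows essentially the same route as the paper for the bias: the paper's expansion is exactly your resolvent identity iterated, written as $\hat{\Omega} - \bbI = \sum_{j=1}^{J-1}(\bbI - \hat{\Sigma})^{j} + (\bbI-\hat{\Sigma})^{J}\hat{\Omega}$ with the leave-out split $\bbI - \hat{\Sigma} = (\bbI - \hat{\Sigma}^{\dag}) - Q_{1,2}/n$, and the $j=1$ and $j=2$ terms are analyzed exactly as you describe (the $\wedge$-shaped term coming from the non-commutative cross product $Q_{2}Q_{1}$ inside $Q_{1,2}^{2}$, controlled via the $L_{\infty}$-stability). The one place your plan is materially thinner than the paper's is the truncation depth: you stop the Neumann expansion at order two and ``iterate once more,'' but the tail $\bbE[v_{1}^{\top}(\bbI-\hat{\Sigma})^{J}\hat{\Omega}\,w_{2}]$ bounded crudely via $\|v_{1}\|\|w_{2}\|\lesssim k$ and $\|\hat{\Omega}\|_{\op}=O(1)$ loses the nuisance-error factors entirely and, at depth $2$ or $3$, is only $O(k^{2}/n)$ or $O(k(k/n)^{3/2})$ --- not absorbable into $(k/n)\times\{\text{nuisance products}\}$. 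The paper takes $J\asymp\log n$ precisely so this remainder is $o(n^{-1/2})$, and shows each intermediate term $j\geq 3$ contributes $(k/n)^{j-1}$ times nuisance norms; you need that full-depth expansion, not a two-step one.

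For the variance your bookkeeping differs from the paper's: you use a H\'ajek-type split $\hat{\IIFF}_{2,2,k}(\hat{\Omega}) = \tfrac{n}{n-1}\bar{v}^{\top}\hat{\Omega}\bar{w} - \tfrac{1}{n-1}\bbP_{n}[v^{\top}\hat{\Omega}w]$, whereas the paper computes the second moment of the $U$-statistic directly, reducing it to the key identity \eqref{var-bound-key} and decoupling $\hat{\Omega}$ from the active indices via ghost copies $Q_{1}',Q_{2}'$ plus the same resolvent expansion, with the non-commutative moment bounds of Lemma \ref{lem:nc} controlling the $j\geq 2$ terms. Your decomposition is a legitimate alternative organization, but note that the hard step is unchanged and not dispatched by your $\mu+\delta$ expansion alone: $\delta_{v}$, $\delta_{w}$ and $\hat{\Omega}$ are mutually correlated, so the ``second-moment computation that costs only $k$'' still requires the ghost-copy/leave-out symmetrization and something equivalent to Lemma \ref{lem:nc} to avoid the spurious $k^{2}$ you correctly warn against; also beware that the diagonal term's variance bound $k^{2}/n^{3}$ needs a leave-one-out argument for the covariances $\cov(v_{i}^{\top}\hat{\Omega}w_{i},\,v_{j}^{\top}\hat{\Omega}w_{j})$, since the naive bound without it is only $k^{2}/n^{2}$. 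With those two repairs --- full-depth expansion for the bias remainder and explicit decoupling for the variance --- your outline matches the paper's proof.
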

    
    \begin{remark}
    The dependence on the condition number $k / n$ in the kernel estimation bias upper bound of the eSOIF estimator $\hat{\IIFF}_{2, 2, k} (\hat{\Omega}_{\tr})$ in \citet{liu2017semiparametric} ($\sqrt{k \log k / n}$) is worse than that of the sSOIF estimator $\hat{\IIFF}_{2, 2, k} (\hat{\Omega})$ reported here ($k / n$).
    \end{remark}

    \allowdisplaybreaks
    \subsection{Proof sketch of Proposition \ref{prop:soif}}
    \label{sec:proof_idea_soif}
    
    \subsubsection{Kernel estimation bias bound}
    \label{sec:proof_idea_soif_eb}
    
    $\EB_{2, k} (\hat{\psi}_{1})$ can be controlled by repeatedly using the matrix identity $(A - B)^{-1} - A^{-1} = - A^{-1} B (A - B)^{-1}$ with $A \equiv \hat{\Sigma}^{\dag} \coloneqq n^{-1} \sum_{i = 3}^{n} Q_{i}$ and $B = n^{-1} Q_{1, 2}$:
    \begin{align*}
        & \ \EB_{2, k} (\hat{\psi}_{1}) \\
        = & \ \bbE \left[ (A_{1} \hat{a} (X_{1}) - 1) \sfzbar_{k} (X_{1})^{\top} (\hat{\Omega}^{-1} - \Id) \sfzbar_{k} (X_{2}) A_{2} (Y_{2} - \hat{b} (X_{2})) \right] \\
        = & \ \sum_{j = 1}^{J - 1} \bbE \left[ (A_{1} \hat{a} (X_{1}) - 1) \sfzbar_{k} (X_{1})^{\top} \left( \Id - \hat{\Sigma}^{\dag} - \frac{Q_{1, 2}}{n} \right)^{j} \sfzbar_{k} (X_{2}) A_{2} (Y_{2} - \hat{b} (X_{2})) \right] \\
        & + \bbE \left[ (A_{1} \hat{a} (X_{1}) - 1) \sfzbar_{k} (X_{1})^{\top} \left( \Id - \hat{\Sigma}^{\dag} - \frac{Q_{1, 2}}{n} \right)^{J} \hat{\Omega} \sfzbar_{k} (X_{2}) A_{2} (Y_{2} - \hat{b} (X_{2})) \right].
    \end{align*}
    By choosing $J \asymp \log n$, the second term of the above display can be shown to be $o (n^{- 1 / 2})$.
    
    For the first term, we only look at $j = 1, 2$ in the main text and the remaining analysis is a special case of the proof of Theorem \ref{thm:properties} in Appendix \ref{app:main}.
    
    For $j = 1$, we have
    \allowdisplaybreaks
    \begin{align}
        & \ \bbE \left[ (A_{1} \hat{a} (X_{1}) - 1) \sfzbar_{k} (X_{1})^{\top} \left( \Id - \hat{\Sigma}^{\dag} - \frac{Q_{1, 2}}{n} \right) \sfzbar_{k} (X_{2}) A_{2} (Y_{2} - \hat{b} (X_{2})) \right] \notag \\
        = & \ \frac{2}{n} \bbE \left[ \left( \frac{\hat{a} (X_{1})}{a (X_{1})} - 1 \right) \sfzbar_{k} (X_{1})^{\top} \right] \bbE \left[ \sfzbar_{k} (X_{2}) (b (X_{2}) - \hat{b} (X_{2})) \right] \notag \\
        & - \frac{1}{n} \bbE \left[ \left( A_{1} \hat{a} (X_{1}) - 1 \right) \sfzbar_{k} (X_{1})^{\top} Q_{1, 2} \sfzbar_{k} (X_{2}) A_{2} \left( Y_{2} - \hat{b} (X_{2}) \right) \right] \label{bias_j1} \\
        = & \ O \left( \frac{1}{n} \right) - \frac{1}{n} \bbE \left[ \left( \frac{\hat{a} (X_{1}) - 1}{a (X_{1})} \right) \sfzbar_{k} (X_{1})^{\top} \sfzbar_{k} (X_{1}) \sfzbar_{k} (X_{1})^{\top} \right] \bbE \left[ \sfzbar_{k} (X_{2}) (b (X_{2}) - \hat{b} (X_{2})) \right] \notag \\
        & - \frac{1}{n} \bbE \left[ \left( \frac{\hat{a} (X_{1})}{a (X_{1})} - 1 \right) \sfzbar_{k} (X_{1})^{\top} \right] \bbE \left[ \sfzbar_{k} (X_{2}) \sfzbar_{k} (X_{2})^{\top} \sfzbar_{k} (X_{2}) (b (X_{2}) - \hat{b} (X_{2})) \right] \notag \\
        \lesssim & \ \frac{1}{n} + \frac{k}{n} \left\Vert \frac{\hat{a} - 1}{a} \right\Vert_{2} \Vert b - \hat{b} \Vert_{2} + \frac{k}{n} \left\Vert \frac{\hat{a}}{a} - 1 \right\Vert_{2} \Vert b - \hat{b} \Vert_{2} \notag
    \end{align}
    where the last line follows from triangle inequality, Cauchy-Schwarz inequality and Assumptions \ref{cond:nuis}, \ref{cond:b}(i) and \ref{cond:b}(ii).
    
    For $j = 2$, we have
    \begin{align}
        & \ \bbE \left[ (A_{1} \hat{a} (X_{1}) - 1) \sfzbar_{k} (X_{1})^{\top} \left( \Id - \hat{\Sigma}^{\dag} - \frac{Q_{1, 2}}{n} \right)^{2} \sfzbar_{k} (X_{2}) A_{2} (Y_{2} - \hat{b} (X_{2})) \right] \notag \\
        = & \ \bbE \left[ \left( \frac{\hat{a} (X_{1})}{a (X_{1})} - 1 \right) \sfzbar_{k} (X_{1})^{\top} \right] \bbE \left[ \left( \Id - \hat{\Sigma}^{\dag} \right)^{2} \right] \bbE \left[ \sfzbar_{k} (X_{2}) (b (X_{2}) - \hat{b} (X_{2})) \right] \notag \\
        & - \frac{2}{n^{2}} \bbE \left[ \left( A_{1} \hat{a} (X_{1}) - 1 \right) \sfzbar_{k} (X_{1})^{\top} Q_{1, 2} \sfzbar_{k} (X_{2}) A_{2} \left( Y_{2} - \hat{b} (X_{2}) \right) \right] \notag \\
        & + \frac{1}{n^{2}} \bbE \left[ \left( A_{1} \hat{a} (X_{1}) - 1 \right) \sfzbar_{k} (X_{1})^{\top} Q_{1, 2}^{2} \sfzbar_{k} (X_{2}) A_{2} \left( Y_{2} - \hat{b} (X_{2}) \right) \right] \label{bias_j2_1} \\
        \eqqcolon & \ (\mathrm{I}) + (\mathrm{II}) + (\mathrm{III}). \notag
    \end{align}
    Since $(\mathrm{II})$ is dominated by the term for $j = 1$, we only need to further analyze $(\mathrm{I})$ and $(\mathrm{III})$. $\mathrm{(III)}$ can be bounded by
    \begin{equation}
        \label{first-non-commutative}
        (\mathrm{III}) \lesssim \left( \frac{k}{n} \right)^{2} \left\{ \left\Vert \frac{\hat{a} - 1}{a} \right\Vert_{2} \Vert b - \hat{b} \Vert_{2} + \left\Vert \frac{\hat{a}}{a} - 1 \right\Vert_{2} \Vert b - \hat{b} \Vert_{2} + \left( \left\Vert \frac{\hat{a} - 1}{a} \right\Vert_{\infty} \Vert b - \hat{b} \Vert_{2} \right) \wedge \left( \left\Vert \frac{\hat{a} - 1}{a} \right\Vert_{2} \Vert b - \hat{b} \Vert_{\infty} \right) \right\}
    \end{equation}
    where the first two terms are due to the first three terms in the (non-commutative) expansion of
    \begin{equation}
        \label{q12_expand}
        Q_{1, 2}^{2} = Q_{1}^{2} + Q_{2}^{2} + Q_{1} Q_{2} + Q_{2} Q_{1},
    \end{equation}
    and the third term comes from the last term in the above expansion. The appearance of the estimation error in $L_{\infty}$-norm is due to the opposite order of sample points indexed by $1$ and $2$ between the ``meat'' $Q_{2} Q_{1}$ and the ``bread slices'' of the ``sandwich'' structure $\sfzbar_{k} (X_{1})^{\top} [\cdots] \sfzbar_{k} (X_{2})$. 
    
    For $(\mathrm{I})$, we need to expand $(\Id - \hat{\Sigma}^{\dag})^{2}$.
    \begin{align}
        (\mathrm{I}) = & \ \bbE \left[ \left( \frac{\hat{a} (X_{1})}{a (X_{1})} - 1 \right) \sfzbar_{k} (X_{1})^{\top} \right] \bbE \left[ \Id - 2 \hat{\Sigma}^{\dag} + \hat{\Sigma}^{\dag}{}^{2} \right] \bbE \left[ \sfzbar_{k} (X_{2}) (b (X_{2}) - \hat{b} (X_{2})) \right] \notag \\
        = & \ \left( \frac{4}{n} - 1 \right) \bbE \left[ \left( \frac{\hat{a} (X_{1})}{a (X_{1})} - 1 \right) \sfzbar_{k} (X_{1})^{\top} \right] \bbE \left[ \sfzbar_{k} (X_{2}) (b (X_{2}) - \hat{b} (X_{2})) \right] \notag \\
        & + \frac{(n - 2) (n - 3)}{n^{2}} \bbE \left[ \left( \frac{\hat{a} (X_{1})}{a (X_{1})} - 1 \right) \sfzbar_{k} (X_{1})^{\top} \right] \bbE \left[ \sfzbar_{k} (X_{2}) (b (X_{2}) - \hat{b} (X_{2})) \right] \notag \\
        & + \frac{n - 2}{n^{2}} \bbE \left[ \left( \frac{\hat{a} (X_{1})}{a (X_{1})} - 1 \right) \sfzbar_{k} (X_{1})^{\top} \right] \bbE \left[ Q_{3}^{2} \right] \bbE \left[ \sfzbar_{k} (X_{2}) (b (X_{2}) - \hat{b} (X_{2})) \right] \notag \\
        = & \ \left( \frac{6}{n^{2}} - \frac{1}{n} \right) \bbE \left[ \left( \frac{\hat{a} (X_{1})}{a (X_{1})} - 1 \right) \sfzbar_{k} (X_{1})^{\top} \right] \bbE \left[ \sfzbar_{k} (X_{2}) (b (X_{2}) - \hat{b} (X_{2})) \right] \notag \\
        & + \left( \frac{1}{n} - \frac{2}{n^{2}} \right) \bbE \left[ \left( \frac{\hat{a} (X_{1})}{a (X_{1})} - 1 \right) \sfzbar_{k} (X_{1})^{\top} \right] \bbE \left[ Q_{3}^{2} \right] \bbE \left[ \sfzbar_{k} (X_{2}) (b (X_{2}) - \hat{b} (X_{2})) \right]. \label{bias_j2_2}
    \end{align}
    It is straightforward to see the first term in the last equality of the above display is dominated by the term for $j = 1$, whereas the second term can be shown to be bounded by
    \begin{align*}
        \frac{k}{n} \left\Vert \frac{\hat{a}}{a} - 1 \right\Vert_{2} \Vert b - \hat{b} \Vert_{2}.
    \end{align*}
    
    Taken together, the terms for $j = 1$ and $j = 2$ give the desired bound for $\EB_{2, k} (\hat{\psi}_{1})$ in \eqref{EB2}. It remains to prove the terms for $j \geq 3$ are of smaller order, which is deferred to Appendix \ref{app:main_bias} for the general case. For $j \geq 3$, the corresponding term is of order
    \begin{align*}
        \left( \frac{k}{n} \right)^{j - 1} \left\Vert \frac{\hat{a}}{a} - 1 \right\Vert_{2} \Vert b - \hat{b} \Vert_{2}.
    \end{align*}
    
    \begin{remark}
    \label{rem:compare}
    Now is a perfect time to compare how the analysis of the kernel estimation bias of sSOIF differs from that of eSOIF of \citet{liu2017semiparametric}. The only difference between the eSOIF and sSOIF estimators are the samples used to estimate $\Omega$. Using the nuisance sample instead, the (conditional) kernel estimation bias of $\hat{\IIFF}_{2, 2, k} (\hat{\Omega}_{\tr})$ conditioning on the nuisance sample data is
    \begin{align*}
        & \ \bbE \left[ \hat{\IIFF}_{2, 2, k} (\hat{\Omega}_{\tr}) - \hat{\IIFF}_{2, 2, k} \right] \\
        = & \ \bbE \left[ (A_{1} \hat{a} (X_{1}) - 1) \sfzbar_{k} (X_{1})^{\top} \right] \left( \hat{\Omega}_{\tr} - \Id \right) \bbE \left[ A_{2} \sfzbar_{k} (X_{2}) (Y_{2} - \hat{b} (X_{2})) \right].
    \end{align*}
    From this, we can conclude
    \begin{align*}
        \left\vert \bbE \left[ \hat{\IIFF}_{2, 2, k} (\hat{\Omega}_{\tr}) - \hat{\IIFF}_{2, 2, k} \right] \right\vert \lesssim \left( \frac{k \log k}{n} \right)^{1 / 2} \left\Vert \frac{\hat{a}}{a} - 1 \right\Vert_{2} \Vert \hat{b} - b \Vert_{2}
    \end{align*}
    by using matrix Bernstein or Khintchine inequality \citep{rudelson1999random, bandeira2021matrix}; also see \citet{couillet2022random}. \citet{liu2017semiparametric} further show that
    \begin{align*}
    \left\vert \bbE \left[ \hat{\IIFF}_{(2, 2) \rightarrow (m, m), k} (\hat{\Omega}_{\tr}) - \hat{\IIFF}_{2, 2, k} \right] \right\vert \lesssim \left( \frac{k \log k}{n} \right)^{m / 2} \left\Vert \frac{\hat{a}}{a} - 1 \right\Vert_{2} \Vert \hat{b} - b \Vert_{2}.
    \end{align*}
    However, as pointed out in Section \ref{sec:novelty}, the finite-sample performance of eHOIF estimators is not well-reflected by these upper bounds, prompting the need of developing sHOIF estimators.
    \end{remark}
    
    \subsubsection{Variance bound}
    \label{sec:proof_idea_soif_var}
    
    The variance bound is technically involved. The missing steps can be found in Appendix \ref{app:soif_var}. The key step is to show
    \begin{equation}
        \label{var-bound-key}
        \bbE \left[ A_{1} \sfzbar_{k} (X_{1})^{\top} \hat{\Omega} \sfzbar_{k} (X_{2}) A_{3} \sfzbar_{k} (X_{3})^{\top} \hat{\Omega} \sfzbar_{k} (X_{4}) \right] - \left( \bbE \left[ A_{1} \sfzbar_{k} (X_{1})^{\top} \hat{\Omega} \sfzbar_{k} (X_{2}) \right] \right)^{2} = O \left( \frac{1}{n} \right).
    \end{equation}
    
    To prove \eqref{var-bound-key}, it is sufficient to exhibit
    \begin{equation}
        \label{var-bound-key-1}
        \begin{split}
        & \ \bbE \left[ A_{1} \sfzbar_{k} (X_{1})^{\top} \sfzbar_{k} (X_{2}) A_{3} \sfzbar_{k} (X_{3})^{\top} \left( \hat{\Omega} - \Id \right) \sfzbar_{k} (X_{4}) \right] \\
        & - \bbE \left[ A_{1} \sfzbar_{k} (X_{1})^{\top} \sfzbar_{k} (X_{2}) \right] \bbE \left[ A_{1} \sfzbar_{k} (X_{1})^{\top} \left( \hat{\Omega} - \Id \right) \sfzbar_{k} (X_{2}) \right] = O \left( \frac{1}{n} \right)
        \end{split}
    \end{equation}
    and
    \begin{equation}
        \label{var-bound-key-2}
        \begin{split}
        & \ \bbE \left[ A_{1} \sfzbar_{k} (X_{1})^{\top} \left( \hat{\Omega} - \Id \right) \sfzbar_{k} (X_{2}) A_{3} \sfzbar_{k} (X_{3})^{\top} \left( \hat{\Omega} - \Id \right) \sfzbar_{k} (X_{4}) \right] \\
        & - \bbE \left[ A_{1} \sfzbar_{k} (X_{1})^{\top} \left( \hat{\Omega} - \Id \right) \sfzbar_{k} (X_{2}) \right] \bbE \left[ A_{1} \sfzbar_{k} (X_{1})^{\top} \left( \hat{\Omega} - \Id \right) \sfzbar_{k} (X_{2}) \right] = O \left( \frac{1}{n} \right).
        \end{split}
    \end{equation}
    
    Recall $\hat{\Omega} = \hat{\Sigma}^{-1}$ and $\hat{\Sigma} = n^{-1} \sum_{i = 1}^{n} Q_{i}$. We introduce independent ``ghost copies'' $Q_{1}', Q_{2}'$ of $Q_{1}, Q_{2}$ and denote $\hat{\Omega}' = (\hat{\Sigma}')^{-1}$ and $\hat{\Sigma}' = n^{-1} \sum_{i = 3}^{n} Q_{i} + n^{-1} (Q_{1}' + Q_{2}')$. Then \eqref{var-bound-key-1} is equivalent to
    \begin{equation}
        \label{var-bound-key-1-1}
        \bbE \left[ A_{1} \sfzbar_{k} (X_{1})^{\top} \sfzbar_{k} (X_{2}) A_{3} \sfzbar_{k} (X_{3})^{\top} \left( \hat{\Omega} - \hat{\Omega}' \right) \sfzbar_{k} (X_{4}) \right] = O \left( \frac{1}{n} \right).
    \end{equation}
    
    Let $\bar{\Omega} \equiv \bar{\Sigma}$ and $\bar{\Sigma} \equiv n^{-1} \sum_{i = 5}^{n} Q_{i}$. Repeating the matrix identity $(A + B)^{-1} - A^{-1} = - A^{-1} B (A + B)^{-1}$ on \eqref{var-bound-key-1-1} by setting $A = \bar{\Sigma}$ and $B = n^{-1} (Q_{1, 2} + Q_{3, 4})$ or $B = n^{-1} (Q_{1, 2}' + Q_{3, 4})$, we have
    \begin{align*}
        & \ \bbE \left[ A_{1} \sfzbar_{k} (X_{1})^{\top} \sfzbar_{k} (X_{2}) A_{3} \sfzbar_{k} (X_{3})^{\top} \left( \hat{\Omega} - \hat{\Omega}' \right) \sfzbar_{k} (X_{4}) \right] \\
        = & \ \sum_{j = 1}^{J - 1} (-1)^{j} \bbE \left[ A_{1} \sfzbar_{k} (X_{1})^{\top} \sfzbar_{k} (X_{2}) A_{3} \sfzbar_{k} (X_{3})^{\top} \left\{ \left( \bar{\Omega} \frac{Q_{1, 2} + Q_{3, 4}}{n} \right)^{j} - \left( \bar{\Omega} \frac{Q_{1, 2}' + Q_{3, 4}}{n} \right)^{j} \right\} \bar{\Omega} \sfzbar_{k} (X_{4}) \right] \\
        & + (-1)^{J} \bbE \left[ A_{1} \sfzbar_{k} (X_{1})^{\top} \sfzbar_{k} (X_{2}) A_{3} \sfzbar_{k} (X_{3})^{\top} \left\{ \left( \bar{\Omega} \frac{Q_{1, 2} + Q_{3, 4}}{n} \right)^{J} - \left( \bar{\Omega} \frac{Q_{1, 2}' + Q_{3, 4}}{n} \right)^{J} \right\} \hat{\Omega} \sfzbar_{k} (X_{4}) \right].
    \end{align*}
    
    Let $J \asymp \log n$. The second term of the above display can be shown to be $o (1 / n)$. Proceeding to the first term, it is easy to see from Assumption \ref{cond:b}(iii) that the term corresponding to $j = 1$:
    \begin{align*}
        & \ \frac{1}{n} \bbE \left[ A_{1} \sfzbar_{k} (X_{1})^{\top} \sfzbar_{k} (X_{2}) A_{3} \sfzbar_{k} (X_{3})^{\top} \bar{\Omega} (Q_{1, 2} - Q_{1, 2}') \bar{\Omega} \sfzbar_{k} (X_{4}) \right] \\
        = & \ \frac{1}{n} \left( \begin{array}{c}
        \bbE \left\{ A_{1} \sfzbar_{k} (X_{1})^{\top} \bbE [\sfzbar_{k} (X_{2})] \bbE [A_{3} \sfzbar_{k} (X_{3})]^{\top} \bar{\Omega} \sfzbar_{k} (X_{1}) \sfzbar_{k} (X_{1})^{\top} \bar{\Omega} \bbE [\sfzbar_{k} (X_{4})] \right\} \\
        + \ \bbE \left\{ \bbE [A_{1} \sfzbar_{k} (X_{1})]^{\top} \sfzbar_{k} (X_{2}) \bbE [A_{3} \sfzbar_{k} (X_{3})]^{\top} \bar{\Omega} A_{2} \sfzbar_{k} (X_{2}) \sfzbar_{k} (X_{2})^{\top} \bar{\Omega} \bbE [\sfzbar_{k} (X_{4})] \right\} \\
        - \ \bbE \left\{ \bbE [A_{1} \sfzbar_{k} (X_{1})]^{\top} \sfzbar_{k} (X_{2}) \bbE [A_{3} \sfzbar_{k} (X_{3})]^{\top} \bar{\Omega} \bbE [Q_{1, 2}'] \bar{\Omega} \bbE [\sfzbar_{k} (X_{4})] \right\}
        \end{array} \right) = O \left( \frac{1}{n} \right).
    \end{align*}
    
    Similarly, the terms corresponding to $j \geq 2$ can be shown to be $O \left( \frac{1}{n} \left( \frac{2 k}{n} \right)^{j - 1} \right)$, a consequence of Lemma \ref{lem:nc} below. Note that the extra factor $2$ appears because there are $O (2^{j})$ terms in total by expanding out $\left( \frac{Q_{1, 2} + Q_{3, 4}}{n} \right)^{j}$ and $\left( \frac{Q_{1, 2}' + Q_{3, 4}}{n} \right)^{j}$.
    
    \begin{lemma}
    \label{lem:nc}
    Given a positive integer $j$. Given any pair of integers $j_{1} \geq 0, j_{2} > 0$ such that $j_{1} + j_{2} = j$, for any subset $\mathfrak{j} \subseteq \{0, 1\}^{j}$ of the $j$-dimensional Boolean hypercube with $\Vert \mathfrak{j} \Vert_{1} = j_{1}$, we have
    \begin{equation}
        \label{nc-good}
        \bbE \left[ \sfzbar_{k} (X_{1})^{\top} \sfzbar_{k} (X_{2}) \sfzbar_{k} (X_{3})^{\top} \left( \prod_{\ell = 1}^{j} Q_{1, 2}^{\mathfrak{j}_{\ell}} Q_{3, 4}^{(1 - \mathfrak{j}_{\ell})} \right) \sfzbar_{k} (X_{4}) \right] \lesssim k^{j - 1}.
    \end{equation}
    However, if $j_{1} = 0$ and $j_{2} = j$, we have
    \begin{equation}
        \label{nc-bad}
        \bbE \left[ \sfzbar_{k} (X_{1})^{\top} \sfzbar_{k} (X_{2}) \sfzbar_{k} (X_{3})^{\top} Q_{3, 4}^{j} \sfzbar_{k} (X_{4}) \right] \lesssim k^{j}.
    \end{equation}
    \end{lemma}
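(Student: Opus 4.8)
The plan is to reduce both inequalities to a finite collection of ``kernel-product'' expectations over four i.i.d.\ covariate vectors and then to evaluate each by integrating out the covariates one at a time, in a carefully chosen order, keeping a ledger of the power of $k$ lost at each step. First I would invoke the normalization $\Sigma\equiv\Omega\equiv\Id$ (so $K_{k}(x,x')=\sfzbar_{k}(x)^{\top}\sfzbar_{k}(x')$), use $A_{i}\in\{0,1\}$ to drop the treatment weights (they only help, and $\bbE[A\sfzbar_{k}\sfzbar_{k}^{\top}]=\Id$), and record from Assumption~\ref{cond:b} the three facts used throughout: (a) $K_{k}(x,x)\le Bk$, hence $|K_{k}(x,x')|\le Bk$ pointwise; (b) the idempotency $\bbE_{X}[K_{k}(x,X)K_{k}(X,x')]=K_{k}(x,x')$ (up to the well-conditioned un-weighted Gram matrix $\bbE[\sfzbar_{k}\sfzbar_{k}^{\top}]$, which I treat as $\Id$); and (c) for bounded $h$, $\bbE[\sfzbar_{k}(X)h(X)]$ is the coefficient vector of $\Pi[h\mid\sfzbar_{k}]$, so $\|\bbE[\sfzbar_{k}(X)h(X)]\|_{2}\le\|h\|_{2}\le\|h\|_{\infty}$ \emph{and} $x\mapsto\sfzbar_{k}(x)^{\top}\bbE[\sfzbar_{k}(X)h(X)]=\Pi[h\mid\sfzbar_{k}](x)$ is $\lesssim\|h\|_{\infty}$ pointwise. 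It is this last, $L_{\infty}$-stability, feature---not Cauchy--Schwarz---that makes a degree-one contraction cost $O(1)$ and a degree-three contraction cost $O(k)$ rather than $O(\sqrt{k})$ and $O(k^{3/2})$; in particular it yields $\bbE_{X'}[\,|K_{k}(x,X')|\,]\lesssim 1$, hence $\bbE[\,|K_{k}(X_{3},X_{4})|^{m}\,]\lesssim k^{m-1}$ for every $m\ge1$, and $\|\bbE\sfzbar_{k}\|_{2}^{2}\lesssim 1$ (apply (c) to the constant function).

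Expanding $Q_{1,2}=A_{1}\sfzbar_{k}(X_{1})\sfzbar_{k}(X_{1})^{\top}+A_{2}\sfzbar_{k}(X_{2})\sfzbar_{k}(X_{2})^{\top}$ and likewise $Q_{3,4}$, the left side of \eqref{nc-good}/\eqref{nc-bad} becomes a sum of at most $2^{j}=O(1)$ terms of the form $\bbE\!\big[K_{k}(X_{1},X_{2})\prod_{\ell=0}^{j}K_{k}(X_{b_{\ell}},X_{b_{\ell+1}})\big]$, where $b_{0}=3$, $b_{j+1}=4$, and the walk $(b_{1},\dots,b_{j})$ on $\{1,2,3,4\}$ obeys $b_{\ell}\in\{1,2\}$ if $\mathfrak{j}_{\ell}=1$ and $b_{\ell}\in\{3,4\}$ if $\mathfrak{j}_{\ell}=0$. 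Such a term is the expectation of a product of $j+2$ kernel evaluations, i.e.\ it is indexed by a multigraph on $\{1,2,3,4\}$ (the edge $\{1,2\}$ plus the walk), every vertex of which has odd degree; it suffices to bound a single term by $k^{j-1}$ (resp.\ $k^{j}$) and sum the $O(1)$ of them.

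To evaluate one term: (i) bound every self-loop factor $K_{k}(X_{v},X_{v})$---and more generally every factor not needed to keep the relevant vertices connected---by $\le Bk$ \emph{before} integrating (order matters: contracting the clean $\bbE[\sfzbar_{k}\sfzbar_{k}^{\top}]=\Id$ against a bare pair of edges saves a factor $k$, contracting it against $\|\sfzbar_{k}\|^{2a}\sfzbar_{k}\sfzbar_{k}^{\top}$ does not); (ii) always eliminate a current degree-one vertex next, which by (c) costs only $O(1)$ and converts the surviving edge at its neighbour into a bounded coefficient $\Pi[1\mid\sfzbar_{k}]$ carried by that neighbour; (iii) once no degree-one vertex remains, eliminate a vertex $v$ by writing $\bbE_{X_{v}}[\,\cdots\,]=\sfzbar_{k}(X_{u})^{\top}\bbE[\sfzbar_{k}(X)h(X)]=\Pi[h\mid\sfzbar_{k}](X_{u})$ for a suitable neighbour $u$ and an ``inner'' function $h$ built from the remaining neighbours, then bound this by $\lesssim\|h\|_{\infty}$, a product of factors $Bk$ and $\|\sfzbar_{k}(X_{w})\|^{2}$ with $w$ not yet eliminated. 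When $\mathfrak{j}=\mathbf{0}$ (all $Q_{3,4}$) vertices $1,2$ are leaves: $\bbE_{X_{1},X_{2}}[K_{k}(X_{1},X_{2})]=\|\bbE\sfzbar_{k}\|_{2}^{2}\lesssim1$ factors off and one is left with a product of $j+1$ kernel factors on $\{3,4\}$, equal (since the walk $3\leadsto4$ crosses at least once) to $\|\sfzbar_{k}(X_{3})\|^{2a}\|\sfzbar_{k}(X_{4})\|^{2b}K_{k}(X_{3},X_{4})^{m}$ with $a+b+m=j+1$, $m\ge1$; bounding the self-loop powers by $(Bk)^{a+b}$ and using $\bbE[\,|K_{k}(X_{3},X_{4})|^{m}\,]\lesssim k^{m-1}$ gives $\lesssim k^{j}$, proving \eqref{nc-bad}. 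If instead at least one $Q_{1,2}$ is present, the walk makes at least one ``detour'' into $\{1,2\}$; after clearing the leaves among $\{1,2,3,4\}$ and eliminating $\sfzbar_{k}(X_{1}),\sfzbar_{k}(X_{2})$, that detour collapses to a single $\{3,4\}$-factor of bounded operator norm (plus $O(k)$-factors from within-$\{1,2\}$ self-loops that are strictly cheaper than the $\|\sfzbar_{k}(X_{3})\|^{2},\|\sfzbar_{k}(X_{4})\|^{2}$ they displace), so the residual $\{3,4\}$-problem carries at most $j$ kernel factors and is $\lesssim k^{j-1}$, proving \eqref{nc-good}. The discrepancy is exactly the role of the hypothesis separating the two cases: a single $Q_{1,2}$ suffices to shave one power of $k$, whereas the $k^{j}$ rate is achieved only when \emph{every} factor is $Q_{3,4}$.

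The main obstacle is the bookkeeping at high-degree vertices. Since vertices may have degree $3,5,\dots$, a naive triangle/Cauchy--Schwarz bound over-counts the power of $k$ (it gives $k^{j+1/2}$ in the $m=1$ sub-case above, and $k^{5/2}$ or $k^{3}$ for $j=3$ mixed terms). One must: (1) at each elimination, choose \emph{which} incident factors to contract cleanly (paying $\bbE[\sfzbar_{k}\sfzbar_{k}^{\top}]=\Id$, or a projection bounded by $L_{\infty}$-stability) and which to bound by $Bk$, and check such a choice is always available so that the $\{3,4\}$-side total is exactly $k^{(\#\text{factors})-1}$; (2) eliminate in the order degree-one, then $\{1,2\}$-vertices, then $\{3,4\}$-vertices, so that the inner functions $h$ stay bounded by the right power of $Bk$ and never acquire a $\|\sfzbar_{k}(X_{w})\|$ for a $w$ still entangled with too much of the graph; and (3) verify that each maximal run of $1$'s in $\mathfrak{j}$ removes exactly one power of $k$ relative to the $\mathfrak{j}=\mathbf{0}$ baseline. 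A more mechanical alternative---perhaps cleaner to write---is to expand every $\langle\sfzbar_{k}(X_{u}),\sfzbar_{k}(X_{v})\rangle$ in coordinates and bound the sum over edge-labellings of the multigraph by a combinatorial count grouped by the pairing structure at each vertex; this turns the $k$-accounting into counting free coordinate indices and makes the $k^{j-1}$-versus-$k^{j}$ dichotomy an explicit vertex/edge count, the only delicate point again being the degree-$\ge3$ vertices.
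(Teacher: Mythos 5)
Your proposal is correct in substance but takes a genuinely different route from the paper's proof. For \eqref{nc-bad} the two arguments essentially coincide: the paper also first factors out $\bbE[\sfzbar_{k}(X_{1})]^{\top}\bbE[\sfzbar_{k}(X_{2})]=O(1)$ and then bounds $\bbE[\sfzbar_{k}(X_{3})^{\top}Q_{3,4}^{j}\sfzbar_{k}(X_{4})]$ by $k^{j}$, except that it does so with a single Cauchy--Schwarz step on the whole sandwich rather than your expansion into $\|\sfzbar_{k}(X_{3})\|^{2a}\|\sfzbar_{k}(X_{4})\|^{2b}K_{k}(X_{3},X_{4})^{m}$ plus $\bbE[|K_{k}|^{m}]\lesssim k^{m-1}$. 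For \eqref{nc-good} the paper argues by induction on $j$ (base case $j=1$ from the main text, inductive step ``by Cauchy--Schwarz''), whereas you expand $Q_{1,2}$ and $Q_{3,4}$ into rank-one terms, index each summand by a walk-multigraph on $\{1,2,3,4\}$, and eliminate vertices in a prescribed order using $L_{\infty}$-stability, the reproducing contraction $\bbE_{X_{v}}[K_{k}(X_{u},X_{v})K_{k}(X_{v},X_{w})]=K_{k}(X_{u},X_{w})$, and $K_{k}(x,x)\le Bk$. Your route is more mechanical and makes visible \emph{why} a single $Q_{1,2}$ buys one power of $k$: the $\{1,2\}$ detour plus the extra edge $\{1,2\}$ turns the graph into a connected multigraph on four vertices with $j+2$ edges, so the ``cost equals cycle rank'' accounting gives $k^{j-1}$ rather than $k^{j}$; the paper's induction hides this. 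The price is that your hard case --- eliminating a degree-$\ge 3$ vertex whose removal disconnects the graph, where bounding several incident edges by $\sqrt{Bk}$ each can over-pay relative to the cycle-rank target --- is flagged but not discharged; you should verify (using the Eulerian-path structure of the walk, which forces all within-$\{1,2\}$ excursions to enter and leave through $\{3,4\}$-edges that contract cleanly) that the prescribed elimination order never over-pays. To be fair, the paper's inductive step is stated with comparable terseness, so neither write-up is fully self-contained at exactly this point; your examples and the leaf-first/self-loop-last discipline are, however, consistent with the claimed bounds in every configuration I checked.
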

    
    The proof of Lemma \ref{lem:nc} can be found in Appendix \ref{app:nc}. Finally, we defer the proof of \eqref{var-bound-key-2} to the online supplements, which can be proved in a similar fashion.
    
    \subsection{Numerical stability and time complexity of $\hat{\IIFF}_{2, 2, k} (\hat{\Omega})$}
    \label{sec:numerical_soif}
    
    As discussed in Section \ref{sec:intro}, the key motivation for proposing sHOIF estimators is the numerical instability observed for eHOIF estimators. As a warm-up, we rigorously prove the numerical stability and calculate the time complexity of $\hat{\IIFF}_{2, 2, k} (\hat{\Omega})$ in this section. The reason why $\hat{\IIFF}_{2, 2, k} (\hat{\Omega})$ can be numerically unstable is that when $k$ is near $n$, it is highly likely $\lambda_{\min} (\hat{\Sigma}) \approx 0$ and hence $\lambda_{\max} (\hat{\Omega})$ is close to infinity. But:
    \begin{proposition} \label{prop:stability}
    $\hat{\IIFF}_{2, 2, k} (\hat{\Omega})$ does not depend on the eigenvalues of $\hat{\Omega}$.
    \end{proposition}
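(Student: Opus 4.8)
The plan is to exhibit a representation of $\hat{\IIFF}_{2,2,k}(\hat{\Omega})$ in which $\hat{\Omega}$ enters only through an orthogonal projection matrix --- whose spectrum is contained in $\{0,1\}$ --- so that the arbitrarily large (indeed formally infinite, once $\hat{\Sigma}$ is rank-deficient) eigenvalues of $\hat{\Omega}=\hat{\Sigma}^{-1}$ cannot propagate into the estimator.

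First, observe that $\hat{\Omega}$ enters $\hat{\IIFF}_{2,2,k}(\hat{\Omega})=\bbU_{n,2}[\IF_{2,2,k;1,2}(\hat{\Omega})]$ only through the bilinear quantities $\sfzbar_{k}(X_{i_{1}})^{\top}\hat{\Omega}\,\sfzbar_{k}(X_{i_{2}})\,A_{i_{2}}$ over $i_{1}\neq i_{2}$. The key is the factor $A_{i_{2}}$. Writing $\zeta_{i}\coloneqq A_{i}\sfzbar_{k}(X_{i})$ and letting $\calZ$ be the $n\times k$ matrix with rows $\zeta_{i}^{\top}$, one has $n\hat{\Sigma}=\sum_{i}Q_{i}=\calZ^{\top}\calZ$ because $A_{i}^{2}=A_{i}$, while $\sfzbar_{k}(X_{i_{1}})^{\top}\hat{\Omega}\,\sfzbar_{k}(X_{i_{2}})\,A_{i_{2}}=\sfzbar_{k}(X_{i_{1}})^{\top}\hat{\Omega}\,\zeta_{i_{2}}$ with $\zeta_{i_{2}}=\calZ^{\top}e_{i_{2}}\in\mathrm{colspan}(\hat{\Sigma})$, where $e_{i_{2}}$ is the $i_{2}$-th standard basis vector of $\bbR^{n}$. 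Reading $\hat{\Omega}=\hat{\Sigma}^{+}$ (Moore--Penrose), the vector $\hat{\Omega}\zeta_{i_{2}}$ is unambiguous, and the identity $(\calZ^{\top}\calZ)^{+}\calZ^{\top}=\calZ^{+}$ gives $\hat{\Omega}\zeta_{i_{2}}=n(\calZ^{\top}\calZ)^{+}\calZ^{\top}e_{i_{2}}=n\,\calZ^{+}e_{i_{2}}$, so that
\begin{align*}
\sfzbar_{k}(X_{i_{1}})^{\top}\hat{\Omega}\,\sfzbar_{k}(X_{i_{2}})\,A_{i_{2}}\;=\;n\,\sfzbar_{k}(X_{i_{1}})^{\top}\calZ^{+}e_{i_{2}},
\end{align*}
and, for treated $i_{1}$ (so $\sfzbar_{k}(X_{i_{1}})=\zeta_{i_{1}}=\calZ^{\top}e_{i_{1}}$), this collapses to $n\,(\calZ\calZ^{+})_{i_{1}i_{2}}=n\,\hat{H}_{i_{1}i_{2}}$, where $\hat{H}\coloneqq\calZ\calZ^{+}=\calZ(\calZ^{\top}\calZ)^{+}\calZ^{\top}$ is the orthogonal projection onto $\mathrm{colspan}(\calZ)$. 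Substituting these identities back into $\bbU_{n,2}[\cdot]$, $\hat{\IIFF}_{2,2,k}(\hat{\Omega})$ becomes a fixed function of the design $\calZ$, the frozen nuisance values $\hat{a},\hat{b}$, and the outcomes, one that references $\calZ$ only through $\calZ^{+}$ and the projector $\hat{H}$ --- never through the eigenvalues $\lambda_{j}(\hat{\Omega})=1/\lambda_{j}(\hat{\Sigma})$, which explode exactly as $\lambda_{\min}(\hat{\Sigma})\to0$ and cease to exist once $\hat{\Sigma}$ is singular. This is precisely the asserted independence.

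An equivalent route is by invariance: since $\hat{\Omega}$ appears only inside $\sfzbar_{k}(\cdot)^{\top}\hat{\Omega}\,\sfzbar_{k}(\cdot)$ and the reparametrization $\sfzbar_{k}\mapsto M\sfzbar_{k}$, for any invertible $k\times k$ matrix $M$, sends $\hat{\Sigma}\mapsto M\hat{\Sigma}M^{\top}$ and hence $\hat{\Omega}\mapsto M^{-\top}\hat{\Omega}M^{-1}$ while leaving that bilinear form --- and every other ingredient of the estimator --- untouched, $\hat{\IIFF}_{2,2,k}$ is invariant under all such $M$. As $M$ varies, $M^{-\top}\hat{\Omega}M^{-1}$ sweeps out the entire congruence class of $\hat{\Omega}$, which by Sylvester's law of inertia realizes every positive semidefinite matrix of the same rank (in particular one whose nonzero eigenvalues all equal $1$); the common value of $\hat{\IIFF}_{2,2,k}$ therefore cannot be a function of the eigenvalues of $\hat{\Omega}$.

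The step requiring the most care is the rank-deficient case, which is precisely the regime of interest (for $k$ near $n$ with approximately balanced treatment the number of treated units is $<k$, so $\hat{\Sigma}$ is genuinely singular and $\hat{\Omega}=\hat{\Sigma}^{-1}$ does not literally exist): one must verify (i) that after absorbing $A_{i_{2}}$ the operator $\hat{\Omega}$ is only ever applied to vectors of $\mathrm{colspan}(\hat{\Sigma})$, so that reading ``$\hat{\Sigma}^{-1}$'' as ``$\hat{\Sigma}^{+}$'' is both legitimate and inconsequential; and (ii) the pseudoinverse facts $(\calZ^{\top}\calZ)^{+}\calZ^{\top}=\calZ^{+}$ and that $\calZ\calZ^{+}$ is a symmetric idempotent, which justify the substitution. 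Both are routine once stated precisely; everything else is bookkeeping inside $\bbU_{n,2}[\cdot]$.
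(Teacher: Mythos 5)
Your first route is essentially the paper's own proof: the paper rewrites $\hat{\IIFF}_{2,2,k}(\hat{\Omega})$ as a quadratic form in the matrix $\sfZbar_{n,k}\bigl(\sfZbar_{n,k}^{\top}\sfZbar_{n,k}^{A}\bigr)^{-1}\sfZbar_{n,k}^{A\top}$, uses $A_i^2=A_i$ to identify the inner factor with $\bigl(\sfZbar_{n,k}^{A\top}\sfZbar_{n,k}^{A}\bigr)^{-1}$, and concludes via the SVD of $\sfZbar_{n,k}^{A}$ that only the left singular vectors $U^{A}$ survive. Two points of comparison are worth recording. First, you are more careful than the paper on exactly the step where care is needed: the paper asserts $\sfZbar_{n,k}\bigl(\sfZbar_{n,k}^{A\top}\sfZbar_{n,k}^{A}\bigr)^{-1}\sfZbar_{n,k}^{A\top}=\sfZbar_{n,k}^{A}\bigl(\sfZbar_{n,k}^{A\top}\sfZbar_{n,k}^{A}\bigr)^{-1}\sfZbar_{n,k}^{A\top}=U^{A}U^{A\top}$ without comment, whereas you correctly observe that the entrywise collapse to the projector only happens on treated rows ($A_{i_1}=1$); for untreated $i_1$ the entry is $n\,\sfzbar_k(X_{i_1})^{\top}\calZ^{+}e_{i_2}$, and since $\calZ^{+}$ carries the inverse singular values this residual dependence is not obviously spectrum-free. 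So your honesty here actually exposes a gap that the paper's proof papers over rather than introducing one of your own; to fully close it one must either argue that $\sfzbar_k(X_{i_1})$ may be replaced by its projection onto the row space of $\calZ$ without changing the estimator, or fall back on your second argument. Second, that invariance argument (congruence equivariance $\hat{\Omega}\mapsto M^{-\top}\hat{\Omega}M^{-1}$ under $\sfzbar_k\mapsto M\sfzbar_k$, plus Sylvester's law of inertia) is genuinely different from anything in the paper, and it is arguably the cleaner way to make the proposition precise: it shows the estimator depends on the dictionary only through its linear span, hence only on rank/inertia and not on the eigenvalues of $\hat{\Omega}$, and it handles the rank-deficient regime $k>\#\{i:A_i=1\}$ uniformly. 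Your Moore--Penrose treatment of the singular case is likewise an improvement, since that is precisely the regime ($k$ near $n$) motivating the proposition.
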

    
    For ease of exposition, in what follows we let
    \begin{itemize}
        \item $\sfZbar_{n, k} \coloneqq \left( \sfzbar_{k} (X_{1}), \cdots, \sfzbar_{k} (X_{n}) \right)^{\top}$ as the $n \times k$-matrix of the dictionary vectors for all $n$ samples;
        \item $\sfZbar_{n, k}^{A} \coloneqq \left( A_{1} \sfzbar_{k} (X_{1}), \cdots, A_{n} \sfzbar_{k} (X_{n}) \right)^{\top}$ as the $n \times k$-matrix of the $A$-weighted dictionary vectors for all $n$ samples;
        \item $\bm{\calE}_{n, a} (\hat{a}) \coloneqq \left( \calE_{a} (\hat{a}, O_{1}), \cdots, \calE_{a} (\hat{a}, O_{n}) \right)^{\top}$ and $\bm{\calE}_{n, b} (\hat{b}) \coloneqq \left( \calE_{b} (\hat{b}, O_{1}), \cdots, \calE_{b} (\hat{b}, O_{n}) \right)^{\top}$.
    \end{itemize}
    
    \begin{proof}
    We can rewrite $\hat{\IIFF}_{2, 2, k} (\hat{\Omega})$ as
    \begin{equation}
        \label{soif_v_stats}
        \hat{\IIFF}_{2, 2, k} (\hat{\Omega}) = \frac{1}{n - 1} \bm{\calE}_{n, a} (\hat{a})^{\top} \left[ \Id - \Diag \right] \left\{ \sfZbar_{n, k} \left( \sfZbar_{n, k}^{\top} \sfZbar_{n, k}^{A} \right)^{-1} \sfZbar_{n, k}^{A \top} \right\} \bm{\calE}_{n, b} (\hat{b}).
    \end{equation}
    Now apply Singular Value Decomposition (SVD) on the matrices $\sfZbar_{n, k}^{A}$:
    \begin{align*}
        \sfZbar_{n, k}^{A} = U^{A} \Diag (D^{A}) V^{A \top}.
    \end{align*}
    Then 
    \begin{align*}
        \sfZbar_{n, k} \left( \sfZbar_{n, k}^{\top} \sfZbar_{n, k}^{A} \right)^{-1} \sfZbar_{n, k}^{A \top} = \sfZbar_{n, k}^{A} \left( \sfZbar_{n, k}^{A \top} \sfZbar_{n, k}^{A} \right)^{-1} \sfZbar_{n, k}^{A \top} = U^{A} U^{A \top}.
    \end{align*}
    So
    \begin{align*}
        \eqref{soif_v_stats} = \frac{1}{n - 1} \bm{\calE}_{n, a} (\hat{a})^{\top} \left[ \Id - \Diag \right] \left\{ U^{A} U^{A \top} \right\} \bm{\calE}_{n, b} (\hat{b}),
    \end{align*}
    which is completely independent of the eigenvalues of $\hat{\Omega}$ ($(D^{A})^{2}$ up to constant).
    \end{proof}
    Hence it is not surprising that $\hat{\IIFF}_{2, 2, k} (\hat{\Omega})$ is numerically stable even when $k \rightarrow n$. 
    
    \begin{remark} 
    \label{rem:self normalization}
    In a sense, $\hat{\IIFF}_{2, 2, k} (\hat{\Omega})$ can be viewed as a {\it self-normalized} version of $\hat{\IIFF}_{2, 2, k}$. It is generally expected that self-normalized statistics could have better statistical properties than the non-self-normalized ones \citep{pena2008self}. However, whether the perspective of self-normalization is useful for establishing statistical properties of $\hat{\IIFF}_{2, 2, k} (\hat{\Omega})$ is still unclear to us and is worth pursuing as a research problem.
    \end{remark}
    
    Furthermore, not only does the alternative formula \eqref{soif_v_stats} of $\hat{\IIFF}_{2, 2, k} (\hat{\Omega})$ directly imply its numerical stability, but also it hints at the complexity of computing $\hat{\IIFF}_{2, 2, k} (\hat{\Omega})$. Barring the time complexity of SVD ($O (n k^{2})$), the time complexity of $\hat{\IIFF}_{2, 2, k} (\hat{\Omega})$ scales with $k n$ at a linear instead of a quadratic rate. This can be seen from \eqref{soif_v_stats}, in which only two vector-matrix products are involved, each taking $O (n k)$ operations. Thus we have
    \begin{proposition}
    The time complexity of computing $\hat{\IIFF}_{2, 2, k} (\hat{\Omega})$ is $O (n k^{2})$, dominated by that of SVD.
    \end{proposition}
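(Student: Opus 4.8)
The plan is to read the running time directly off the factored representation \eqref{soif_v_stats} together with the SVD reduction carried out in the proof of Proposition \ref{prop:stability}, and then account for the cost of each step. As shown there, the bracketed matrix in \eqref{soif_v_stats} equals the projector $U^{A}U^{A\top}$, where $\sfZbar_{n,k}^{A}=U^{A}\Diag(D^{A})V^{A\top}$ is a thin SVD with $U^{A}\in\bbR^{n\times r}$ and $r\le k\wedge n$. Assembling the data matrices $\sfZbar_{n,k}$ and $\sfZbar_{n,k}^{A}$ costs $O(nk)$, and computing the thin SVD of the $n\times k$ matrix $\sfZbar_{n,k}^{A}$ costs $O(nk^{2})$ in the regime $k\le n$; note that only $U^{A}$ is needed afterwards, as $\Diag(D^{A})$ and $V^{A}$ may be discarded.

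The one point requiring care is to associate the remaining products in \eqref{soif_v_stats} so that the $n\times n$ matrix $U^{A}U^{A\top}$ is never formed. Writing $[\Id-\Diag]\{U^{A}U^{A\top}\}=U^{A}U^{A\top}-D$, where $D\coloneqq\Diag(\Vert u_{1}\Vert_{2}^{2},\dots,\Vert u_{n}\Vert_{2}^{2})$ with $u_{i}$ the $i$-th row of $U^{A}$, one evaluates \eqref{soif_v_stats} as
\begin{equation*}
\frac{1}{n-1}\Big(\bm{\calE}_{n,a}(\hat{a})^{\top}U^{A}\big(U^{A\top}\bm{\calE}_{n,b}(\hat{b})\big)-\bm{\calE}_{n,a}(\hat{a})^{\top}D\,\bm{\calE}_{n,b}(\hat{b})\Big).
\end{equation*}
Here $U^{A\top}\bm{\calE}_{n,b}(\hat{b})$ is a length-$r$ vector costing $O(nk)$ to form, multiplying it by $U^{A}$ costs another $O(nk)$, the $n$ row norms defining $D$ cost $O(nk)$ in total, and the final entrywise product and two length-$n$ inner products cost $O(n)$. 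Hence every operation other than the SVD is $O(nk)$, and since $nk\le nk^{2}$ the total running time is $O(nk^{2})$, dominated by the SVD, as claimed.

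There is no genuine obstacle here; the argument is routine complexity bookkeeping, and the only pitfall is the association order just described. A naive implementation that first materializes $U^{A}U^{A\top}$, or the unfactored projector $\sfZbar_{n,k}(\sfZbar_{n,k}^{\top}\sfZbar_{n,k}^{A})^{-1}\sfZbar_{n,k}^{A\top}$, would instead incur $O(n^{2}k)$ time and $O(n^{2})$ storage, so the proof should state explicitly that the bound refers to the streamed evaluation above. It is worth remarking, finally, that for the oracle statistic $\hat{\IIFF}_{2,2,k}$ (the case $\Omega=\bbI$) no SVD is needed and the cost drops to $O(nk)$; the extra factor $k$ is precisely the computational price of the self-normalization observed in Remark \ref{rem:self normalization}.
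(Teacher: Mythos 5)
Your proposal is correct and follows essentially the same route as the paper: both read the cost off the factored form \eqref{soif_v_stats} after the SVD of $\sfZbar_{n,k}^{A}$, observing that everything besides the $O(nk^{2})$ SVD reduces to $O(nk)$ vector--matrix operations. You merely supply the bookkeeping the paper leaves implicit (in particular the association order that avoids materializing $U^{A}U^{A\top}$ and the explicit handling of the diagonal correction), which is a worthwhile clarification but not a different argument.
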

    
    \begin{remark}
    Another alternative way of arriving at the above conclusion is to observe that the $U$-statistic kernel of $\hat{\IIFF}_{2, 2, k} (\hat{\Omega})$, denoted as $\hat{\IF}_{2, 2, k, \bar{i}_{2}} (\hat{\Omega})$, is separable, in the following sense: there exists a pair (but not necessarily a unique pair) of functions $h_{1}, h_{2}$ such that
    \begin{align*}
        \hat{\IF}_{2, 2, k, \bar{i}_{2}} (\hat{\Omega}) \equiv h_{1} (O_{i_{1}}, \hat{\Omega}) \cdot h_{2} (O_{i_{2}}, \hat{\Omega}).
    \end{align*}
    \end{remark}

    \section{The hierarchy of sHOIF estimators}
    \label{sec:shoif}
    
    As indicated in Section \ref{sec:review}, the $m$-th order sHOIF estimator $\hat{\IIFF}_{m, m, k} (\hat{\Omega})$ takes the same form as the $m$-th order eHOIF estimator $\hat{\IIFF}_{m, m, k} (\hat{\Omega}_{\tr})$, with the sole difference that $\hat{\Omega}_{\tr}$ is replaced by $\hat{\Omega}$. Formally, the $m$-th order sHOIF and the corresponding $m$-th order estimator of $\psi (\theta)$ read as follows:
    \begin{equation}
        \label{shoif}
        \begin{split}
            \hat{\IIFF}_{m, m, k} (\hat{\Omega}) & \coloneqq (-1)^{m} \bbU_{n, m} \left[ \calE_{a} (\hat{a}; O_{1}) \sfzbar_{k} (X_{1})^{\top} \hat{\Omega} \prod_{s = 3}^{m} \left\{ \left( Q_{s} - \hat{\Sigma} \right) \hat{\Omega} \right\} \sfzbar_{k} (X_{2}) \calE_{b} (\hat{b}; O_{2}) \right], \\
            \hat{\psi}_{m, k} (\hat{\Omega}) & \coloneqq \hat{\psi}_{1} + \sum_{j = 2}^{m} \hat{\IIFF}_{j, j, k} (\hat{\Omega}) \equiv \hat{\psi}_{1} + \hat{\IIFF}_{(2, 2) \rightarrow (m, m), k} (\hat{\Omega}).
        \end{split}
    \end{equation}

    \begin{remark} \label{rem:form}
        The above sHOIF statistics are the same as the eHOIF statistics except that $\hat{\Omega}$ is constructed from the estimation sample instead of the nuisance sample.
    \end{remark}
    
    In this section, we first explain heuristically why $\hat{\IIFF}_{m, m, k} (\hat{\Omega})$ is enough to correct for the kernel estimation bias (see Section \ref{sec:explain}), after which the statistical, numerical and computational properties of $\hat{\IIFF}_{m, m, k} (\hat{\Omega})$ are stated formally.
    
    \subsection{Heuristic explanation}
    \label{sec:explain}
    
    In what follows we explain heuristically why the kernel estimation bias $\hat{\IIFF}_{2, 2, k} (\hat{\Omega})$ can be further corrected by adding:
    \begin{align*}
    \hat{\IIFF}_{3, 3, k} (\hat{\Omega}) \coloneqq \frac{n - 2}{n} \bbU_{n, 3} \left[ - \ \calE_{a} (\hat{a}; O_{1}) \sfzbar_{k} (X_{1})^{\top} \hat{\Omega} \left( Q_{3} - \hat{\Sigma} \right) \hat{\Omega} \sfzbar_{k} (X_{2}) \calE_{b} (\hat{b}; O_{2}) \right]
    \end{align*}
    and
    \begin{align*}
    \hat{\IIFF}_{4, 4, k} (\hat{\Omega}) \coloneqq \frac{n - 3}{n} \bbU_{n, 4} \left[ (A_{1} \hat{a} (X_{1}) - 1) \sfzbar_{k} (X_{1})^{\top} \hat{\Omega} \left( Q_{3} - \hat{\Sigma} \right) \hat{\Omega} \left( Q_{4} - \hat{\Sigma} \right) \hat{\Omega} \sfzbar_{k} (X_{2}) A_{2} (Y_{2} - \hat{b} (X_{2})) \right].
    \end{align*}
    For short, we define $\hat{\IIFF}_{(2, 2) \rightarrow (3, 3), k} (\hat{\Omega}) \coloneqq \hat{\IIFF}_{2, 2, k} (\hat{\Omega}) + \hat{\IIFF}_{3, 3, k} (\hat{\Omega})$ and $\hat{\IIFF}_{(2, 2) \rightarrow (4, 4), k} (\hat{\Omega}) \coloneqq \sum_{j = 2}^{4} \hat{\IIFF}_{j, j, k} (\hat{\Omega})$. Also note that the majority of this section is written for mathematical rigor.
    
    Simple algebra gives
    \begin{align*}
        & \ \hat{\IIFF}_{3, 3, k} (\hat{\Omega}) \\
        \equiv & - \frac{n - 2}{n} \frac{(n - 3)!}{n!} \sum_{1 \leq i_{1} \neq i_{2} \neq i_{3} \leq n} \calE_{a} (\hat{a}; O_{i_{1}}) \sfzbar_{k} (X_{i_{1}})^{\top} \hat{\Omega} \left( Q_{i_{3}} - \hat{\Sigma} \right) \hat{\Omega} \sfzbar_{k} (X_{i_{2}}) \calE_{b} (\hat{b}; O_{i_{2}}) \\
        = & \ \frac{1}{n} \bbU_{n, 2} \left[ \calE_{a} (\hat{a}; O_{1}) \sfzbar_{k} (X_{1})^{\top} \hat{\Omega} Q_{1, 2} \hat{\Omega} \sfzbar_{k} (X_{2}) \calE_{b} (\hat{b}; O_{2}) \right] - \frac{2}{n} \hat{\IIFF}_{2, 2, k} (\hat{\Omega}) \\
        \approx & \ \frac{1}{n} \bbU_{n, 2} \left[ \calE_{a} (\hat{a}; O_{1}) \sfzbar_{k} (X_{1})^{\top} \hat{\Omega} Q_{1, 2} \hat{\Omega} \sfzbar_{k} (X_{2}) \calE_{b} (\hat{b}; O_{2}) \right] \eqqcolon \tilde{\hat{\IIFF}}_{3, 3, k} (\hat{\Omega})
    \end{align*}
    and
    \allowdisplaybreaks
    \begin{align*}
        & \ \hat{\IIFF}_{4, 4, k} (\hat{\Omega}) \\
        \equiv & \ \frac{n - 3}{n} \frac{(n - 4)!}{n!} \sum_{1 \leq i_{1} \neq i_{2} \neq i_{3} \neq i_{4} \leq n} \calE_{a} (\hat{a}; O_{i_{1}}) \sfzbar_{k} (X_{i_{1}})^{\top} \hat{\Omega} \prod_{s = 3}^{4} \left[ \left( Q_{i_{s}} - \hat{\Sigma} \right) \hat{\Omega} \right] \sfzbar_{k} (X_{i_{2}}) \calE_{b} (\hat{b}; O_{i_{2}}) \\
        = & \ \frac{1}{n (n - 2)} \bbU_{n, 2} \left[ \calE_{a} (\hat{a}; O_{1}) \sfzbar_{k} (X_{1})^{\top} \hat{\Omega} Q_{1, 2} \hat{\Omega} Q_{1, 2} \hat{\Omega} \sfzbar_{k} (X_{2}) \calE_{b} (\hat{b}; O_{2}) \right] \\
        & - \frac{1}{n} \bbU_{n, 3} \left[ \calE_{a} (\hat{a}; O_{1}) \sfzbar_{k} (X_{1})^{\top} \hat{\Omega} Q_{3} \hat{\Omega} Q_{3} \hat{\Omega} \sfzbar_{k} (X_{2}) \calE_{b} (\hat{b}; O_{2}) \right] \\
        & - \frac{1}{n - 2} \left( 6 \hat{\IIFF}_{3, 3, k} (\hat{\Omega}) - \left( 1 - \frac{6}{n} \right) \hat{\IIFF}_{2, 2, k} (\hat{\Omega}) \right) \\
        \approx & \ \frac{1}{n^{2}} \bbU_{n, 2} \left[ \calE_{a} (\hat{a}; O_{1}) \sfzbar_{k} (X_{1})^{\top} \hat{\Omega} Q_{1, 2} \hat{\Omega} Q_{1, 2} \hat{\Omega} \sfzbar_{k} (X_{2}) \calE_{b} (\hat{b}; O_{2}) \right] \\
        & - \frac{1}{n} \bbU_{n, 3} \left[ \calE_{a} (\hat{a}; O_{1}) \sfzbar_{k} (X_{1})^{\top} \hat{\Omega} Q_{3} \hat{\Omega} Q_{3} \hat{\Omega} \sfzbar_{k} (X_{2}) \calE_{b} (\hat{b}; O_{2}) \right] \\
        \eqqcolon & \ \tilde{\hat{\IIFF}}_{4, 4, k} (\hat{\Omega}).
    \end{align*}
    
    First, observe that the expectation of the oracle version of $\tilde{\hat{\IIFF}}_{3, 3, k} (\hat{\Omega})$
    \begin{align*}
        \bbE \left[ \tilde{\hat{\IIFF}}_{3, 3, k} \right] = \frac{1}{n} \bbE \left[ \calE_{a} (\hat{a}; O_{1}) \sfzbar_{k} (X_{1})^{\top} Q_{1, 2} \sfzbar_{k} (X_{2}) \calE_{b} (\hat{b}; O_{2}) \right]
    \end{align*}
    exactly cancels \eqref{bias_j1}, the leading-order part of the kernel estimation bias of $\hat{\IIFF}_{2, 2, k} (\hat{\Omega})$ corresponding to $j = 1$.
    
    Next, observe that the expectation of the oracle version of $\tilde{\hat{\IIFF}}_{4, 4, k} (\hat{\Omega})$ is
    \begin{align*}
        \bbE \left[ \tilde{\hat{\IIFF}}_{4, 4, k} \right] = & \ \frac{1}{n^{2}} \bbE \left[ \calE_{a} (\hat{a}; O_{1}) \sfzbar_{k} (X_{1})^{\top} Q_{1, 2}^{2} \sfzbar_{k} (X_{2}) \calE_{b} (\hat{b}; O_{2}) \right] \\
        & - \frac{1}{n} \bbE \left[ \calE_{a} (\hat{a}; O_{1}) \sfzbar_{k} (X_{1})^{\top} \right] \bbE [Q_{3}^{2}] \bbE \left[ \sfzbar_{k} (X_{2}) \calE_{b} (\hat{b}; O_{2}) \right]
    \end{align*}
    which again cancels the kernel estimation bias of $\hat{\IIFF}_{(2, 2) \rightarrow (3, 3), k} (\hat{\Omega})$ truncated at $j = 2$, dominated by
    \begin{equation}
        \label{toif_eb}
        - \frac{1}{n^{2}} \bbE \left[ \calE_{a} (\hat{a}; O_{1}) \sfzbar_{k} (X_{1})^{\top} Q_{1, 2}^{2} \sfzbar_{k} (X_{2}) \calE_{b} (\hat{b}; O_{2}) \right] + \frac{1}{n} \bbE \left[ \calE_{a} (\hat{a}; O_{1}) \sfzbar_{k} (X_{1})^{\top} \right] \bbE [Q_{3}^{2}] \bbE \left[ \sfzbar_{k} (X_{2}) \calE_{b} (\hat{b}; O_{2}) \right]
    \end{equation}
    which can be derived from \eqref{bias_j2_1}, \eqref{bias_j2_2} and the kernel estimation bias of $\hat{\IIFF}_{3, 3, k} (\hat{\Omega})$ truncated at level $j = 1$; see Appendix \ref{app:toif_eb} for a more detailed calculation. Hence $\hat{\IIFF}_{(3, 3) \rightarrow (4, 4), k} (\hat{\Omega})$ further reduces the kernel estimation bias of $\hat{\IIFF}_{2, 2, k} (\hat{\Omega})$. 
    
    \subsection{Characterization of the bias and variance of the sHOIF estimators}
    \label{sec:properties}
    
    We now state the main theoretical result of this paper.
    \begin{theorem}
    \label{thm:properties}
    Under Assumptions \ref{cond:nuis} -- \ref{cond:b}, with $k \lesssim \frac{n}{\log^{2} n}$ and $m \gtrsim \sqrt{\log n}$, one has the following:
    \begin{enumerate}[label = {\normalfont(\roman*)}]
    \item The kernel estimation bias of $\hat{\IIFF}_{(2, 2) \rightarrow (m, m), k} (\hat{\Omega})$ satisfies
    \begin{equation} \label{EBm}
        \begin{split}
        & \EB_{m, k} (\hat{\psi}_{1}) \coloneqq \bbE \left[ \hat{\IIFF}_{(2, 2) \rightarrow (m, m), k} (\hat{\Omega}) \right] - \Bias_{\theta, k} (\hat{\psi}_{1}) \equiv \bbE \left[ \hat{\IIFF}_{(2, 2) \rightarrow (m, m), k} (\hat{\Omega}) - \hat{\IIFF}_{2, 2, k} \right] \\
        & \lesssim \left( \frac{k m}{n} \right)^{\lceil \frac{\lceil \frac{m - 1}{2} \rceil - 1}{2} \rceil \vee 1} \left\{ \begin{array}{c}
        \left\Vert \dfrac{\hat{a} - 1}{a} \right\Vert_{2} \Vert \hat{b} - b \Vert_{2} + \left\Vert \dfrac{\hat{a} - a}{a} \right\Vert_{2} \Vert \hat{b} - b \Vert_{2} \\ 
        + \left( \left\Vert \dfrac{\hat{a} - 1}{a} \right\Vert_{2} \left\Vert \hat{b} - b \right\Vert_{\infty} \wedge \left\Vert \dfrac{\hat{a} - 1}{a} \right\Vert_{\infty} \left\Vert \hat{b} - b \right\Vert_{2} \right)
        \end{array} \right\}.
        \end{split}
    \end{equation}
    \item For $m \geq 2$, the variance of $\hat{\IIFF}_{m, m, k} (\hat{\Omega})$ satisfies
    \begin{equation} \label{var_if_m}
        \begin{split}
        \var \left[ \hat{\IIFF}_{m, m, k} (\hat{\Omega}) \right] \lesssim \frac{1}{n} \left\{ \frac{k}{n} + \left( \left\Vert \frac{\hat{a} - 1}{a} \right\Vert_{2} \left\Vert \hat{b} - b \right\Vert_{\infty} \wedge \left\Vert \frac{\hat{a} - 1}{a} \right\Vert_{2} \left\Vert \hat{b} - b \right\Vert_{\infty} \right) \right\}.
        \end{split}
    \end{equation}
    And thus the variance of $\hat{\IIFF}_{(2, 2) \rightarrow (m, m), k} (\hat{\Omega})$ satisfies
    \begin{equation} \label{var_if_2_m}
        \begin{split}
        \var \left[ \hat{\IIFF}_{(2, 2) \rightarrow (m, m), k} (\hat{\Omega}) \right] \lesssim \frac{1}{n} \left\{ \frac{k}{n} + \left( \left\Vert \frac{\hat{a} - 1}{a} \right\Vert_{2} \left\Vert \hat{b} - b \right\Vert_{\infty} \wedge \left\Vert \frac{\hat{a} - 1}{a} \right\Vert_{2} \left\Vert \hat{b} - b \right\Vert_{\infty} \right) \right\}.
        \end{split}
    \end{equation}
    \end{enumerate}
    \end{theorem}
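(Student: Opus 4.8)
The plan is to lift the warm-up computations of Section \ref{sec:proof_idea_soif_eb} (bias) and Section \ref{sec:proof_idea_soif_var} (variance) from $m = 2$ --- i.e.\ Proposition \ref{prop:soif} --- to general $m$ in the stated regime $\sqrt{\log n} \lesssim m \lesssim \log n$, $k \lesssim n/\log^{2} n$. Throughout I invoke Assumption \ref{cond:b}(ii) to set $\Sigma = \Omega = \Id$ without loss of generality and condition on the frozen nuisance sample, so that $\hat a, \hat b$ are deterministic.

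For the kernel estimation bias in part (i), I would first rewrite $\hat{\IIFF}_{(2,2)\to(m,m),k}(\hat\Omega) - \hat{\IIFF}_{2,2,k}$ and Taylor-expand every occurrence of $\hat\Omega$ around $\Id$ by the truncated Neumann identity $\Id - \hat\Omega = \sum_{\ell = 1}^{J-1}(\hat\Sigma - \Id)^{\ell} + (\hat\Sigma - \Id)^{J}\hat\Omega$ with $J \asymp \log n$; under $k \lesssim n/\log^{2} n$ the operator norm of $\hat\Sigma - \Id$ concentrates at $o(1)$ (indeed $O((\log n)^{-1/2})$) by Rudelson-type matrix concentration, so the tail term carrying $(\hat\Sigma - \Id)^{J}\hat\Omega$ is $o(n^{-1/2})$. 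For each retained power, and for each higher-order summand $\hat{\IIFF}_{j,j,k}(\hat\Omega)$ with $3 \le j \le m$, I perform a leave-two-out reduction of the $U$-statistic: split $\hat\Sigma = \hat\Sigma^{\dag} + n^{-1} Q_{1,2}$ with $\hat\Sigma^{\dag}$ depending only on estimation-sample points outside $\{1,2\}$, and expand the resulting non-commutative powers via a corollary of the binomial theorem for associative algebras (the combinatorial identity flagged in Section \ref{sec:novelty}). This produces, for every term, a ``sandwich'' $\sfzbar_{k}(X_{1})^{\top}[\,\cdots\,]\sfzbar_{k}(X_{2})$ whose interior is a non-commutative product of $n^{-1}Q_{1,2}$, of $\hat\Sigma^{\dag} - \Id$ (expectation $-2n^{-1}\Id$, centred fluctuation of operator norm $O(\sqrt{k/n})$), and of $\hat\Omega$, generalizing \eqref{bias_j1} and the $j = 3,4$ algebra of Section \ref{sec:explain}.

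Two ingredients then finish part (i). First, a \emph{cancellation bookkeeping}: $\hat{\IIFF}_{3,3,k}(\hat\Omega),\ldots,\hat{\IIFF}_{m,m,k}(\hat\Omega)$ are built precisely so that their oracle expectations annihilate, level by level, the lowest-order-in-$n^{-1}$ contributions to $\bbE[\hat{\IIFF}_{2,2,k}(\hat\Omega) - \hat{\IIFF}_{2,2,k}]$ --- $\bbE[\tilde{\hat{\IIFF}}_{3,3,k}]$ removes the level-$1$ piece \eqref{bias_j1}, $\bbE[\tilde{\hat{\IIFF}}_{4,4,k}]$ removes the surviving level-$2$ piece \eqref{toif_eb} of $\hat{\IIFF}_{(2,2)\to(3,3),k}(\hat\Omega)$, and by induction on $m$ each additional order clears one more sub-level. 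Because these corrections are themselves feasible, their own kernel biases re-enter and must be re-expanded against $\Id - \hat\Omega$, roughly halving the number of cleared sub-levels a second time; combined with the fact that the leading contribution at matrix-expansion level $\ell$ is already of order $(k/n)^{\lceil \ell/2\rceil}$ rather than $(k/n)^{\ell}$, this double halving is the source of the exponent $\lceil \frac{\lceil \frac{m-1}{2}\rceil - 1}{2}\rceil \vee 1$ in \eqref{EBm}. Second, a \emph{size bound} for the survivors: by Assumption \ref{cond:b}(i),(iii), a length-$\ell$ non-commutative product sandwiched between $\sfzbar_{k}$'s contributes $O(k^{\ell - 1})$ when its interior indices are ``well-ordered'' and $O(k^{\ell})$ when they are not --- the latter forcing one nuisance factor into $L_{\infty}$-norm --- which is exactly the dichotomy of Lemma \ref{lem:nc} pushed to arbitrary length; matching these $k$-powers against the $n^{-1}$-prefactor count, together with triangle and Cauchy--Schwarz inequalities and Assumption \ref{cond:nuis}, yields the three-term nuisance structure of \eqref{EBm}.

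For the variance in part (ii), I use the Hoeffding/ANOVA decomposition of the $m$-th order $U$-statistic $\hat{\IIFF}_{m,m,k}(\hat\Omega)$. Its rank-one projection term collapses, after iterated conditional expectations, to an average of $\calE_{a}(\hat a; O_{1})\sfzbar_{k}(X_{1})^{\top}\hat\Omega(\cdots)$, whose variance is $\lesssim n^{-1}\bigl(\left\Vert \tfrac{\hat a - 1}{a}\right\Vert_{2}\Vert \hat b - b\Vert_{\infty}\bigr)$, the second term of \eqref{var_if_m} --- the $L_{\infty}$ arising from the same non-commutative ordering effect as in the bias. The fully degenerate (quadratic-type) term has variance $\lesssim k/n^{2}$, and its control reduces to \eqref{var-bound-key} and its generalization with $(Q_{s}-\hat\Sigma)\hat\Omega$ insertions; these are handled exactly as sketched for $m = 2$ --- a ghost-copy substitution $Q_{1},Q_{2}\mapsto Q_{1}',Q_{2}'$, a matrix-Taylor expansion of $\hat\Omega - \hat\Omega'$ truncated at $J \asymp \log n$, and Lemma \ref{lem:nc} --- with each extra insertion costing a geometric factor $(2k/n)^{\ell - 1}$ that is summable because $k \lesssim n/\log^{2} n$. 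Summing the at most $2^{m}$ ANOVA components (geometrically decaying in $k/n$) gives \eqref{var_if_m}, and \eqref{var_if_2_m} follows by Cauchy--Schwarz over $j = 2,\ldots,m$, the extra factor $m \lesssim \sqrt{\log n}$ being absorbed into $\lesssim$. I expect the main obstacle to be the bias bookkeeping of the third paragraph: keeping track of exactly which terms cancel across the nested re-expansions of the feasible corrections, and thereby pinning down the precise exponent, requires the delicate binomial-identity combinatorics that Section \ref{sec:novelty} singles out as the central technical innovation, whereas the variance bound, though laborious, follows the $m = 2$ template.
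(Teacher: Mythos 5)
Your outline assembles the right ingredients and in the same order as the paper's Appendix \ref{app:main}: Neumann expansion of $\hat{\Omega}$ truncated at $J \asymp \log n$, leave-out decorrelation of $\hat{\Sigma}$, non-commutative binomial combinatorics, and the dichotomy of Lemma \ref{lem:nc} (well-ordered products cost $k^{\ell-1}$, ill-ordered ones cost $k^{\ell}$ and force an $L_{\infty}$ norm). You even correctly intuit that the exponent $\lceil \frac{\lceil \frac{m-1}{2}\rceil - 1}{2}\rceil \vee 1$ arises from two separate halvings. But the decisive step --- proving that the low-order terms actually cancel for general $m$ --- is exactly the piece you defer to ``delicate binomial-identity combinatorics,'' and the route you sketch for it is not the one that works. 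You propose an induction on $m$ in which each added order $\hat{\IIFF}_{j,j,k}(\hat{\Omega})$ ``clears one more sub-level''; that is the heuristic of Section \ref{sec:explain} for $m = 3, 4$, and it does not close for general $m$ because every feasible correction re-injects its own kernel bias at all lower levels simultaneously, so the inductive hypothesis cannot be stated level by level. The paper instead abandons induction entirely: Lemmas \ref{lem:faux_cp} and \ref{lem:real_cp} reorganize the \emph{entire} sum over orders $j = 1, \dots, m-1$ by the copy number $\cp$ of $\bbI - \hat{\Sigma}$ factors appearing after the Neumann expansion, Lemma \ref{lem:expansion} expands each such factor into $n^{-1}\sum_{i}(\bbI - Q_{i})$, and the cancellation Lemma \ref{cancellation lemma} shows that the scalar coefficient multiplying each surviving expectation is
\begin{equation*}
\frac{(-1)^{-\cp^{\dag}}}{\cp^{\dag}!\,\cp!} \sum_{j=0}^{m-1} (-1)^{j} \binom{m-1}{j}\, P(j)
\end{equation*}
for a polynomial $P$ of degree $\cp + \cp^{\dag} \leq 2\cp$, which vanishes identically whenever $2\cp < m - 1$ by repeated differentiation of the binomial identity. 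That global vanishing of alternating binomial sums --- not a per-order cancellation --- is what kills all terms with $\cp < \lceil (m-1)/2\rceil$; the surviving terms are then counted via another alternating-binomial identity and bounded by $(km/n)^{\lceil(\cp-1)/2\rceil \vee 1}$ in Lemma \ref{well control lemma}. Without this reorganization-by-copy-number device, your bias argument has a genuine gap at precisely the point you flag as the main obstacle.

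On the variance, your plan (Hoeffding decomposition, ghost copies, matrix-Taylor expansion of $\hat{\Omega} - \hat{\Omega}'$, Lemma \ref{lem:nc} with the geometric factor $(2k/n)^{\ell-1}$) matches the template the paper establishes for $m = 2$ in Section \ref{sec:proof_idea_soif_var} and defers to the supplements for general $m$, so that part is a reasonable and faithful extension, modulo the same caveat that the degenerate components of order $j \geq 3$ require tracking the additional $(Q_{s} - \hat{\Sigma})\hat{\Omega}$ insertions through the ghost-copy argument rather than merely asserting summability.
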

    
    The proof of the above theorem can be found in Appendix \ref{app:main} (for kernel estimation bias bound) and the online supplements (for variance bound).
    
    \begin{remark}[Asymptotic normality and the bootstrap approximation]
    As shown in \citet{liu2020nearly}, the asymptotic normality of the oracle statistic $\frac{\hat{\IIFF}_{2, 2, k} - \Bias_{\theta, k} (\hat{\psi}_{1})}{\se_{\theta} (\hat{\IIFF}_{2, 2, k})}$ follows from Theorem 1 of \citet{bhattacharya1992class} whence $1 \ll k \ll n^{2}$. Thus to show CLT of $\hat{\IIFF}_{(2, 2) \rightarrow (m, m), k} (\hat{\Omega})$ for any $m \geq 2$, it is sufficient to demonstrate under what conditions $\EB_{m, k} (\hat{\Omega}) \ll \se_{\theta} (\hat{\IIFF}_{2, 2, k})$. Bootstrap approximation (and its rate) of the distribution of $\hat{\IIFF}_{(2, 2) \rightarrow (m, m), k} (\hat{\Omega})$, or even of $\hat{\IIFF}_{2, 2, k}$ is still an important open problem, though \citet{liu2021assumption} have made some partial progress. A more thorough study of the conditions under which central limit theorem (CLT) or bootstrap approximation holds is beyond the scope of this paper.
    \end{remark}
    
    \subsection{Numerical stability and time complexity of sHOIF estimators}
    \label{sec:numerical}
	
    In what follows we consider the numerical and computational properties of sHOIF estimators, which extends the results in Section \ref{sec:numerical_soif} to higher-order. The first result in this section, Theorem \ref{thm:stability}, earmarks the ``stability'' of sHOIF estimators in terms of their independence of the eigenvalues of $\hat{\Omega}$, the root cause of the instability of eHOIF estimators.
	
    \begin{theorem} \label{thm:stability}
    $\hat{\IIFF}_{m, m, k} (\hat{\Omega})$ does not depend on the eigenvalues of $\hat{\Omega}$.
    \end{theorem}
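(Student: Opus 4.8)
The plan is to lift the proof of Proposition~\ref{prop:stability} from second order to arbitrary order $m$. Fix the estimation sample throughout. Since $A_{i} \in \{0, 1\}$ we have $\hat{\Sigma} = n^{-1} \sfZbar_{n, k}^{A \top} \sfZbar_{n, k}^{A} = n^{-1} \sfZbar_{n, k}^{\top} \sfZbar_{n, k}^{A}$, hence $\hat{\Omega} = n ( \sfZbar_{n, k}^{A \top} \sfZbar_{n, k}^{A} )^{-1}$ (read as a pseudoinverse if $\hat{\Sigma}$ is singular; the argument is unchanged), and writing the SVD $\sfZbar_{n, k}^{A} = U^{A} \Diag (D^{A}) V^{A \top}$ the eigenvalues of $\hat{\Omega}$ are exactly $n / (D^{A})^{2}$. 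So the claim is equivalent to showing that, once unwound, $\hat{\IIFF}_{m, m, k} (\hat{\Omega})$ depends on $\hat{\Omega}$ only through the orthogonal projector $U^{A} U^{A \top}$ onto $\mathrm{col} ( \sfZbar_{n, k}^{A} )$, a quantity that carries no information about $D^{A}$.

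The first step is to kill every occurrence of $\hat{\Sigma}$ in the $U$-statistic kernel of $\hat{\IIFF}_{m, m, k} (\hat{\Omega})$ via the identity $\hat{\Omega} \hat{\Sigma} = \hat{\Sigma} \hat{\Omega} = \Id$, that is, $\hat{\Omega} \hat{\Sigma} \hat{\Omega} = \hat{\Omega}$. Expanding $\hat{\Omega} \prod_{s = 3}^{m} \{ ( Q_{i_{s}} - \hat{\Sigma} ) \hat{\Omega} \}$ with this identity produces a signed sum, indexed by ordered subsets $S = \{ j_{1} < \cdots < j_{r} \} \subseteq \{ i_{3}, \dots, i_{m} \}$, of monomials $(-1)^{(m - 2) - |S|}\, \hat{\Omega} Q_{j_{1}} \hat{\Omega} \cdots Q_{j_{r}} \hat{\Omega}$, the empty subset contributing $\hat{\Omega}$ itself. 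Hence $\hat{\IIFF}_{m, m, k} (\hat{\Omega})$ is a fixed signed combination of $U$-statistics with kernels of the form $\calE_{a} (\hat{a}; O_{i_{1}})\, \sfzbar_{k} (X_{i_{1}})^{\top} \hat{\Omega} Q_{j_{1}} \hat{\Omega} \cdots Q_{j_{r}} \hat{\Omega}\, \sfzbar_{k} (X_{i_{2}})\, \calE_{b} (\hat{b}; O_{i_{2}})$. Substituting $Q_{j} = A_{j} \sfzbar_{k} (X_{j}) \sfzbar_{k} (X_{j})^{\top}$ factors each kernel into the scalar $\calE_{a} (\hat{a}; O_{i_{1}}) \calE_{b} (\hat{b}; O_{i_{2}})$ times a product of $r + 1$ blocks of the shape $\sfzbar_{k} (X_{p})^{\top} \hat{\Omega}\, A_{q} \sfzbar_{k} (X_{q})$, where the needed right $A$-weight is supplied either by the leading $A_{j_{\ell + 1}}$ of the next factor $Q_{j_{\ell + 1}}$ or, for the last block, by the $A_{i_{2}}$ hidden inside $\calE_{b} (\hat{b}; O_{i_{2}})$ (using $A^{2} = A$). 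Each such block is an entry of $\sfZbar_{n, k} \hat{\Omega} \sfZbar_{n, k}^{A \top} = n\, \sfZbar_{n, k} ( \sfZbar_{n, k}^{A \top} \sfZbar_{n, k}^{A} )^{-1} \sfZbar_{n, k}^{A \top}$, which, by the computation in the proof of Proposition~\ref{prop:stability} (via $( \sfZbar_{n, k}^{A \top} \sfZbar_{n, k}^{A} )^{-1} \sfZbar_{n, k}^{A \top} = ( \sfZbar_{n, k}^{A} )^{+} = V^{A} \Diag (D^{A})^{-1} U^{A \top}$ and $\sfZbar_{n, k}^{A} ( \sfZbar_{n, k}^{A} )^{+} = U^{A} U^{A \top}$), sees $\hat{\Omega}$ only through $U^{A} U^{A \top}$ and is therefore unaffected by perturbing $D^{A}$. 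Reassembling the fixed combinatorial weights (which depend only on $n$ and $m$) and averaging over index tuples preserves this property, which is the assertion.

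I expect the main obstacle to be combinatorial bookkeeping rather than any new idea: one must track which inner indices $i_{3}, \dots, i_{m}$ survive into $S$, check that the distinctness constraints of the order-$m$ $U$-statistic are inherited correctly by the lower-order $U$-statistics produced, and — the one genuinely delicate point — confirm that an $A$-weight can always be transferred onto the correct side of every block. A cleaner alternative is induction on $m$ through the exact versions of the recursions in Section~\ref{sec:explain}, which write $\hat{\IIFF}_{m, m, k} (\hat{\Omega})$ as a combination of the $\hat{\IIFF}_{j, j, k} (\hat{\Omega})$ with $j < m$ (eigenvalue-free by the inductive hypothesis) plus a feasible $V$-statistic $\frac{1}{n^{m - 2}} \bbU_{n, 2} [ \calE_{a} (\hat{a}; O_{1}) \sfzbar_{k} (X_{1})^{\top} \hat{\Omega} ( Q_{1, 2} \hat{\Omega} )^{m - 2} \sfzbar_{k} (X_{2}) \calE_{b} (\hat{b}; O_{2}) ]$ together with analogous terms in which $Q_{1, 2}$ is replaced by a single $Q_{3}$; expanding $Q_{1, 2} = Q_{1} + Q_{2}$ and then $Q_{s} = A_{s} \sfzbar_{k} (X_{s}) \sfzbar_{k} (X_{s})^{\top}$ again collapses every factor to an entry of $\sfZbar_{n, k} \hat{\Omega} \sfZbar_{n, k}^{A \top} = n\, U^{A} U^{A \top}$.
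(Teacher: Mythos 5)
Your strategy is the same as the paper's: the published proof of Theorem \ref{thm:stability} is a two-line appeal to the computation in Proposition \ref{prop:stability}, asserting that every bilinear block of the form $\sfzbar_{k}(X_{p})^{\top}\hat{\Omega}\,A_{q}\sfzbar_{k}(X_{q})$ is an entry of $U^{A}U^{A\top}$ and hence free of the spectrum. Your expansion of $\hat{\Omega}\prod_{s=3}^{m}\{(Q_{s}-\hat{\Sigma})\hat{\Omega}\}$ via $\hat{\Sigma}\hat{\Omega}=\Id$ into signed chains $\hat{\Omega}Q_{j_{1}}\hat{\Omega}\cdots Q_{j_{r}}\hat{\Omega}$, and the factorization of each chain into rank-one blocks, is correct and matches the paper's alternative characterization \eqref{hoif_alternative}; the idempotence $A^{2}=A$ does let you weight both sides of every \emph{interior} block and the right end, where $\calE_{b}(\hat{b};O_{i_{2}})=A_{i_{2}}(Y_{i_{2}}-\hat{b}(X_{i_{2}}))$ supplies the missing $A_{i_{2}}$.

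However, the point you flag as ``genuinely delicate'' is not resolved by the computation you cite, and it is the actual crux. The leftmost block $\sfzbar_{k}(X_{i_{1}})^{\top}\hat{\Omega}\,A_{j_{1}}\sfzbar_{k}(X_{j_{1}})$ has no $A_{i_{1}}$-weight available: $\calE_{a}(\hat{a};O_{i_{1}})=A_{i_{1}}\hat{a}(X_{i_{1}})-1$ does not factor through $A_{i_{1}}$ and equals $-1\neq 0$ on units with $A_{i_{1}}=0$, so those rows contribute. That block is therefore an entry of $\sfZbar_{n,k}\hat{\Omega}\sfZbar_{n,k}^{A\top}=n\,\sfZbar_{n,k}V^{A}\Diag(D^{A})^{-1}U^{A\top}$, not of $\sfZbar_{n,k}^{A}\hat{\Omega}\sfZbar_{n,k}^{A\top}=n\,U^{A}U^{A\top}$: rows with $A_{i_{1}}=1$ collapse to the corresponding rows of $U^{A}U^{A\top}$, but rows with $A_{i_{1}}=0$ retain an explicit $\Diag(D^{A})^{-1}$. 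Your proposal (like the paper's own proof, whose displayed identity quietly places the row weight $A_{i}$ where the estimator does not supply one) thus establishes eigenvalue-independence only for the interior of the chain; a complete argument must separately dispose of the unweighted left factor $\sfzbar_{k}(X_{i_{1}})^{\top}\hat{\Omega}$, or the claim must be reformulated to exclude it.
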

	
    \begin{proof}
    The proof resembles the proof of Proposition \ref{prop:stability} closely by realizing that, for any $i, j \in [n]$,
    \begin{align*}
	A_{i} \sfzbar_{k} (X_{i})^{\top} \hat{\Omega} \sfzbar_{k} (X_{j}) = U^{A}_{i, \bullet} U^{A \top} U U_{j, \bullet},
    \end{align*}
    which is completely independent of the eigenvalues of $\hat{\Omega}$.
    \end{proof}
    Hence sHOIF estimators do not suffer from any numerical instability resulted from the large condition number of the sample Gram matrix when we let $k$ near $n$ in practice.

    \begin{remark}
    \label{rem:algorithm}
    Theorem \ref{thm:stability} also suggests a better way to compute sHOIF estimators. Instead of computing the sample Gram matrix $\hat{\Sigma}$ and its inverse $\hat{\Omega}$ using numerical methods, we should instead perform SVD on the basis matrices $\sfZbar_{n, k}$ and $\sfZbar_{n, k}^{A}$ and then compute $\hat{\IIFF}_{m, m, k} (\hat{\Omega})$. In fact, the upcoming R package \citep{wanis2023machine} for computing HOIF related statistics exactly uses this strategy.
    \end{remark}

    Since sHOIF estimators are numerically stable and thus are potentially useful tools for statistical practice \citep{liu2020nearly, wanis2023machine}, it is worth discussing the computational complexity of sHOIF estimators for general order $m$ as well.
	
    \begin{theorem} \label{thm:time}
    The time complexity of computing $\hat{\IIFF}_{m, m, k} (\hat{\Omega})$ is $O (\max \{(n k)^{\lceil (m - 1) / 2 \rceil}, n k^{2}\})$.
    \end{theorem}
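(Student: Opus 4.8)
The plan is to exploit the numerically stable matrix representation underlying Theorem~\ref{thm:stability} together with the algebraic reduction of $\hat{\IIFF}_{m, m, k} (\hat{\Omega})$ to \emph{low-order} $U$-statistics — the general-$m$ version of the passages $\hat{\IIFF}_{3, 3, k} (\hat{\Omega}) \approx \tilde{\hat{\IIFF}}_{3, 3, k} (\hat{\Omega})$ and $\hat{\IIFF}_{4, 4, k} (\hat{\Omega}) \approx \tilde{\hat{\IIFF}}_{4, 4, k} (\hat{\Omega})$ of Section~\ref{sec:explain} — and then to account for the cost of each resulting piece. \textbf{Step 1 (stable representation).} Exactly as in the proof of Proposition~\ref{prop:stability} and Remark~\ref{rem:algorithm}, we never form $\hat{\Sigma}$ or $\hat{\Omega}$; instead we compute the thin SVDs of the $n \times k$ matrices $\sfZbar_{n, k}$ and $\sfZbar_{n, k}^{A}$ once, at cost $O (n k^{2})$. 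Afterwards every bilinear form $A_{i} \sfzbar_{k} (X_{i})^{\top} \hat{\Omega} \sfzbar_{k} (X_{j})$ appearing in the kernels below — and its one-sided/unweighted variants — equals an inner product $\langle u_{i}, u_{j} \rangle$ of rows of the left singular matrix $U^{A}$; consequently a single entry costs $O (k)$, the associated rank-$\le k$ ``hat matrix'' $H$ (with $H_{ij} = \langle u_{i}, u_{j} \rangle$) admits $v \mapsto H v$ in time $O (nk)$, and all diagonal entries $H_{ii}$ are obtained in $O (nk)$.

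\textbf{Step 2 (reduction to low-order $U$-statistics).} Expanding every factor $Q_{s} - \hat{\Sigma}$ in \eqref{shoif} by writing $Q_{s} = A_{s} \sfzbar_{k} (X_{s}) \sfzbar_{k} (X_{s})^{\top}$ (rank one) and collapsing the $\hat{\Sigma}$ pieces through $\hat{\Sigma} \hat{\Omega} = \hat{\Omega} \hat{\Sigma} = \Id$, the $U$-statistic kernel of $\hat{\IIFF}_{m, m, k} (\hat{\Omega})$ becomes a signed sum of products of the bilinear forms of Step~1 running along a walk $1 \to \cdots \to 2$ on a subset of $\{ 1, \dots, m \}$, with the weights $\calE_{a} (\hat{a}; O_{1})$ and $\calE_{b} (\hat{b}; O_{2})$ at the two ends. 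The distinctness constraint of the $U$-statistic is then discharged by the telescoping identity $\sum_{s} (Q_{i_{s}} - \hat{\Sigma}) = 0$ applied successively to the $m - 2$ ``middle'' indices $i_{3}, \dots, i_{m}$: summing out a middle index that occupies a single $Q$-slot either kills the term or merges that index into one of the remaining indices. A middle index that survives this process must occupy at least two of the $m - 2$ $Q$-slots, so at most $\lfloor (m - 2)/2 \rfloor = \lceil (m - 1)/2 \rceil - 1$ of them survive. Together with the two endpoint indices, this exhibits $\hat{\IIFF}_{m, m, k} (\hat{\Omega})$ as a linear combination — with a number of summands, and with coefficients rational in $n$, both depending only on $m$ — of $U$-statistics $\bbU_{n, p}$ of order $p \le \lceil (m - 1)/2 \rceil + 1$ whose kernels are products of the rank-$k$ bilinear forms along a walk on the $p$ indices.

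\textbf{Step 3 (cost of each piece, and conclusion).} Fix one such order-$p$ $U$-statistic. Resolving its own distinctness by inclusion--exclusion over set partitions of $[p]$ turns it into $V$-statistics over the super-indices, each being a tensor-network contraction on a walk with $k$-dimensional bonds; contracting one super-index at a time against the factors $U^{A}$ of Step~1 — at the two ends the contraction is a single $H$-multiply ($O (nk)$), and each remaining super-index carries a tensor of bounded order in the $k$-dimensional feature — shows each order-$p$ $U$-statistic is computable in $O\big( (nk)^{p - 1} \big)$. Summing over the $O_{m} (1)$ pieces of Step~2, for which $p \le \lceil (m - 1)/2 \rceil + 1$, and adding the $O (nk^{2})$ SVD cost of Step~1 yields total time $O\big( (nk)^{\lceil (m - 1)/2 \rceil} \big) + O (nk^{2}) = O\big( \max \{ (nk)^{\lceil (m - 1)/2 \rceil}, \, nk^{2} \} \big)$; for $m = 2, 3$ this collapses to the $O (nk^{2})$ already recorded in Section~\ref{sec:numerical_soif}.

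\textbf{Main obstacle.} The load-bearing step is Step~2: showing that the telescoping-and-absorbing procedure really pushes the order all the way down from $m$ to $\lceil (m - 1)/2 \rceil + 1$ (i.e. that every surviving middle index is forced to ``double up'' on $Q$-slots), and that the surviving kernels keep the product-of-bilinear-forms walk structure required by Step~3. This is exactly the kind of combinatorial accounting — a corollary of the binomial identity — emphasized in the introduction, and the $m \le 4$ instances of it are the explicit computations of Section~\ref{sec:explain}; the general case tracks how the $m - 2$ middle indices migrate under repeated telescoping and is carried out in the appendix.
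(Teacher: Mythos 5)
Your proposal follows essentially the same route as the paper's (very compressed) proof: both rewrite $\hat{\IIFF}_{m, m, k} (\hat{\Omega})$ as a linear combination of lower-order $U$/$V$-statistics --- the paper via its binomial-coefficient series representation plus a Stirling-number $U$-to-$V$ conversion, you via the telescoping identity and inclusion--exclusion --- and then exploit the separable walk structure of the kernel after the SVD of Proposition \ref{prop:stability} and Remark \ref{rem:algorithm} to cost each piece. The step you flag as the main obstacle is exactly the general-$m$ version of the explicit $m = 3, 4$ computations in Section \ref{sec:explain} and is where the exponent $\lceil (m - 1) / 2 \rceil$ originates; the paper compresses all of this into ``leveraging the special structure of the $U$-statistic kernel,'' so your account is, if anything, more explicit than the one in the text.
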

	
    \begin{proof}
    Similar to the proof of Proposition \ref{prop:stability}, we need to rewrite $\hat{\IIFF}_{(2, 2) \rightarrow (m, m), k} (\hat{\Omega})$ in the form of a linear combination of $V$-statistics. Without loss of generality, we take $\calE_{a} (\hat{a}; O) \equiv \calE_{b} (\hat{b}; O) \equiv 1$. But let us first represent $\hat{\IIFF}_{(2, 2) \rightarrow (m, m), k} (\hat{\Omega})$ as the following series:
    \begin{align*}
        \hat{\IIFF}_{(2, 2) \rightarrow (m, m), k} (\hat{\Omega}) \equiv \sum_{j = 1}^{m} (-1)^{j} \binom{m - 1}{j - 1} \bbU_{n, j} \left[ \sfzbar_{k} (X_{1})^{\top} \hat{\Omega} \cdot \prod_{s = 3}^{m} \left( Q_{s} \hat{\Omega} \right) \cdot \sfzbar_{k} (X_{2}) \right].
    \end{align*}
 
    Note that the number of summations in $\hat{\IIFF}_{m, m, k} (\hat{\Omega})$ is
    \begin{align*}
	n (n - 1) \cdots (n - m + 1) = \sum_{j = 1}^{m} (-1)^{m - j} s (m, j) n^{j}
    \end{align*}
    where $s (m, j)$ are unsigned Stirling numbers of the first kind, or the number of permutations on $m$ elements with $j$ cycles. Accordingly one can write an $m$-th order $U$-statistic into a linear combination of $V$-statistics from order $1$ to order $m$, with the number of $j$-th order $V$-statistics, for $j = 1, \cdots, m$, equal to $s (m, j)$. 
	    
    The proof is completed by leveraging the special structure of the $U$-statistic kernel for sHOIF estimators.
    \end{proof}
	
    \begin{remark}
    \label{rem:comp-stat}
    Considering Theorem \ref{thm:properties} and Theorem \ref{thm:time} in tandem, there is a clear statistical-computational trade-off. 
    \noindent However, whether or not such statistical-computational trade-off is an emanation of possibly intrinsic computational hardness of estimating certain smooth statistical functionals is still an open problem

    Finally, we briefly comment on our philosophical stance on the usefulness of sHOIF estimators. sHOIF estimators are effectively infinite-order $U$-statistics, so given the current computing devices, there is no doubt that practitioners are not using sHOIF estimators in practice in near term. This is ``conditional'' on the availability of hardware. The numerical stability or lack thereof, however, is an issue  regardless of the availability of more powerful computing resources.
    \end{remark}

    \begin{remark}
    \label{rem:ehoif-comp}
    Theorem \ref{thm:time} also applies to eHOIF estimators \citep{liu2017semiparametric} and the original HOIF estimators of \citet{robins2016technical}, that needs an estimate of the density of the covariates $X$, if the time for density estimation is not counted.
    \end{remark}
    
    \section{Applications of the statistical properties of sHOIF estimators}
    \label{sec:applications}
    
    \subsection{Semiparametric efficiency under minimal \Holder{} assumptions on the nuisance functions}
    \label{sec:semi}
    
    In nonparametric statistics, the optimality of a statistical procedure is often evaluated under the \Holder{} nuisance models. 
    
    The above calculations culminate into the following theorem, which is the second main result of this paper.
    \begin{theorem}
    \label{thm:semi}
    If $\calA \times \calB \subseteq \calH (s_{a}, \calX) \times \calH (s_{b}, \calX)$ with $(s_{a} + s_{b}) / 2 \geq d / 4$, and choosing $m \asymp \sqrt{\log n}$ and $k \lesssim n / \log (n)^{2}$,
    \begin{equation}
        \sqrt{n} \left( \hat{\psi}_{m, k} - \psi (\theta) \right) \overset{\calL}{\rightarrow} \calN (0, \bbE [\IF_{1} (\theta)^{2}])
    \end{equation}
    where $\bbE [\IF_{1} (\theta)^{2}]$ is the semiparametric efficiency bound of $\psi (\theta)$.
    \end{theorem}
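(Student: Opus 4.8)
The plan is to reduce $\sqrt{n}(\hat{\psi}_{m, k} - \psi (\theta))$ to its first-order linear term plus remainders that vanish in probability. Writing $\hat{\psi}_{m, k} - \psi (\theta) = (\hat{\psi}_{1} - \psi (\theta)) + \hat{\IIFF}_{(2, 2) \rightarrow (m, m), k} (\hat{\Omega})$ and expanding the first-order estimator in the usual cross-fitting fashion, $\hat{\psi}_{1} - \psi (\theta) = \bbP_{n} [\IF_{1} (\theta)] + (\bbP_{n} - \bbE) [\IF_{1} (\hat{\theta}) - \IF_{1} (\theta)] + \Bias (\hat{\psi}_{1})$. Conditionally on the frozen nuisance sample, $\sqrt{n}\, \bbP_{n} [\IF_{1} (\theta)]$ is a normalized sum of i.i.d.\ mean-zero terms whose variance $\bbE [\IF_{1} (\theta)^{2}]$ is finite by Assumption \ref{cond:nuis}, so $\sqrt{n}\, \bbP_{n} [\IF_{1} (\theta)] \overset{\calL}{\rightarrow} \calN (0, \bbE [\IF_{1} (\theta)^{2}])$; and $\sqrt{n}\, (\bbP_{n} - \bbE) [\IF_{1} (\hat{\theta}) - \IF_{1} (\theta)] = o_{\bbP} (1)$ because $\hat{\theta}$ is independent of the estimation sample and $\Vert \hat{a} - a \Vert_{2} = o (1)$, $\Vert \hat{b} - b \Vert_{2} = o (1)$ (Assumption \ref{cond:nuis}(i)). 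So everything reduces to showing $\sqrt{n}\,(\Bias (\hat{\psi}_{1}) + \hat{\IIFF}_{(2, 2) \rightarrow (m, m), k} (\hat{\Omega})) = o_{\bbP} (1)$.

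I would split this into the truncation bias $\TB \coloneqq \Bias (\hat{\psi}_{1}) - \Bias_{\theta, k} (\hat{\psi}_{1})$, the kernel estimation bias $\EB_{m, k} (\hat{\psi}_{1}) = \bbE [\hat{\IIFF}_{(2, 2) \rightarrow (m, m), k} (\hat{\Omega})] - \Bias_{\theta, k} (\hat{\psi}_{1})$, and the centered fluctuation $\hat{\IIFF}_{(2, 2) \rightarrow (m, m), k} (\hat{\Omega}) - \bbE [\hat{\IIFF}_{(2, 2) \rightarrow (m, m), k} (\hat{\Omega})]$. For the fluctuation, Theorem \ref{thm:properties}(ii) gives $\var [\hat{\IIFF}_{(2, 2) \rightarrow (m, m), k} (\hat{\Omega})] \lesssim \frac{1}{n} ( \frac{k}{n} + o (1) ) = o (1 / n)$, the nuisance factor being $o(1)$ by Assumption \ref{cond:nuis}(i)--(iii) and $k = o(n)$, so its $\sqrt{n}$-scaling is $o_{\bbP} (1)$ by Chebyshev; this same bound, via Cauchy--Schwarz, also makes the covariance of $\hat{\psi}_{1}$ with the correction $o (1 / n)$, so the limiting variance is undisturbed. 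For the kernel estimation bias I would invoke Theorem \ref{thm:properties}(i): the nuisance factors multiplying $(k m / n)^{\lceil (\lceil (m - 1) / 2 \rceil - 1) / 2 \rceil \vee 1}$ in \eqref{EBm} are $O(1)$ (indeed $o(1)$, since $\Vert \hat{b} - b \Vert_{2} = o(1)$), and the prefactor is driven below $n^{-1/2}$ by the choices $k \lesssim n / \log^{2} n$ (which makes the base $k m / n \to 0$, hence strictly less than $1$) and $m \asymp \sqrt{\log n}$ (which pushes the exponent, linear in $m$, up fast enough); I expect this log-bookkeeping to need care.

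For the truncation bias, $\Bias_{\theta, k} (\hat{\psi}_{1})$ is obtained from $\Bias (\hat{\psi}_{1})$ by replacing the error functions $\hat{a} / a - 1$ and $b - \hat{b}$ by their projections onto $\sfzbar_{k}$ and $A \sfzbar_{k}$, so orthogonality of the projection together with Cauchy--Schwarz gives $| \TB | \leq \Vert (\hat{a} / a - 1) - \Pi [\hat{a} / a - 1 | \sfzbar_{k}] \Vert_{2}\, \Vert (b - \hat{b}) - \Pi [b - \hat{b} | A \sfzbar_{k}] \Vert_{2}$. Taking $\hat{a}, \hat{b}$ as in \citet{liu2017semiparametric} and using the approximation properties of the dictionary under Assumption \ref{cond:b}, each residual is of order $k^{- s_{a} / d}$ resp.\ $k^{- s_{b} / d}$, whence $| \TB | \lesssim k^{- (s_{a} + s_{b}) / d}$; since $(s_{a} + s_{b}) / 2 \geq d / 4$ means $(s_{a} + s_{b}) / d \geq 1 / 2$, choosing $k$ as large as the previous constraints allow ($k$ of order $n / \polylog n$) yields $\sqrt{n}\, | \TB | = o(1)$ in the interior of the range; at the exact boundary $(s_{a} + s_{b}) / 2 = d / 4$ one needs either a sharper bias expansion or to restrict to the interior, as in \citet{liu2017semiparametric}. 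Combining the three bounds, $\sqrt{n} (\hat{\psi}_{m, k} - \psi (\theta)) = \sqrt{n}\, \bbP_{n} [\IF_{1} (\theta)] + o_{\bbP} (1)$, and Slutsky's theorem gives the stated CLT; that $\bbE [\IF_{1} (\theta)^{2}]$ is the semiparametric efficiency bound for the DRF $\psi (\theta)$ is classical (\citet{robins2016technical}, \citet{rotnitzky2021characterization}).

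\noindent The main obstacle is the joint calibration of $(m, k)$: the truncation bias forces $k$ close to $n$, while the kernel estimation bias and the variance of the correction cap $k$ below $n$ and tie the admissible $m$ to the size of $k$. Verifying that a common window exists --- that $\sqrt{n}\,(k m / n)^{\Theta (m)} \to 0$ holds simultaneously with $\sqrt{n}\, k^{- (s_{a} + s_{b}) / d} \to 0$ under $(s_{a} + s_{b}) / 2 \geq d / 4$ --- is the delicate part, and the boundary case is where the generic bounds of Theorem \ref{thm:properties} may not suffice without refinement.
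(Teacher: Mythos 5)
Your decomposition is exactly the route the paper intends: the paper offers no separate proof of Theorem \ref{thm:semi} beyond the remark that ``the above calculations culminate'' in it, and the intended argument is precisely (linear term) $+$ (empirical-process term) $+$ (truncation bias) $+$ (kernel estimation bias) $+$ (centered fluctuation), with the last three controlled by Theorem \ref{thm:properties} and the approximation properties of $\sfzbar_{k}$. Your treatment of the variance, the empirical-process term, and the covariance with $\hat{\psi}_{1}$ is fine, and you are right to flag the boundary case $(s_{a}+s_{b})/2 = d/4$ (the Discussion section of the paper quietly switches to the strict inequality).

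The genuine gap is the step where you assert that the prefactor $(km/n)^{\Theta(m)}$ ``is driven below $n^{-1/2}$'' by $m \asymp \sqrt{\log n}$ and $k \lesssim n/\log^{2}n$. With these choices $km/n \asymp \log^{-3/2}n$, so
\begin{align*}
\left(\frac{km}{n}\right)^{\Theta(m)} = \exp\left(-\Theta\left(\sqrt{\log n}\,\log\log n\right)\right) = n^{-\Theta\left(\frac{\log\log n}{\sqrt{\log n}}\right)} = n^{-o(1)},
\end{align*}
which decays faster than any power of $\log n$ but slower than any power of $n$; it is not $o(n^{-1/2})$. To extract polynomial decay from a polylogarithmic base the exponent must be of order $\log n/\log\log n$, i.e.\ $m \gtrsim \log n/\log\log n$, which is the calibration used for the eHOIF estimators in \citet{liu2017semiparametric} and is still admissible under the hypothesis $m \gtrsim \sqrt{\log n}$ of Theorem \ref{thm:properties}. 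Nor can you close the gap through the nuisance factors in \eqref{EBm}: the first term there is $\Vert(\hat a - 1)/a\Vert_{2}\Vert\hat b - b\Vert_{2}$, and $\Vert(\hat a - 1)/a\Vert_{2}$ does not vanish (it involves $\hat a - 1$, not $\hat a - a$), so that factor contributes only $\Vert\hat b - b\Vert_{2} \asymp n^{-s_{b}/(2s_{b}+d)} \gg n^{-1/2}$. As written, therefore, your argument that $\sqrt{n}\,\EB_{m,k}(\hat\psi_{1}) = o(1)$ does not go through with $m \asymp \sqrt{\log n}$; you must either enlarge $m$ to $\asymp \log n/\log\log n$ (which is compatible with the rest of your proof) or obtain a sharper version of Theorem \ref{thm:properties}(i). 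This is precisely the ``common window'' issue you correctly isolate in your closing paragraph, but it is a real obstruction to the theorem as stated rather than mere bookkeeping.
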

    
    \begin{remark}
    According to the lower bound of \citet{robins2009semiparametric} under the \Holder{} nuisance model, $(s_{a} + s_{b}) / 2 \geq d / 4$ is the minimal condition for the existence of a semiparametric efficient estimator of $\psi (\theta)$. It is not unreasonable to expect that this minimal condition also holds for most, if not all, of the DRFs.
    \end{remark}
        
    \subsection{Implications on the assumption-free bias testing procedure of \citet{liu2020nearly} and \citet{liu2021assumption}}
    \label{sec:bias_test}
    
    In light of the growing interest in understanding the performance of deep-learning-based causal inference \citep{farrell2021deep, chen2020causal} and the gap between these theoretical results and empirical performance \citep{xu2022deepmed}, \citet{liu2020nearly} proposed the following oracle assumption-free valid nominal $\alpha$-level test statistic:
    \begin{equation}
        \label{oracle_test}
        \hat{\chi} \coloneqq \mathbbm{1} \left\{ \frac{\hat{\IIFF}_{2, 2, k}}{\hat{\se} [\hat{\psi}_{1}]} - z_{\alpha / 2} \frac{\hat{\se} [\hat{\IIFF}_{2, 2, k}]}{\hat{\se} [\hat{\psi}_{1}]} > \delta \right\}
    \end{equation}
    for the following null hypothesis:
    \begin{equation}
        \label{h0}
        \sfH_{0} (\delta): \frac{\csBias (\hat{\psi}_{1})}{\se [\hat{\psi}_{1}]} \leq \delta
    \end{equation}
    where 
    \begin{equation}
        \csBias (\hat{\psi}_{1}) \coloneqq \left\{ \bbE \left[ \lambda (X) (\hat{a} (X) - a (X))^{2} \right] \bbE \left[ \lambda (X) (\hat{b} (X) - b (X))^{2} \right] \right\}^{1 / 2}.
    \end{equation}
    
    \citet{liu2021assumption} in turn constructed a feasible assumption-lean valid nominal $\alpha$-level test statistic 
    \begin{equation}
        \hat{\chi}_{3, k} (\hat{\Omega}_{\tr}) \coloneqq \mathbbm{1} \left\{ \frac{\hat{\IIFF}_{(2, 2) \rightarrow (3, 3), k} (\hat{\Omega}_{\tr})}{\hat{\se} [\hat{\psi}_{1}]} - z_{\alpha / 2} \frac{\hat{\se} [\hat{\IIFF}_{(2, 2) \rightarrow (3, 3), k} (\hat{\Omega}_{\tr})]}{\hat{\se} [\hat{\psi}_{1}]} > \delta \right\}
    \end{equation}
    and the following higher-order test statistic based on eHOIF estimators:
    \begin{equation}
    \hat{\chi}_{m, k} (\hat{\Omega}_{\tr}) \coloneqq \mathbbm{1} \left\{ \frac{\hat{\IIFF}_{(2, 2) \rightarrow (m, m), k} (\hat{\Omega}_{\tr})}{\hat{\se} [\hat{\psi}_{1}]} - z_{\alpha / 2} \frac{\hat{\se} [\hat{\IIFF}_{(2, 2) \rightarrow (m, m), k} (\hat{\Omega}_{\tr})]}{\hat{\se} [\hat{\psi}_{1}]} > \delta \right\}.
    \end{equation}
    
    \citet{liu2021assumption} showed that all the standard errors in the above test statistics can be estimated consistently by certain bootstrapping procedure. More importantly, they proved the following.
    \begin{proposition}
    Let
    \begin{align*}
        \csBias_{k} (\hat{\psi}_{1}) \coloneqq \left\{ \bbE \left[ \Pi [(A \hat{a} - 1) | \sfzbar_{k}] (X)^{2} \right] \bbE \left[ \Pi [A (\hat{b} - b) | \sfzbar_{k}] (X)^{2} \right] \right\}^{1 / 2}.
    \end{align*}
    Under the assumptions of Theorem \ref{thm:semi}, with $k \lesssim n / (\log n)^{2}$ and the following extra condition:
    \begin{equation}
        \label{extra}
        \vert \EB_{3, k} (\hat{\psi}_{1}) \vert \not\ll \csBias_{k} (\hat{\psi}_{1})
    \end{equation}
    then $\hat{\chi}_{3, k} (\hat{\Omega}_{tr})$ is a valid nominal $\alpha$-level test of $\sfH_{0} (\delta)$ \eqref{h0}. The extra condition \eqref{extra} can be relaxed to
    \begin{equation}
        \label{extra_relax}
        \vert \EB_{m, k} (\hat{\psi}_{1}) \vert \not\ll \csBias_{k} (\hat{\psi}_{1}) \left( \frac{k \log k}{n} \right)^{\frac{m - 1}{2}}
    \end{equation}
    if one uses $\hat{\chi}_{m, k} (\hat{\Omega}_{tr})$ instead of $\hat{\chi}_{3, k} (\hat{\Omega}_{tr})$.
    \end{proposition}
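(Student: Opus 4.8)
The plan is to reprise the type-I-error analysis of \citet{liu2021assumption} with the $m$-th order empirical HOIF correction $T_m \coloneqq \hat{\IIFF}_{(2,2)\rightarrow(m,m),k}(\hat{\Omega}_{\tr})$ in place of the low-order ones. Two facts are used throughout: by the definition of the kernel-estimation bias, $\bbE[T_m] = \Bias_{\theta,k}(\hat{\psi}_1) + \EB_{m,k}(\hat{\psi}_1)$, and for the empirical HOIF $|\EB_{m,k}(\hat{\psi}_1)| \lesssim (k\log k/n)^{m/2}\|\hat{a}/a-1\|_{2}\|\hat{b}-b\|_{2}$ (Remark \ref{rem:compare}, from \citet{liu2017semiparametric}). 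First I would rewrite the rejection event as $\{T_m - z_{\alpha/2}\hat{\se}[T_m] > \delta\,\hat{\se}[\hat{\psi}_1]\}$, recenter $T_m$ at $\bbE[T_m]$ and normalize by $\se_{\theta}(\hat{\IIFF}_{2,2,k})$. The rejection probability then collapses to a Gaussian tail via three imported ingredients: (a) the Hoeffding decomposition of $T_m$ is dominated by the degenerate second-order component, of variance $\asymp k/n^{2}$ by Theorem \ref{thm:properties}(ii), and $(\hat{\IIFF}_{2,2,k}-\Bias_{\theta,k}(\hat{\psi}_1))/\se_{\theta}(\hat{\IIFF}_{2,2,k})\todistr\calN(0,1)$ for $1\ll k\ll n^{2}$ (Theorem 1 of \citet{bhattacharya1992class}); (b) the bootstrap standard errors are consistent, $\hat{\se}[T_m]/\se_{\theta}(\hat{\IIFF}_{2,2,k})\toprob1$ and $\hat{\se}[\hat{\psi}_1]/\se[\hat{\psi}_1]\toprob1$ (\citet{liu2020nearly,liu2021assumption}); and (c) since $k=o(n)$, $\se_{\theta}(\hat{\IIFF}_{2,2,k})/\se[\hat{\psi}_1]\asymp\sqrt{k/n}\to0$, so the margin $z_{\alpha/2}\hat{\se}[T_m]/\hat{\se}[\hat{\psi}_1]$ and the fluctuation $(T_m-\bbE[T_m])/\hat{\se}[\hat{\psi}_1]$ are both $o_{\Prob}(1)$.

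The second step is the bias bookkeeping under $\sfH_0(\delta)$. Cauchy--Schwarz in the $\Omega$-inner product gives $|\Bias_{\theta,k}(\hat{\psi}_1)|\le\csBias_k(\hat{\psi}_1)$: the two factors of $\Bias_{\theta,k}$ are exactly the $\sfzbar_k$-projections appearing in $\csBias_k$ because $\bbE[\sfzbar_k(X)(A\hat{a}(X)-1)]=\bbE[\sfzbar_k(X)(\hat{a}(X)/a(X)-1)]$. Since a linear projection is an $L_2$-contraction and, for $\psi(\theta)=\bbE[Y(1)]$, the cs-bias weight is $\lambda=1/a$, bounded above and below by Assumption \ref{cond:nuis}(iii), one also has $\csBias_k(\hat{\psi}_1)\le\csBias(\hat{\psi}_1)(1+o(1))$, the $o(1)$ absorbing the \Holder{}-class truncation error already controlled in the proof of Theorem \ref{thm:semi}. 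Hence on $\sfH_0(\delta)$, $\Bias_{\theta,k}(\hat{\psi}_1)/\se[\hat{\psi}_1]\le\delta(1+o(1))$, so by step one the rejection probability is $\le\alpha+o(1)$ as soon as $\EB_{m,k}(\hat{\psi}_1)/\se[\hat{\psi}_1]=o(1)$. This is exactly where $k\lesssim n/\log^{2}n$ and the hypotheses relating $\EB_{m,k}$ to $\csBias_k$ enter: on the null, $\|\hat{a}/a-1\|_{2}\|\hat{b}-b\|_{2}\asymp\csBias(\hat{\psi}_1)\le\delta\,\se[\hat{\psi}_1]$, so the bound on $\EB_{m,k}$ forces $\EB_{m,k}(\hat{\psi}_1)/\se[\hat{\psi}_1]\lesssim\delta(k\log k/n)^{m/2}\to0$. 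Condition \eqref{extra} is the $m=3$ instance of this bookkeeping, and the general-$m$ statement \eqref{extra_relax} simply replaces the third-order rate by the sharper $(k\log k/n)^{m/2}$ one available from the higher-order correction.

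The genuinely delicate pieces are the ones I would cite rather than re-prove: the degenerate $U$-statistic CLT for $\hat{\IIFF}_{2,2,k}$ valid uniformly over $1\ll k\ll n^{2}$, and the rate-consistency of the bootstrap standard errors $\hat{\se}[T_m]$, $\hat{\se}[\hat{\psi}_1]$, both established in \citet{liu2020nearly,liu2021assumption}. Within the recap itself the one point that needs care is the scale mismatch in (c): once $k\to\infty$, $\se_{\theta}(\hat{\IIFF}_{2,2,k})\ll\se[\hat{\psi}_1]$, so the test is asymptotically the deterministic indicator $\mathbbm{1}\{(\Bias_{\theta,k}(\hat{\psi}_1)+\EB_{m,k}(\hat{\psi}_1))/\se[\hat{\psi}_1]>\delta\}$ up to an $o_{\Prob}(1)$ term, which is precisely why the argument is sensitive to the \emph{deterministic} residual bias $\EB_{m,k}$ and must invoke the quantitative control of Theorem \ref{thm:properties} / \citet{liu2017semiparametric} rather than a crude upper bound.
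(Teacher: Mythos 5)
First, a point of reference: the paper itself contains no proof of this proposition. It is imported verbatim from \citet{liu2021assumption} (``More importantly, they proved the following''), so there is no in-paper argument to compare yours against. Judged on its own terms, your skeleton --- recentering $T_m$ at its mean, the degenerate-$U$-statistic CLT of \citet{bhattacharya1992class}, bootstrap standard-error consistency, and Cauchy--Schwarz giving $|\Bias_{\theta,k}(\hat{\psi}_1)|\le\csBias_k(\hat{\psi}_1)\le\csBias(\hat{\psi}_1)\le\delta\,\se[\hat{\psi}_1]$ under $\sfH_0(\delta)$ --- is the right architecture and matches the cited source.

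There is, however, a genuine gap in the bias bookkeeping: you calibrate $\EB_{m,k}(\hat{\psi}_1)$ against the wrong standard error. Rewriting the rejection event as
$\{(T_m-\bbE[T_m])/\se_{\theta}[T_m] > z_{\alpha/2}\,\hat{\se}[T_m]/\se_{\theta}[T_m] + (\delta\hat{\se}[\hat{\psi}_1]-\Bias_{\theta,k}(\hat{\psi}_1)-\EB_{m,k}(\hat{\psi}_1))/\se_{\theta}[T_m]\}$,
and noting that $\delta\hat{\se}[\hat{\psi}_1]-\Bias_{\theta,k}(\hat{\psi}_1)\ge 0$ up to $o_{\bbP}$ terms under the null, one sees that level-$\alpha$ control requires $\EB_{m,k}(\hat{\psi}_1)=o(\se_{\theta}[T_m])$ with $\se_{\theta}[T_m]\asymp\sqrt{k}/n$ --- not merely $o(\se[\hat{\psi}_1])\asymp o(n^{-1/2})$, which is weaker by a factor of $\sqrt{n/k}$. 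A deterministic shift $\EB_{m,k}\asymp\sqrt{k}/n$ is $o(\se[\hat{\psi}_1])$ yet moves the effective critical value by a constant and pushes the rejection probability above $\alpha$ at the boundary of $\sfH_0(\delta)$. This is exactly the scale at which the extra conditions live: under $\sfH_0(\delta)$ one has $\csBias_k(\hat{\psi}_1)(k\log k/n)^{(m-1)/2}\lesssim \delta n^{-1/2}(k\log k/n)^{(m-1)/2}$, which is $\sqrt{k\log k}/n\asymp\se_{\theta}[\hat{\IIFF}_{2,2,k}]$ up to logarithms when $m=2$ --- and is why the result starts at $m=3$. Your write-up instead derives negligibility of $\EB_{m,k}$ from the generic bound $(k\log k/n)^{m/2}\Vert\hat{a}/a-1\Vert_2\Vert\hat{b}-b\Vert_2$ and never actually invokes \eqref{extra} or \eqref{extra_relax} (note also that they are stated with $\not\ll$, not $\ll$, which you silently reverse), rendering them redundant hypotheses; that cannot be the intended logic. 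A correct argument must track the ratio $\EB_{m,k}(\hat{\psi}_1)/\se_{\theta}[\hat{\IIFF}_{2,2,k}]$, and for the same reason must quantify $\hat{\se}[\hat{\psi}_1]/\se[\hat{\psi}_1]-1=o_{\bbP}(\sqrt{k/n})$ rather than appeal to bare consistency, since that error is also amplified by $\se[\hat{\psi}_1]/\se_{\theta}[T_m]\asymp\sqrt{n/k}$.
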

    
    We can similarly define the following sHOIF-based test statistics: for $m \geq 2$,
    \begin{align*}
        \hat{\chi}_{m, k} (\hat{\Omega}) \coloneqq \mathbbm{1} \left\{ \frac{\hat{\IIFF}_{(2, 2) \rightarrow (m, m), k} (\hat{\Omega})}{\hat{\se} [\hat{\psi}_{1}]} - z_{\alpha / 2} \frac{\hat{\se} [\hat{\IIFF}_{(2, 2) \rightarrow (m, m), k} (\hat{\Omega})]}{\hat{\se} [\hat{\psi}_{1}]} > \delta \right\}.
    \end{align*}
    
    Then as an immediate corollary of Theorem \ref{thm:properties}, we have
    \begin{theorem}
    Under the assumptions of Theorem \ref{thm:semi}, with $k \lesssim n / (\log n)^{2}$ and a different relaxed extra condition from \eqref{extra_relax}:
    \begin{equation}
        \label{extra_relax_stable}
        \vert \EB_{m, k} (\hat{\psi}_{1}) \vert \not\ll \csBias_{k} (\hat{\psi}_{1}) \left( \frac{k}{n} \right)^{m - 1}
    \end{equation}
    then $\hat{\chi}_{m, k} (\hat{\Omega})$ is a valid nominal $\alpha$-level test of $\sfH_{0} (\delta)$ \eqref{h0}.
    \end{theorem}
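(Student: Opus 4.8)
\emph{Proof plan.} The statement is obtained as a corollary of Theorem~\ref{thm:properties} by running the validity argument that \citet{liu2021assumption} used for the eHOIF-based test $\hat{\chi}_{m, k}(\hat{\Omega}_{\tr})$, with its eHOIF kernel-estimation-bias input $\csBias_{k}(\hat{\psi}_{1})(k \log k / n)^{m/2}$ replaced everywhere by the sharper sHOIF bound of Theorem~\ref{thm:properties}(i); it is this substitution that downgrades the extra hypothesis \eqref{extra_relax} to the weaker \eqref{extra_relax_stable}. I would organize it in three steps.

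First, reduce the level-$\alpha$ claim to a central limit theorem plus a deterministic bias comparison. Decompose
\begin{equation*}
\hat{\IIFF}_{(2, 2) \rightarrow (m, m), k}(\hat{\Omega}) = \Bias_{\theta, k}(\hat{\psi}_{1}) + \EB_{m, k}(\hat{\psi}_{1}) + R_{n},
\end{equation*}
where $R_{n}$ is mean-zero and, by the variance bounds \eqref{var_if_m}--\eqref{var_if_2_m} of Theorem~\ref{thm:properties}(ii), has standard deviation of the same order as $\se[\hat{\IIFF}_{2, 2, k}]$; in particular neither the higher-order corrections nor the passage from $\Omega$ to the correlated $\hat{\Omega}$ inflates the order of the fluctuation. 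Together with the $U$-statistic CLT of \citet{bhattacharya1992class} invoked in \citet{liu2020nearly}, valid whenever $1 \ll k \ll n^{2}$ and hence under $k \lesssim n / (\log n)^{2}$, this gives $R_{n} / \se[\hat{\IIFF}_{(2, 2) \rightarrow (m, m), k}(\hat{\Omega})] \todistr \calN(0, 1)$. Using consistency of the bootstrap standard errors $\hat{\se}[\hat{\psi}_{1}]$ and $\hat{\se}[\hat{\IIFF}_{(2, 2) \rightarrow (m, m), k}(\hat{\Omega})]$ — established as in \citet{liu2021assumption}, now for the self-normalized statistic of Theorem~\ref{thm:stability} — the event $\{\hat{\chi}_{m, k}(\hat{\Omega}) = 1\}$ is, up to $o_{\bbP}(1)$, the event that $R_{n} / \se[\hat{\IIFF}_{(2, 2) \rightarrow (m, m), k}(\hat{\Omega})]$ exceeds $z_{\alpha / 2}$ plus the deterministic shift $(\delta\, \se[\hat{\psi}_{1}] - \Bias_{\theta, k}(\hat{\psi}_{1}) - \EB_{m, k}(\hat{\psi}_{1})) / \se[\hat{\IIFF}_{(2, 2) \rightarrow (m, m), k}(\hat{\Omega})]$.

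Second, control this shift under $\sfH_{0}(\delta)$. Cauchy--Schwarz applied to the bilinear form \eqref{bias_k} gives $\Bias_{\theta, k}(\hat{\psi}_{1}) \le |\Bias_{\theta, k}(\hat{\psi}_{1})| \le \csBias_{k}(\hat{\psi}_{1})$, while the $L_{2}$-contraction property of $\Pi[\,\cdot \mid \sfzbar_{k}]$ and Assumption~\ref{cond:nuis}(iii) give $\csBias_{k}(\hat{\psi}_{1}) \lesssim \csBias(\hat{\psi}_{1}) \le \delta\, \se[\hat{\psi}_{1}]$ under $\sfH_{0}(\delta)$; and Theorem~\ref{thm:properties}(i) bounds $|\EB_{m, k}(\hat{\psi}_{1})|$ by $\csBias_{k}(\hat{\psi}_{1})$ times a power of $k m / n$, each nuisance product in \eqref{EBm} being itself $\lesssim \csBias_{k}(\hat{\psi}_{1})$ by Cauchy--Schwarz and Assumption~\ref{cond:nuis}. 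Feeding this last bound — in place of the eHOIF bound $\csBias_{k}(\hat{\psi}_{1})(k \log k / n)^{m/2}$ — into the remaining steps of the \citet{liu2021assumption} argument, which bound the deterministic shift below and conclude $\bbP(\hat{\chi}_{m, k}(\hat{\Omega}) = 1) \to \bbP(\calN(0, 1) > z_{\alpha / 2} + c) \le \alpha$ for some $c \ge 0$, the hypothesis \eqref{extra_relax} gets replaced by exactly \eqref{extra_relax_stable}. Combined with the first step, this yields the theorem.

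The main obstacle is not this probabilistic backbone, which is inherited from \citet{liu2020nearly, liu2021assumption}, but the bookkeeping needed to certify that every step survives replacing the estimation-sample-independent $\hat{\Omega}_{\tr}$ by the correlated $\hat{\Omega}$: the variance comparison $\se[\hat{\IIFF}_{(2, 2) \rightarrow (m, m), k}(\hat{\Omega})] \asymp \se[\hat{\IIFF}_{2, 2, k}]$ and the distributional equivalence of the centered feasible statistic to the centered oracle second-order $U$-statistic both rest on the leave-out/ghost-copy computations behind Theorem~\ref{thm:properties}(ii) and on Lemma~\ref{lem:nc}, while bootstrap consistency must be re-derived for $\hat{\Omega}$; tracking the exact exponent when passing from the bias bound \eqref{EBm} to the power of $k / n$ in \eqref{extra_relax_stable} is routine but requires care with the nested ceilings.
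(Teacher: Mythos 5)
Your proposal matches the paper's intended argument: the paper supplies no explicit proof of this result, presenting it as ``an immediate corollary of Theorem~\ref{thm:properties}'' obtained by feeding the sHOIF kernel-estimation-bias and variance bounds into the validity argument of \citet{liu2021assumption} for the eHOIF test, which is precisely the reduction you spell out. Your closing caveat is also apt --- the exponent $\lceil \frac{\lceil \frac{m-1}{2}\rceil - 1}{2}\rceil \vee 1$ on $km/n$ in \eqref{EBm} does not literally match the $(k/n)^{m-1}$ appearing in \eqref{extra_relax_stable}, a bookkeeping discrepancy that originates in the paper's statement rather than in your argument.
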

    
    Given the above theoretical guarantees, and further considering that the sHOIF estimators and tests have better finite-sample performance than the corresponding eHOIF estimators and tests, we recommend using $\hat{\chi}_{m, k} (\hat{\Omega})$ in practice. For more examples of its application, see \citet{wanis2023machine}.
    
    \section{Further extensions of sHOIF estimators}
    \label{sec:extensions}
    
    \subsection{Generalization to the entire class of DRFs}
    \label{sec:drf}
    
    In this subsection, we briefly comment on how our results can be generalized to the entire class of DRFs characterized in \citet{rotnitzky2021characterization}. The class of DRFs includes many other functionals that arise in substantive studies in (bio)statistics, epidemiology, economics, and social sciences, including:
    \begin{itemize}
        \item the expected conditional variance, which is useful for constructing confidence/predictive sets \citep{robins2006adaptive};
        \item the expected conditional covariance, which is useful for both causal inference and conditional independence testing \citep{shah2020hardness};
        \item average causal effect of continuous treatment, which is important for treatment allocations \citep{ai2021unified, bonvini2022fast}.
    \end{itemize}
    
    \citet{rotnitzky2021characterization} defined the class of DRFs as follows:
    \begin{definition}[Doubly Robust Functionals; Definition 1 of \citet{rotnitzky2021characterization}]
    $\psi (\theta)$ is a doubly robust functional if, for each $\theta \in \Theta$ there exists $a: x \mapsto a (x) \in \calA$ and $b: x \mapsto b (x) \in \calB$ such that (i) $\theta = (a, b, \theta \setminus \{b, p\})$ and $\Theta = \calA \times \calB \times \Theta \setminus \{\calA, \calB\}$ and (ii) for any $\theta, \theta' \in \Theta$ 
    \begin{equation}
    \psi (\theta) - \psi (\theta^{\prime}) + \bbE \left[ \IF_{1} (\theta^{\prime}) \right] = \bbE \left[ S (a (X) - a^{\prime} (X)) (b (X) - b^{\prime} (X)) \right] \label{eq:drbias}
    \end{equation}
    where $S \equiv s (O)$ with $o \mapsto s (o)$ a known function that does not depend on $\theta$ or $\theta'$ satisfying either $\bbP_{\theta} (S \geq 0) = 1$ or $\bbP_{\theta} (S \leq 0) = 1$. We also denote $\lambda (x) \coloneqq \bbE [S | X = x]$. Then the first-order influence function of $\psi (\theta)$ has the following form: given $\theta' \equiv (a', b', \theta' \setminus \{a', b'\})^{\top} \in \Theta$,
    \begin{equation}
        \label{drf_if1}
        \IF_{1} (\theta') \equiv S a' (X) b' (X) + m_{a} (O, a') + m_{b} (O, b') + S_{0}
    \end{equation}
    where $S_{0}$ is some known statistic that does not depend on $a'$ and $b'$, and $h \mapsto m_{a} (o, h)$ for $h \in \calA$ and $h \mapsto m_{b} (o, h)$ for $h \in \calB$ are two known linear maps satisfying
    \begin{align*}
        & \bbE \left[ S h (X) b (X) + m_{a} (O, h) \right] \equiv 0, \ \forall \ h \in \calA, \\
        & \bbE \left[ S a (X) h (X) + m_{b} (O, h) \right] \equiv 0, \ \forall \ h \in \calB.
    \end{align*}
    As a result, $\psi (\theta) \equiv \bbE [m_{a} (O, a) + S_{0}] \equiv \bbE [m_{b} (O, b) + S_{0}]$.
    \end{definition}
    
    \begin{remark}
    For $\psi (\theta) \equiv \bbE [Y (1)]$ under strong ignorability, $S$, $a$, $b$, $m_{a} (O, a)$, $m_{b} (O, b)$, and $S_{0}$ correspond to $- A$, $\{\bbE [A | X]\}^{-1}$, $\bbE [Y | X, A = 1]$, $A Y a (X)$, $b (X)$, and $0$, respectively. Thus $\lambda (X) = \bbE [- A | X] = - \frac{1}{a (X)}$. We also have
        \begin{align*}
            & \bbE [\calE_{a} (\hat{a}; O) \sfzbar_{k} (X)] = \bbE \left[ \frac{1}{a (X)} (\hat{a} (X) - a (X)) \sfzbar_{k} (X) \right], \\
            & \bbE [\calE_{b} (\hat{b}; O) \sfzbar_{k} (X)] = \bbE \left[ \frac{1}{a (X)} (\hat{b} (X) - b (X)) \sfzbar_{k} (X) \right].
        \end{align*}
    \end{remark}
    
    We have the following notation correspondence that maps the results for $\psi (\theta) \equiv \bbE [Y (1)]$ under strong ignorability to any DRF $\psi (\theta)$:
    \begin{itemize}
        \item $\calE_{a} (\hat{a}; O) \sfzbar_{k} (X) \Rightarrow \calE_{a} (\hat{a}, \sfzbar_{k}; O)$ and $\calE_{b} (\hat{b}; O) \sfzbar_{k} (X) \Rightarrow \calE_{b} (\hat{b}, \sfzbar_{k}; O)$ where $\calE_{a} (\hat{a}, \sfzbar_{k}; O)$ and $\calE_{b} (\hat{b}, \sfzbar_{k}; O)$ satisfy
        \begin{align*}
            & \bbE [\calE_{a} (\hat{a}, \sfzbar_{k}; O)] = \bbE \left[ \lambda (X) (\hat{a} (X) - a (X)) \sfzbar_{k} (X) \right], \\
            & \bbE [\calE_{b} (\hat{b}, \sfzbar_{k}; O)] = \bbE \left[ \lambda (X) (\hat{b} (X) - b (X)) \sfzbar_{k} (X) \right].
        \end{align*}
        \item $\Sigma = \bbE [A \sfzbar_{k} (X) \sfzbar_{k} (X)^{\top}]$ $\Rightarrow$ $\Sigma = \bbE [S \sfzbar_{k} (X) \sfzbar_{k} (X)^{\top}]$ and $\hat{\Sigma} = \bbP_{n} [A \sfzbar_{k} (X) \sfzbar_{k} (X)^{\top}]$ $\Rightarrow$ $\hat{\Sigma} = \bbP_{n} [S \sfzbar_{k} (X) \sfzbar_{k} (X)^{\top}]$.
    \end{itemize}
    With the above mappings, all the theoretical results for $\psi (\theta) \equiv \bbE [Y (1)]$ developed herein can be applied to those for an arbitrary DRF $\psi (\theta)$ {\it mutatis mutandis}.

    \subsubsection{A special case: the expected conditional covariance}
    \label{sec:special}

    Before concluding our paper, we further study the implications of the sHOIF theory developed so far for a special cases of DRFs: the expected conditional covariance between two random variables $A$ and $Y$ given a third random variable $X$, $\psi \coloneqq \bbE [\cov (A, Y | X)]$. When $A = Y$ almost surely, $\psi$ reduces to the expected conditional variance of $A$ given $X$, $\psi \coloneqq \bbE [\cov (A | X)]$. For differences between these two parameters, see an extended discussion in \citet{liu2020nearly}.

    The main feature that distinguishes $\psi$ from many other DRFs is $S = 1$, which leads to its SOIF:
    \begin{align*}
        \hat{\IIFF}_{2, 2, k} = \frac{1}{n (n - 1)} \sum_{1 \leq i_{1} \neq i_{2} \leq n} (A_{i_{1}} - \hat{a} (X_{i_{1}})) \sfzbar_{k} (X_{i_{1}})^{\top} \bar{\Omega} \sfzbar_{k} (X_{i_{2}}) (Y_{i_{2}} - \hat{b} (X_{i_{2}})),
    \end{align*}
    in which $\bar{\Omega} \equiv \{\bbE [S \sfzbar_{k} (X) \sfzbar_{k} (X)^{\top}]\}^{-1} \equiv \{\bbE [\sfzbar_{k} (X) \sfzbar_{k} (X)^{\top}]\}^{-1}$ only depends on the distribution of $X$. Also, $\hat{a}$ and $\hat{b}$ are nuisance estimates of $a (x) = \bbE [A | X = x]$ and $b (x) = \bbE [Y | X = x]$ in this context. This leads to the following improved kernel estimation bias bound:
    \begin{corollary}
    Under Assumptions \ref{cond:nuis} -- \ref{cond:b}, with $k \lesssim \frac{n}{\log^{2} n}$ and $m \gtrsim \sqrt{\log n}$, one has the following:
    
    \noindent The kernel estimation bias of $\hat{\IIFF}_{(2, 2) \rightarrow (m, m), k} (\hat{\Omega})$ satisfies
    \begin{equation} \label{special EBm}
        \begin{split}
        & \EB_{m, k} (\hat{\psi}_{1}) \coloneqq \bbE \left[ \hat{\IIFF}_{(2, 2) \rightarrow (m, m), k} (\hat{\Omega}) \right] - \Bias_{\theta, k} (\hat{\psi}_{1}) \equiv \bbE \left[ \hat{\IIFF}_{(2, 2) \rightarrow (m, m), k} (\hat{\Omega}) - \hat{\IIFF}_{2, 2, k} \right] \\
        & \lesssim \left( \frac{k m}{n} \right)^{\lceil \frac{\lceil \frac{m - 1}{2} \rceil - 1}{2} \rceil \vee 1} \left\{ \left\Vert \hat{a} - a \right\Vert_{2} \Vert \hat{b} - b \Vert_{2} + \left( \left\Vert \hat{a} - a \right\Vert_{2} \left\Vert \hat{b} - b \right\Vert_{\infty} \wedge \left\Vert \hat{a} - a \right\Vert_{\infty} \left\Vert \hat{b} - b \right\Vert_{2} \right) \right\}.
        \end{split}
    \end{equation}
    \end{corollary}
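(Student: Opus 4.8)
The plan is to derive this corollary from Theorem~\ref{thm:properties} through the \emph{mutatis mutandis} transfer set up in Section~\ref{sec:drf}, specialized to the expected conditional covariance, for which $S \equiv 1$ and hence $\lambda(X) = \bbE[S\mid X] \equiv 1$. First I would instantiate the DRF dictionary: take $\calE_a(\hat a,\sfzbar_k;O) = (A - \hat a(X))\sfzbar_k(X)$ and $\calE_b(\hat b,\sfzbar_k;O) = (Y - \hat b(X))\sfzbar_k(X)$, so that $\bbE[\calE_a(\hat a,\sfzbar_k;O)\mid X] = (a(X) - \hat a(X))\sfzbar_k(X)$ and $\bbE[\calE_b(\hat b,\sfzbar_k;O)\mid X] = (b(X) - \hat b(X))\sfzbar_k(X)$, and set $\Sigma = \bbE[\sfzbar_k(X)\sfzbar_k(X)^\top]$, $\hat\Sigma = \bbP_n[\sfzbar_k(X)\sfzbar_k(X)^\top]$, $Q_i = \sfzbar_k(X_i)\sfzbar_k(X_i)^\top$. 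The crucial structural simplifications relative to the ATE case are (a) $Q_i$ carries no $A$-weight, so $\hat\Sigma$ is measurable with respect to $(X_1,\dots,X_n)$ alone, and (b) the conditional means of the scores are exactly $\hat a - a$ and $\hat b - b$, with no $1/a$ reweighting and no $-1$ offset.

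Next I would re-run the bias expansion underlying Theorem~\ref{thm:properties} (Appendix~\ref{app:main}) verbatim with these substitutions: expand $\hat\Omega$ through the matrix identity $(A-B)^{-1}-A^{-1} = -A^{-1}B(A-B)^{-1}$ with $A$ a leave-out Gram matrix and $B$ proportional to the dropped $Q$'s, truncate the resulting Neumann-type series at $J \asymp \log n$ (the tail being $o(n^{-1/2})$ under $k \lesssim n/\log^2 n$ by the spectral control in Assumption~\ref{cond:b}, exactly as in the general proof), expand the iterated powers of the dropped blocks by the non-commutative binomial identity, and carry out the leave-out cancellations against the higher-order terms $\hat\IIFF_{j,j,k}(\hat\Omega)$. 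The combinatorial accounting of which monomials survive these cancellations—hence the exponent $\lceil(\lceil(m-1)/2\rceil-1)/2\rceil\vee 1$ of $km/n$—is controlled by the bounds of Lemma~\ref{lem:nc} and their higher-order generalizations on $\bbE[\sfzbar_k(X_1)^\top\sfzbar_k(X_2)\sfzbar_k(X_3)^\top(\prod_{\ell} Q_{1,2}^{\mathfrak{j}_{\ell}} Q_{3,4}^{1-\mathfrak{j}_{\ell}})\sfzbar_k(X_4)]$; these hold identically (indeed more transparently) for $Q_i = \sfzbar_k(X_i)\sfzbar_k(X_i)^\top$, so the exponent is inherited from Theorem~\ref{thm:properties} without change.

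The one place the conclusion genuinely sharpens relative to blindly substituting $S\equiv 1$ into \eqref{EBm} is the nuisance factor. In the ATE derivation the term $\Vert(\hat a-1)/a\Vert_2\Vert\hat b-b\Vert_2$ appears when the raw score $\calE_a(\hat a;O_1) = A_1\hat a(X_1)-1$ collides with a diagonal block $Q_1 = A_1\sfzbar_k(X_1)\sfzbar_k(X_1)^\top$ inside the ``sandwich'' $\sfzbar_k(X_1)^\top[\cdots]\sfzbar_k(X_2)$: using $A_1^2 = A_1$, one is left with the conditional mean $\bbE[A_1(\hat a(X_1)-1)\mid X_1] = (\hat a(X_1)-1)/a(X_1)$. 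With $S\equiv 1$ this collision instead produces $\calE_a(\hat a,\sfzbar_k;O_1)^\top Q_1$ with conditional mean $(a(X_1)-\hat a(X_1))\sfzbar_k(X_1)^\top\sfzbar_k(X_1)\sfzbar_k(X_1)^\top$, i.e.\ only $\Vert\hat a-a\Vert_2$, no $1/a$ weight and no $-1$ offset. The same mechanism converts the remaining two terms of \eqref{EBm} into $\Vert\hat a-a\Vert_2\Vert\hat b-b\Vert_2$ and $\Vert\hat a-a\Vert_2\Vert\hat b-b\Vert_\infty \wedge \Vert\hat a-a\Vert_\infty\Vert\hat b-b\Vert_2$, and Assumption~\ref{cond:nuis}(iii) is no longer invoked because $\lambda\equiv 1$ is bounded outright. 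Collecting terms yields the asserted bound \eqref{special EBm}.

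I expect the main obstacle to be bookkeeping rather than conceptual: one must locate every place in the (lengthy) Appendix~\ref{app:main} argument where the $A$-weight of $Q_i$ or a $1/a$ factor from $\calE_a$ is used, and verify that each specializes as above—in particular, that no surviving monomial in the $J\asymp\log n$ expansion reintroduces a $1/a$-type factor once $S\equiv 1$, and that the leave-out cancellations between the $\hat\IIFF_{j,j,k}(\hat\Omega)$ terms go through unchanged with the $A$-free $Q_i$. No new technique beyond Theorem~\ref{thm:properties} is needed.
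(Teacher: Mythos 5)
Your proposal is correct and follows the same route the paper intends: the corollary is obtained from Theorem~\ref{thm:properties} via the DRF notation correspondence of Section~\ref{sec:drf} with $S\equiv 1$, so that $\lambda\equiv 1$, $Q_i=\sfzbar_k(X_i)\sfzbar_k(X_i)^\top$ carries no $A$-weight, and the score--$Q_i$ collisions produce conditional means $a-\hat a$ and $b-\hat b$ rather than the $1/a$-weighted, $-1$-offset quantities of the ATE case. Your tracing of exactly where the $\Vert(\hat a-1)/a\Vert$ factors arise and why they collapse to $\Vert\hat a-a\Vert$ is a correct and more explicit account than the paper's own (essentially omitted) justification.
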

    Note that the variance bound is improved in a similar manner and is omitted here.
    
    
    \section{Discussion}
    \label{sec:discussion}
    
    In this paper, we propose a novel class of HOIF estimators, stable HOIF (sHOIF) estimators, for the doubly robust functionals (DRFs) characterized in \citet{rotnitzky2021characterization}. They are semiparametric efficient under the minimal \Holder{}-smoothness condition $\frac{s_{a} + s_{b}}{2} > \frac{d}{4}$ of \citet{robins2009semiparametric}, allowing the dimension $k$ of the basis function diverging at a rate just slower than the sample size $n$. As can be seen from Theorem \ref{thm:properties}, sHOIF estimators have improved rate of convergence than eHOIF developed in \citet{liu2017semiparametric}. More importantly, as well documented in the simulation studies of \citet{liu2020nearly} and \citet{wanis2023machine}, the sHOIF estimators also have significantly better finite-sample performance over existing higher-order estimators in practice, making them more amenable for tasks such as testing if the bias of a first-order DML estimator $\hat{\psi}_{1}$ of a causal effect is dominated by its standard error \citep{liu2020nearly, liu2021assumption, wanis2023machine}. Finally, we end our paper by mentioning several future research directions:
    \begin{enumerate}[label = (\arabic*)]
        \item It will be interesting to study if one can extend the idea of sHOIF estimators to the non-$\sqrt{n}$-estimable regimes by, for instance, estimating $\Omega$ via some shrinkage or regularized algorithms. As conjectured in \citet{robins2016technical}, the minimax convergence rate of the functionals studied in this paper may depend on the regularity of the density of the covariates $X$. Hence it is expected that the shrinkage or regularization also depends on the density of $X$. Simulation studies in \citet{liu2020nearly} suggest the nonlinear shrinkage covariance matrix estimators of \citet{ledoit2012nonlinear} could be a viable option. Preliminary simulation studies in \citet{liu2020nearly} and \citet{wanis2023machine} suggest that the performance of these shrinkage covariance matrix estimators does degrade with the smoothness of the design density.

        \item As pointed out in \citet{kennedy2022minimax}, their Second-Order R-Learner (SORL) for CATE also involves inverting large Gram matrices of certain basis functions (in which they use the Legendre polynomials) under additional complexity-reducing assumptions on the covariates $X$. It will be interesting to investigate if the sHOIF estimators can be generalized to the CATE estimation problems and stabilize their SORL or even HORL estimators.
        
        \item Another important open problem was also mentioned in \citet{van2014higher, liu2020nearly, liu2021assumption}. To define HOIFs for DRFs, one needs to choose a set of $k$-dimensional basis functions $\sfzbar_{k}$ or an approximation kernel $K$ of the Kronecker delta function, ideally in prior to the data analysis. However, such a strategy seems to go against the current data analytic paradigm, which strongly advocates learning representations (e.g. in the form of bases or kernels) adaptively from data rather than choosing some fixed bases/frames {\it a priori}. Prominent examples include DNNs, autoencoders, and GANs. It is thus interesting to construct HOIF estimators along different basis directions and then select one or aggregate all, guided by certain optimality criterion. We leave this important and difficult problem to future endeavor.
        
        \item It will be interesting to also derive HOIFs and sHOIFs for identifiable causal effect functionals in graphical models with latent variables \citep{bhattacharya2022semiparametric} and implicitly defined functionals \citep{robins2016technical, ai2021unified} in general semiparametric regression problems for improved quality of estimation and statistical inference, which however requires extension of the current work to $U$-processes, a much more difficult research problem that we are working on in a separate paper.
    \end{enumerate}
    
    \bibliographystyle{plainnat}
    \bibliography{\myreferences}
    
    \newpage
    
    \begin{appendices}

    \section{Proof of the variance part of Proposition \ref{prop:soif}}
    \label{app:soif_var}
    
    \subsection{Proof of Lemma \ref{lem:nc}}
    \label{app:nc}
    
    We prove the second statement \eqref{nc-bad} first.
    \begin{align*}
        & \ \left\vert \bbE \left[ \sfzbar_{k} (X_{1})^{\top} \sfzbar_{k} (X_{2}) \sfzbar_{k} (X_{3})^{\top} Q_{3, 4}^{j} \sfzbar_{k} (X_{4}) \right] \right\vert \\
        = & \ \left\vert \bbE [\sfzbar_{k} (X_{1})]^{\top}  \bbE [\sfzbar_{k} (X_{2})] \bbE \left[ \sfzbar_{k} (X_{3})^{\top} Q_{3, 4}^{j} \sfzbar_{k} (X_{4}) \right] \right\vert \\
        \lesssim & \ \left\vert \bbE \left[ \sfzbar_{k} (X_{3})^{\top} Q_{3, 4}^{j} \sfzbar_{k} (X_{4}) \right] \right\vert \\
        \leq & \left\{ \bbE \left[ \sfzbar_{k} (X_{4})^{\top} Q_{3, 4}^{j} \sfzbar_{k} (X_{3}) \sfzbar_{k} (X_{3})^{\top} Q_{3, 4}^{j} \sfzbar_{k} (X_{4}) \right] \right\}^{1 / 2} \lesssim k^{j}.
    \end{align*}
    
    For the first statement \eqref{nc-good}, since $j_{1} > 0$, there must exist at least one $Q_{1, 2}$ between $\sfzbar_{k} (X_{3})$ and $\sfzbar_{k} (X_{4})$ in \eqref{nc-good}. We conduct induction on $j$. $j = 1$ has been proved in the main text. Suppose \eqref{nc-good} holds for $j - 1$. Then by applying Cauchy-Schwarz inequality, one can easily exhibit \eqref{nc-good} for $j$.
    
    \section{Derivation of \eqref{toif_eb}}
    \label{app:toif_eb}
    
    Recall that the kernel estimation bias $\EB_{2, k} (\hat{\psi}_{1})$ of $\hat{\IIFF}_{2, 2, k} (\hat{\Omega})$ truncated at level $j = 2$ is dominated by
    \begin{align}
        \frac{1}{n^{2}} \bbE \left[ \calE_{a} (\hat{a}; O_{1}) \sfzbar_{k} (X_{1})^{\top} Q_{1, 2}^{2} \sfzbar_{k} (X_{2}) \calE_{b} (\hat{b}; O_{2}) \right] + \frac{1}{n} \bbE \left[ \calE_{a} (\hat{a}; O_{1}) \sfzbar_{k} (X_{1})^{\top} \right] \bbE [Q_{3}^{2}] \bbE \left[ \sfzbar_{k} (X_{2}) \calE_{b} (\hat{b}; O_{2}) \right] \label{bias_part}
    \end{align}
    which equals the sum of \eqref{bias_j2_1} and \eqref{bias_j2_2}.
    
    We further consider the kernel estimation bias of $\hat{\IIFF}_{3, 3, k} (\hat{\Omega})$ as an estimator of \eqref{bias_j1}, the dominating term of $\EB_{2, k} (\hat{\psi}_{1})$ truncated at level $j = 1$:
    \begin{align*}
        & \ \bbE \left[ \hat{\IIFF}_{3, 3, k} (\hat{\Omega}) - \eqref{bias_j1} \right] \\
        = & \ \bbE \left[ \calE_{a} (\hat{a}; O_{1}) \sfzbar_{k} (X_{1})^{\top} \left( Q_{1, 2}^{2} - \hat{\Omega} Q_{1, 2}^{2} \hat{\Omega} \right) \sfzbar_{k} (X_{2}) \calE_{b} (\hat{b}; O_{2}) \right] + O (n^{-1}) \\
        = & - \frac{2}{n^{2}} \bbE \left[ \calE_{a} (\hat{a}; O_{1}) \sfzbar_{k} (X_{1})^{\top} Q_{1, 2}^{2} \sfzbar_{k} (X_{2}) \calE_{b} (\hat{b}; O_{2}) \right] + \mathsf{Rem} + O (n^{-1})
    \end{align*}
    where the remainder term $\mathsf{Rem}$ can be shown to dominated by the first term. Adding this term to \eqref{bias_part}, we conclude that the dominating part of the kernel estimation bias of $\hat{\IIFF}_{(2, 2) \rightarrow (3, 3), k} (\hat{\Omega})$ cancels with $\bbE [\tilde{\hat{\IIFF}}_{4, 4, k}]$.
    
    \section{Proof of the kernel estimation bias bound in Theorem \ref{sec:properties}}
    \label{app:main}
    
    We divide the proof of Theorem \ref{sec:properties} into several steps. First, in Section \ref{app:alternative}, we provide alternative characterization of $m$-th order sHOIFs $\hat{\IIFF}_{(2, 2) \rightarrow (m, m), k} (\hat{\Omega})$ to facilitate the bias control.
    
    \subsection{Alternative characterization of sHOIFs} \label{app:alternative}
    We have the following alternative characterization of $m$-th order sHOIFs, which can be shown by induction:
    \begin{equation} \label{hoif_alternative}
        \begin{split}
        & \ \hat{\IIFF}_{2, 2, k} - \hat{\IIFF}_{(2, 2) \rightarrow (m, m), k} (\hat{\Omega}) \\
        \equiv & \ \bbU_{n, 2} \left[ \calE_{a} (\hat{a}; O_{1}) \sfzbar_{k} (X_{1})^{\top} \left\{ \sum_{j = 0}^{m - 1} (-1)^{j} {m - 1 \choose j} \bbU_{n - 2, j - 1} \left( \hat{\Omega} \prod_{s = 3}^{j + 1} Q_{s} \hat{\Omega} \right) \right\} \sfzbar_{k} (X_{2}) \calE_{b} (\hat{b}; O_{2}) \right] \\
        \equiv & \ \bbU_{n, 2} \left[ \calE_{a} (\hat{a}; O_{1}) \sfzbar_{k} (X_{1})^{\top} \left\{ \sum_{j = 1}^{m - 1} (-1)^{j} {m - 1 \choose j} \bbU_{n - 2, j} \left( \hat{\Omega} \prod_{s = 3}^{j + 1} Q_{s} \hat{\Omega} - \bbI \right) \right\} \sfzbar_{k} (X_{2}) \calE_{b} (\hat{b}; O_{2}) \right]
        \end{split}
    \end{equation}
    where $\hat{\Omega} \prod_{s = 3}^{2} Q_{s} \hat{\Omega}$ and $\hat{\Omega} \prod_{s = 3}^{1} Q_{s} \hat{\Omega}$ are understood to be $\hat{\Omega}$ and $\bbI$, respectively.
    
    Armed with \eqref{hoif_alternative}, we can characterize the kernel estimation bias of $\hat{\IIFF}_{(2, 2) \rightarrow (m, m), k} (\hat{\Omega})$ as follows.
    
    \begin{lemma} \label{lem:faux_cp}
        \begin{align}
            & \ \bbE \left[ \hat{\IIFF}_{2, 2, k} - \hat{\IIFF}_{(2, 2) \rightarrow (m, m), k} (\hat{\Omega}) \right] \nonumber \\
            = & \ \sum_{j = 1}^{m - 1} (-1)^{j} {m - 1 \choose j} \sum_{\ell = 1}^{j - 1} \sum_{\substack{S \subseteq [j - 1], \\ |S| = \ell}} \bbE \left[ \calE_{a} (\hat{a}; O_{m - 1}) \sfzbar_{k} (X_{m - 1})^{\top} \prod_{s = 0}^{j - 1} \left\{ Q_{s} (\hat{\Omega} - \bbI)^{\mathbbm{1} \{s \in S\}} \right\} \sfzbar_{k} (X_{m}) \calE_{b} (\hat{b}; O_{m}) \right] \label{bias_general} \\
            = & \ \sum_{\cp = 1}^{m - 1} \sum_{j = \cp}^{m - 1} (-1)^{j} {m - 1 \choose j} \sum_{\substack{S \subseteq [j - 1], \\ |S| = \cp}} \bbE \left[ \calE_{a} (\hat{a}; O_{m - 1}) \sfzbar_{k} (X_{m - 1})^{\top} \prod_{s = 0}^{j - 1} \left\{ Q_{s} (\hat{\Omega} - \bbI)^{\mathbbm{1} \{s \in S\}} \right\} \sfzbar_{k} (X_{m}) \calE_{b} (\hat{b}; O_{m}) \right]. \label{bias_cp}
        \end{align}
    \end{lemma}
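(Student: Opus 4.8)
The plan is to deduce Lemma \ref{lem:faux_cp} from the alternative characterization \eqref{hoif_alternative} — which is already available by the induction stated just above it — by a purely algebraic expansion followed by combinatorial bookkeeping. First apply $\bbE[\cdot]$ to the last line of \eqref{hoif_alternative}. Then, inside each $j$-th summand $(-1)^{j}\binom{m-1}{j}\,\bbU_{n-2,\,j-1}\!\left(\hat\Omega\prod_{s=3}^{j+1}Q_{s}\hat\Omega - \bbI\right)$, open every copy of the feasible inverse Gram matrix via the trivial splitting $\hat\Omega = \bbI + (\hat\Omega - \bbI)$, keeping the fluctuation $(\hat\Omega - \bbI)$ as a formal symbol — that is, we do \emph{not} Neumann-expand it into powers of $\bbI - \hat\Sigma$ at this stage, so the identity stays exact. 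Multiplying the product out and collecting, each resulting monomial is labelled by the set $S$ of internal slots at which a genuine $(\hat\Omega-\bbI)$ is retained; at a slot outside $S$ the factor is $\bbI$ and the two adjacent $Q$'s become consecutive, so the monomial is a sandwiched product of $Q_{3},\ldots,Q_{j+1}$ (in order) with $|S|$ interspersed $(\hat\Omega-\bbI)$'s. This is what puts each summand into the shape $\prod_{s}\{Q_{s}(\hat\Omega-\bbI)^{\indc{s\in S}}\}$ on the right of \eqref{bias_general}.

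The key cancellation is that the monomials with $S=\emptyset$ (all internal $\hat\Omega$'s set to $\bbI$) conspire with the subtracted $\bbI$'s to vanish after summation over $j$. Two facts are used. First, the ``bread'' indices $\{1,2\}$ are disjoint from the inner summation indices $\{3,\ldots,j+1\}$, so in a monomial in which no $(\hat\Omega-\bbI)$ survives — hence no $\hat\Omega$ remains to couple to the inner indices — one may take the conditional expectation over $Q_{3},\ldots,Q_{j+1}$ through the sandwich and use $\bbE[Q]=\Sigma=\bbI$, collapsing the whole $Q$-block to $\bbI$ and matching the subtracted $\bbI$. Second, the leftover coefficient over the relevant range of $j$ is $\sum_{j}(-1)^{j}\binom{m-1}{j}$, which is $0$ since $m\geq 2$. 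The boundary cases $j=1,2$ and the empty-product conventions ($\hat\Omega\prod_{s=3}^{2}Q_{s}\hat\Omega=\hat\Omega$, $\hat\Omega\prod_{s=3}^{1}Q_{s}\hat\Omega=\bbI$) must be checked by hand so these cancellations are consistent at the ends of the range. What survives is exactly \eqref{bias_general}; the reindexed form \eqref{bias_cp} follows by swapping the two outer sums — summing first over $\cp:=|S|$ from $1$ to $m-1$ and then over $j$ from $\cp$ to $m-1$ — with the would-be boundary term $j=\cp$ vacuous because a set of size $j-1=\cp-1$ admits no subset of size $\cp$.

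I expect the main obstacle to be the combinatorial accounting: tracking, across all $m-1$ summands simultaneously, which monomials with a given label $S$ and $Q$-block pattern receive which binomial weight, and checking that the weights of all $S=\emptyset$ monomials sum to zero while the $|S|\geq 1$ monomials are left untouched and correctly coalesced into the single outer $Q$-product $\prod_{s=0}^{j-1}$ of \eqref{bias_general}. A secondary subtlety, relevant only once these terms are later \emph{bounded} rather than merely enumerated, is that each retained $\hat\Omega-\bbI$ is still correlated with the inner indices $3,\ldots,j+1$ (because $\hat\Sigma$ is formed from all $n$ estimation points); that correlation is harmless for the present lemma, which keeps $\hat\Omega-\bbI$ symbolic, but it is precisely why the subsequent leave-out analysis and matrix-concentration arguments (à la \citet{rudelson1999random}, as in the proof sketch of Proposition \ref{prop:soif}) are needed to turn \eqref{bias_cp} into the quantitative bound \eqref{EBm}.
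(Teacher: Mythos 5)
Your argument is essentially the paper's own proof: starting from \eqref{hoif_alternative}, split each $\hat{\Omega}$ as $\bbI+(\hat{\Omega}-\bbI)$, expand multilinearly, index each monomial by the retention set $S$, cancel the $S=\emptyset$ monomial against the subtracted $\bbI$ via independence of the distinct $U$-statistic indices and $\bbE[Q]=\Sigma=\bbI$, and then reindex by the copy number $\cp=|S|$ to pass from \eqref{bias_general} to \eqref{bias_cp}. One caveat: your secondary appeal to $\sum_{j}(-1)^{j}\binom{m-1}{j}=0$ is both unnecessary (the per-$j$ matching of the collapsed $Q$-block with the subtracted $\bbI$, which you state first, already annihilates each $S=\emptyset$ term individually) and, over the range $j=1,\dots,m-1$ actually appearing in \eqref{hoif_alternative}, false since $\sum_{j=1}^{m-1}(-1)^{j}\binom{m-1}{j}=-1$; keep only the per-$j$ cancellation, which is exactly what the paper does, and note that under the paper's convention $[j-1]=\{0,\dots,j-1\}$ the set has $j$ elements, so the $j=\cp$ term in \eqref{bias_cp} is not vacuous.
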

    
    \begin{proof}
        To avoid notation clutter, we introduce the alias term $Q_{0} \coloneqq \bbI$. We also overload the notation $[l] \coloneqq \{0, 1, \cdots, l\}$ for any non-negative integer $l$. We first prove \eqref{bias_general}:
        \begin{align*}
            & \ \bbE \left[ \hat{\IIFF}_{2, 2, k} - \hat{\IIFF}_{(2, 2) \rightarrow (m, m), k} (\hat{\Omega}) \right] \\
            = & \ \sum_{j = 1}^{m - 1} (-1)^{j} {m - 1 \choose j} \bbE \left[ \calE_{a} (\hat{a}; O_{m - 1}) \sfzbar_{k} (X_{m - 1})^{\top} \left( \prod_{s = 0}^{j - 1} (Q_{s} \hat{\Omega}) - \bbI \right) \sfzbar_{k} (X_{m}) \calE_{b} (\hat{b}; O_{m}) \right] \\
            = & \ \sum_{j = 1}^{m - 1} (-1)^{j} {m - 1 \choose j} \sum_{\ell = 1}^{j - 1} \sum_{\substack{S \subseteq [j - 1], \\ |S| = \ell}} \bbE \left[ \calE_{a} (\hat{a}; O_{m - 1}) \sfzbar_{k} (X_{m - 1})^{\top} \prod_{s = 0}^{j - 1} \left\{ Q_{s} (\hat{\Omega} - \bbI)^{\mathbbm{1} \{s \in S\}} \right\} \sfzbar_{k} (X_{m}) \calE_{b} (\hat{b}; O_{m}) \right]
        \end{align*}
        where the second equality follows from decomposing each $\hat{\Omega}$ into $\bbI - (\bbI - \hat{\Omega})$ for $j = 2, \cdots, m - 1$.
        
        For \eqref{bias_cp}, we proceed by reorganizing all the summands in \eqref{bias_general} according to the copy number $\cp$ of $\bbI - \hat{\Omega}$ in the product:
        {\small
        \begin{align*}
            \eqref{bias_general} = & \ \sum_{\cp = 1}^{m - 1} \sum_{j = \cp}^{m - 1} (-1)^{j} {m - 1 \choose j} \sum_{\substack{S \subseteq [j - 1], \\ |S| = \cp}} \bbE \left[ \calE_{a} (\hat{a}; O_{m - 1}) \sfzbar_{k} (X_{m - 1})^{\top} \prod_{s = 0}^{j - 1} \left\{ Q_{s} (\hat{\Omega} - \bbI)^{\mathbbm{1} \{s \in S\}} \right\} \sfzbar_{k} (X_{m}) \calE_{b} (\hat{b}; O_{m}) \right].
        \end{align*}
        }
    \end{proof}
    
    \subsection{Analysis by matrix expansion and combinatorics}
    \label{app:main_bias}
    
    After different terms in the kernel estimation bias are reorganized as in \eqref{bias_cp}, we perform the following expansion of $\bbI - \hat{\Omega}$:
    \begin{equation} \label{matrix_expansion}
        \begin{split}
        \hat{\Omega} - \bbI & = \sum_{j = 1}^{J} (\bbI - \hat{\Sigma})^{j} + (\bbI - \hat{\Sigma})^{J + 1} \hat{\Omega}.
        \end{split}
    \end{equation}
    We denote the above expansion up to $J$-th order as
    \begin{align*}
        [\hat{\Omega} - \bbI]_{J} \coloneqq \sum_{j = 1}^{J} \left( \bbI - \hat{\Sigma} \right)^{j}.
    \end{align*}
    
    We then proceed by collecting different terms together by the copy number $\cp$ of $\hat{\Sigma} - \bbI$, and obtain the following lemma.
    
    \begin{lemma} \label{lem:real_cp}
    With $\bbI - \hat{\Omega}$ replaced by $[\bbI - \hat{\Omega}]_{J}$, \eqref{bias_cp} can be rewritten as $\overset{(m - 1) J}{\underset{\cp = 1}{\sum}} \fM_{\cp}$, where
    \begin{equation} \label{bias_real_cp}
        \fM_{\cp} \coloneqq \sum_{j = 1}^{m - 1} (-1)^{j} {m - 1 \choose j} \sum_{\cp' = 1}^{\cp \wedge j} \sum_{\substack{S \subseteq [j - 1], \\ |S| = \cp'}} \sum_{\substack{\{\ell_{s}, s \in S\} \subseteq \{1, \cdots, J\}^{\cp'}, \\ \underset{s \in S}{\sum} \ell_{s} = \cp}} \bbE \left[ \begin{array}{c} 
        \calE_{a} (\hat{a}; O_{m - 1}) \sfzbar_{k} (X_{m - 1})^{\top} \\
        \times \prod\limits_{s = 0}^{j - 1} \left\{ Q_{s} (\bbI - \hat{\Sigma})^{\ell_{s} \mathbbm{1} \{s \in S\}} \right\} \\
        \times \ \sfzbar_{k} (X_{m}) \calE_{b} (\hat{b}; O_{m})
        \end{array} \right].
    \end{equation}
    \end{lemma}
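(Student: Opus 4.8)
The plan is to obtain \eqref{bias_real_cp} from \eqref{bias_cp} by a single substitution followed by a reorganization of the resulting finite sum according to the total power of $\bbI-\hat\Sigma$ that is produced; no analytic estimate is involved. I would start from the expression supplied by Lemma \ref{lem:faux_cp}, equation \eqref{bias_cp}, first renaming its outer index $\cp$ (which there counts $|S|$) to $\cp'$, so that the symbol $\cp$ is free for the new counter. In that expression each summand contains the non-commutative product $\prod_{s=0}^{j-1}\{Q_s(\hat\Omega-\bbI)^{\mathbbm{1}\{s\in S\}}\}$ with exactly $\cp'=|S|$ explicit factors $\hat\Omega-\bbI$, one at each position $s\in S$ (recall $Q_0=\bbI$).

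Into each such factor I would plug the truncated expansion \eqref{matrix_expansion}, keeping only the polynomial part $[\hat\Omega-\bbI]_J=\sum_{\ell=1}^J(\bbI-\hat\Sigma)^\ell$ and discarding the order-$(J+1)$ remainder, exactly as the statement of the lemma instructs (its negligibility is handled separately). Because the $\hat\Omega-\bbI$ factors sit at distinct positions, each is replaced independently, so no commutator bookkeeping arises. Writing $\ell_s\in\{1,\dots,J\}$ for the power chosen at position $s\in S$ and multiplying out the product of sums, each summand of \eqref{bias_cp} becomes a sum over tuples $(\ell_s)_{s\in S}\in\{1,\dots,J\}^{\cp'}$ of terms $(-1)^j\binom{m-1}{j}\,\bbE[\calE_a(\hat a;O_{m-1})\sfzbar_k(X_{m-1})^\top\prod_{s=0}^{j-1}\{Q_s(\bbI-\hat\Sigma)^{\ell_s\mathbbm{1}\{s\in S\}}\}\sfzbar_k(X_m)\calE_b(\hat b;O_m)]$; for $s\notin S$ the exponent is simply $0$, which is why the formula carries $\ell_s\mathbbm{1}\{s\in S\}$ rather than $\ell_s$. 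Each such term has a well-defined total degree $\cp:=\sum_{s\in S}\ell_s$ in $\bbI-\hat\Sigma$.

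The last step is to collect all of these terms according to their common total degree $\cp$, defining $\fM_\cp$ to be the collected block. Since $\ell_s\ge1$ for every $s\in S$, a term contributes to $\fM_\cp$ only if $\cp'=|S|\le\cp$, and since $S\subseteq[j-1]$ (a set of $j$ elements) one also needs $\cp'\le j$; hence for fixed $\cp$ the count $\cp'$ runs over $1,\dots,\cp\wedge j$. Conversely $\ell_s\le J$ gives $\cp\le\cp' J\le jJ\le(m-1)J$, so $\fM_\cp$ is empty once $\cp>(m-1)J$ and the outer index therefore runs over $1,\dots,(m-1)J$. Reading off the remaining constraints — $j$ from $1$ to $m-1$, then $S\subseteq[j-1]$ with $|S|=\cp'$, then a length-$\cp'$ tuple $(\ell_s)$ in $\{1,\dots,J\}^{\cp'}$ summing to $\cp$ — reproduces \eqref{bias_real_cp} verbatim, which establishes that \eqref{bias_cp} with $\hat\Omega-\bbI$ replaced by $[\hat\Omega-\bbI]_J$ equals $\sum_{\cp=1}^{(m-1)J}\fM_\cp$.

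The one point that needs care — and the place an error could creep in — is the combinatorial bookkeeping of this regrouping: verifying that assigning to each fully expanded summand the data $(j,S,(\ell_s)_{s\in S})$ together with the value $\cp=\sum_{s\in S}\ell_s$ sets up a genuine partition of the index set, so that no term is counted twice or omitted, and that the displayed summation ranges (in particular the upper limit $(m-1)J$, with no ceilings) are exactly the nonempty ones. This "copy-number" reindexing is routine but is where an off-by-one slip would be easy to make. Note that the $\hat\Sigma$ surviving inside each $\fM_\cp$ still involves the whole estimation sample; disentangling it via leave-out and ghost-copy arguments and bounding the individual $\fM_\cp$ is the substance of the subsequent steps, not of this lemma.
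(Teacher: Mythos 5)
Your proposal is correct and follows essentially the same route as the paper: substitute the truncated expansion $[\hat\Omega-\bbI]_J=\sum_{\ell=1}^J(\bbI-\hat\Sigma)^{\ell}$ independently at each position $s\in S$, distribute the products over the sums, and regroup the resulting terms by the total copy number $\cp=\sum_{s\in S}\ell_s$, with the same bookkeeping of the summation ranges ($\cp'\le\cp\wedge j$ and $\cp\le(m-1)J$). The paper's proof is exactly this reindexing, written as a short chain of equalities.
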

    
    \begin{proof}
    The proof follows from a few lines of algebra.
    \begin{align*}
        & \ \sum_{\cp' = 1}^{m - 1} \sum_{j = \cp'}^{m - 1} (-1)^{j} {m - 1 \choose j} \sum_{\substack{S \subseteq [j - 1], \\ |S| = \cp'}} \bbE \left[ \calE_{a} (\hat{a}; O_{m - 1}) \sfzbar_{k} (X_{m - 1})^{\top} \prod_{s = 0}^{j - 1} \left\{ Q_{s} [\hat{\Omega} - \bbI]_{J}^{\mathbbm{1} \{s \in S\}} \right\} \sfzbar_{k} (X_{m}) \calE_{b} (\hat{b}; O_{m}) \right] \\
        = & \ \sum_{\cp' = 1}^{m - 1} \sum_{j = \cp'}^{m - 1} (-1)^{j} {m - 1 \choose j} \sum_{\substack{S \subseteq [j - 1], \\ |S| = \cp'}} \bbE \left[ \begin{array}{c}
        \calE_{a} (\hat{a}; O_{m - 1}) \sfzbar_{k} (X_{m - 1})^{\top} \\
        \times \prod\limits_{s = 0}^{j - 1} \left\{ Q_{s} \left( \sum\limits_{\ell_{s} = 1}^{J} (\bbI - \hat{\Sigma})^{\ell_{s}} \right)^{\mathbbm{1} \{s \in S\}} \right\} \\
        \times \ \sfzbar_{k} (X_{m}) \calE_{b} (\hat{b}; O_{m})
        \end{array} \right] \\
        = & \ \sum_{\cp = 1}^{(m - 1) J} \sum_{\cp' = 1}^{(m - 1) \wedge \cp} \sum_{j = \cp'}^{m - 1} (-1)^{j} {m - 1 \choose j} \sum_{\substack{S \subseteq [j - 1], \\ |S| = \cp'}} \sum_{\substack{\{\ell_{s}, s \in S\} \subseteq \{1, \cdots, J\}^{\cp'}, \\ \underset{s \in S}{\sum} \ell_{s} = \cp}} \bbE \left[ \begin{array}{c} 
        \calE_{a} (\hat{a}; O_{m - 1}) \sfzbar_{k} (X_{m - 1})^{\top} \\
        \times \prod\limits_{s = 0}^{j - 1} \left\{ Q_{s} (\bbI - \hat{\Sigma})^{\ell_{s} \mathbbm{1} \{s \in S\}} \right\} \\
        \times \ \sfzbar_{k} (X_{m}) \calE_{b} (\hat{b}; O_{m})
        \end{array} \right] \\
        = & \ \sum_{\cp = 1}^{(m - 1) J} \sum_{j = 1}^{m - 1} (-1)^{j} {m - 1 \choose j} \sum_{\cp' = 1}^{\cp \wedge j} \sum_{\substack{S \subseteq [j - 1], \\ |S| = \cp'}} \sum_{\substack{\{\ell_{s}, s \in S\} \subseteq \{1, \cdots, J\}^{\cp'}, \\ \underset{s \in S}{\sum} \ell_{s} = \cp}} \bbE \left[ \begin{array}{c} 
        \calE_{a} (\hat{a}; O_{m - 1}) \sfzbar_{k} (X_{m - 1})^{\top} \\
        \times \prod\limits_{s = 0}^{j - 1} \left\{ Q_{s} (\bbI - \hat{\Sigma})^{\ell_{s} \mathbbm{1} \{s \in S\}} \right\} \\
        \times \ \sfzbar_{k} (X_{m}) \calE_{b} (\hat{b}; O_{m})
        \end{array} \right].
    \end{align*}
    \end{proof}
    
    To proceed further, we also need the following elementary lemma:
    \begin{lemma}
    \label{lem:expansion}
    Given $j$ nonnegative integers $\ell_{0}, \cdots, \ell_{j - 1} \in \bbZ_{\geq 0}$ and let $\bar{\ell}_{h} \coloneqq \sum\limits_{s = 0}^{h} \ell_{s}$ for $h = 0, \cdots, j - 1$ with $\bar{\ell}_{-1} \equiv 0$,
        \begin{equation}
            \begin{split}
                & \ \bbE \left[ \calE_{a} (\hat{a}; O_{m - 1}) \sfzbar_{k} (X_{m - 1})^{\top} \prod\limits_{s = 0}^{j - 1} \left\{ Q_{s} (\bbI - \hat{\Sigma})^{\ell_{s}} \right\} \sfzbar_{k} (X_{m}) \calE_{b} (\hat{b}; O_{m}) \right] \\
                = & \ n^{- \bar{\ell}_{j - 1}} \sum_{i_{1} = 1}^{n} \cdots \sum_{i_{\bar{\ell}_{j - 1}} = 1}^{n} \bbE \left[ \calE_{a} (\hat{a}; O_{m - 1}) \sfzbar_{k} (X_{m - 1})^{\top} \left\{ \prod_{s = 0}^{j - 1} Q_{s} \prod_{h = \bar{\ell}_{s - 1} + 1}^{\bar{\ell}_{s}} (\bbI - Q_{i_{h}}) \right\} \sfzbar_{k} (X_{m}) \calE_{b} (\hat{b}; O_{m}) \right].
            \end{split}
        \end{equation}
    \end{lemma}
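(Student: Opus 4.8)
The plan is to prove Lemma~\ref{lem:expansion} by a purely algebraic expansion, with no probabilistic input beyond linearity of expectation. The single observation that does all the work is the exact operator identity
\begin{equation*}
\bbI - \hat{\Sigma} \;=\; \frac{1}{n}\sum_{i=1}^{n} \left( \bbI - Q_{i} \right),
\end{equation*}
which is immediate from $\hat{\Sigma} = n^{-1}\sum_{i=1}^{n} Q_{i}$; note that, unlike in the bias expansions earlier, one does not even need to invoke $\bbE[Q_{i}] = \bbI$ here.

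Given this, I would expand each ordered power $(\bbI - \hat{\Sigma})^{\ell_{s}} = \prod_{t=1}^{\ell_{s}}(\bbI - \hat{\Sigma})$ by distributing the average inside every one of its $\ell_{s}$ factors, which produces $\ell_{s}$ fresh summation indices ranging over $[n]$ together with a scalar prefactor $n^{-\ell_{s}}$. Concretely, to the block of matrices sitting between $Q_{s}$ and $Q_{s+1}$ I would assign the labels $i_{\bar{\ell}_{s-1}+1}, \dots, i_{\bar{\ell}_{s}}$, so that consecutive blocks use disjoint, consecutively numbered indices and the total index count is $\bar{\ell}_{j-1} = \sum_{s=0}^{j-1}\ell_{s}$, with the product of prefactors collapsing to $n^{-\bar{\ell}_{j-1}}$ out front. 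Because matrix multiplication is not commutative I would keep the product over $h = \bar{\ell}_{s-1}+1, \dots, \bar{\ell}_{s}$ in the prescribed left-to-right order, exactly as displayed in the statement; the convention $\bar{\ell}_{-1} \equiv 0$ and the (possibly empty, when $\ell_{s} = 0$) product $\prod_{h = \bar{\ell}_{s-1}+1}^{\bar{\ell}_{s}}$ then automatically handle the $s = 0$ block following $Q_{0} = \bbI$ and any block with a trivial power. After this substitution, for each fixed tuple $(i_{1},\dots,i_{\bar{\ell}_{j-1}}) \in [n]^{\bar{\ell}_{j-1}}$ the quantity $\sfzbar_{k}(X_{m-1})^{\top}[\cdots]\sfzbar_{k}(X_{m})$ is a single (random) scalar, so by bilinearity of the matrix contraction and linearity of $\bbE$ the finite sums may be pulled outside the expectation, which is exactly the right-hand side claimed.

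The verification is thus entirely bookkeeping: one checks that the cutoffs $\bar{\ell}_{s-1}+1$ and $\bar{\ell}_{s}$ correctly delimit the $\ell_{s}$ indices belonging to block $s$, that no index is reused across blocks, and that the edge cases $\ell_{s} = 0$ and $s = 0$ behave as intended. I expect the only place to tread carefully is the off-by-one matching between the ``$h$''-product range written in the statement and the order in which the $\ell_{s}$ copies of $(\bbI - \hat{\Sigma})$ are peeled off; once that is pinned down nothing analytic remains. If one prefers, the same identity also follows by a one-line induction on $j$, splitting off the last factor $Q_{j-1}(\bbI - \hat{\Sigma})^{\ell_{j-1}}$ and applying the expansion above to it, but the direct distribution is the cleanest route.
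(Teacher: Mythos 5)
Your proposal is correct and is essentially identical to the paper's own proof: both rest on the exact identity $\bbI - \hat{\Sigma} = n^{-1}\sum_{i=1}^{n}(\bbI - Q_{i})$, distribute it across every factor of each $(\bbI - \hat{\Sigma})^{\ell_{s}}$ with the same $\bar{\ell}_{s}$-based index bookkeeping, and pull the resulting finite sums outside the expectation by linearity. Nothing further is needed.
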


    \begin{proof}
    Without essential loss of generality, we take $\calE_{a} (\hat{a}; O) \equiv A, \calE_{b} (\hat{b}; O) \equiv 1$ to simplify the exposition. Repeatedly invoking the identity $\bbI - \hat{\Sigma} \equiv n^{-1} \sum_{i = 1}^{n} (\bbI - Q_{i})$, together with the convention $\prod_{h = \ell + 1}^{\ell} (\cdot)_{h} \equiv \bbI$ for any nonnegative integer $\ell$, we have
    \begin{align*}
        & \ \bbE \left[ A_{m - 1} \sfzbar_{k} (X_{m - 1})^{\top} \prod\limits_{s = 0}^{j - 1} \left\{ Q_{s} (\bbI - \hat{\Sigma})^{\ell_{s}} \right\} \sfzbar_{k} (X_{m}) \right] \\
        = & \ n^{- \bar{\ell}_{j - 1}} \bbE \left[ A_{m - 1} \sfzbar_{k} (X_{m - 1})^{\top} \left\{ \prod_{s = 0}^{j - 1} Q_{s} \prod_{h = 1}^{\ell_{s}} \left[ \sum_{i_{s, h} = 0}^{n} (\bbI - Q_{i_{s, \ell_{s}}}) \right] \right\} \sfzbar_{k} (X_{m}) \right] \\
        = & \ n^{- \bar{\ell}_{j - 1}} \sum_{i_{1} = 1}^{n} \cdots \sum_{i_{\bar{\ell}_{j - 1}} = 1}^{n} \bbE \left[ A_{m - 1} \sfzbar_{k} (X_{m - 1})^{\top} \left\{ \prod_{s = 0}^{j - 1} Q_{s} \prod_{h = \bar{\ell}_{s - 1} + 1}^{\bar{\ell}_{s}} (\bbI - Q_{i_{h}}) \right\} \sfzbar_{k} (X_{m}) \right].
    \end{align*}
    \end{proof}

    With the above preparatory steps, the following ``cancellation lemma'' is the first key milestone towards completing the proof.

    \begin{lemma}
    \label{cancellation lemma}
    For copy numbers satisfying $\cp < \lceil (m - 1) / 2 \rceil$, $$\mathrm{Equation \ } \eqref{bias_real_cp} \equiv 0.$$
    \end{lemma}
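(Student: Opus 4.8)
The plan is to strip \eqref{bias_real_cp} down to a signed sum of polynomials in the truncation order $j$ and then kill it with a binomial identity. Starting from $\fM_{\cp}$ as written in \eqref{bias_real_cp}, I would first apply Lemma~\ref{lem:expansion} to every block $(\bbI-\hat\Sigma)^{\ell_s}$, turning it into $n^{-\ell_s}\sum_{i}\prod_h(\bbI-Q_{i_h})$; since $\sum_{s\in S}\ell_s=\cp$, this introduces exactly $\cp$ ``leave-out'' indices, each ranging over $[n]$. The decisive simplification is the normalization $\Sigma=\bbI$ adopted after Assumption~\ref{cond:b}, which makes $\bbE[\bbI-Q_i]=0$. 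Hence, if a leave-out index is \emph{isolated} — distinct from every other leave-out index and from every structural sample index (those carrying $Q_1,\dots,Q_{j-1}$ and the two external points $X_{m-1},X_m$) — then pulling the expectation over that sample out of the product annihilates the whole term. So I would partition the surviving terms according to their \emph{collision pattern}: which leave-out indices coincide, which land on one of the $j-1$ internal $Q$-positions, and which land on an external point.

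The core of the argument is a reduction step. Once a collision pattern is fixed, any internal position $s\in\{1,\dots,j-1\}$ that is neither struck by a leave-out index nor carries an attached block is \emph{inert}: $Q_s$ is independent of the rest of the product and $\bbE[Q_s]=\bbI$, so it may be replaced by $\bbI$ and excised without disturbing the order of anything else. Therefore the value of a surviving term depends only on the \emph{reduced structure} $R$ — the struck internal positions, the attachment positions, the external collisions, the multiplicities $\ell_s$, and the relative order of all of these — and not on how many inert positions sit around them. It follows that, summing over $j$ and over all order-respecting realizations of $R$ inside a length-$j$ chain, the multiplicity $N_R(j)$ is a polynomial in $j$ whose degree equals the number of independently placeable slots of $R$. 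Splitting $\cp=c_{\mathrm{int}}+c_{\mathrm{free}}+c_{\mathrm{ext}}$ according to whether a leave-out factor hits an internal $Q$, joins a mutual ``free'' cluster (each of size $\ge 2$), or hits an external point, the slots are the at most $|S|\le\cp$ attachment positions, the at most $c_{\mathrm{int}}$ distinct struck internal positions, and the at most $c_{\mathrm{free}}/2$ free clusters, so the degree is at most $|S|+\bigl(c_{\mathrm{int}}+c_{\mathrm{free}}/2\bigr)\le 2\cp$. One also has $N_R(0)=0$, since a nontrivial $R$ ($\cp\ge1$) has at least one attachment position and the count of placements of $|S|\ge1$ attachment positions already carries a factor that vanishes at $j=0$.

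Now the hypothesis $\cp<\lceil(m-1)/2\rceil$ is exactly the arithmetic statement $2\cp\le m-2$, so each $N_R$ is a polynomial of degree at most $m-2=(m-1)-1$. By the standard identity $\sum_{j=0}^{m-1}(-1)^j\binom{m-1}{j}P(j)=0$ for every polynomial $P$ of degree $\le m-2$, together with $N_R(0)=0$,
$$\sum_{j=1}^{m-1}(-1)^j\binom{m-1}{j}N_R(j)=\sum_{j=0}^{m-1}(-1)^j\binom{m-1}{j}N_R(j)=0 .$$
Writing $\eqref{bias_real_cp}=\sum_R V_R\sum_{j=1}^{m-1}(-1)^j\binom{m-1}{j}N_R(j)$ for suitable $j$-independent values $V_R$ then gives $\fM_{\cp}\equiv0$.

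The step I expect to be the genuine obstacle is the reduction paragraph: rigorously controlling the value of a reduced structure (it collapses nicely when several $Q$- and $(\bbI-Q)$-factors lie on a common sample, but can depend on the interleaving when they lie on distinct samples, so terms of equal value must be grouped before the count $N_R(j)$ is read off), and then verifying that $N_R(j)$ is a polynomial of degree \emph{exactly} at most $2\cp$ vanishing at $0$. The factor of $2$ here — each collision between a leave-out factor and a fresh internal $Q$ spends two independently varying slots, the attachment site and the struck site — is precisely what makes the threshold $\lceil(m-1)/2\rceil$ tight: at $\cp=\lceil(m-1)/2\rceil$ a reduced structure can exhibit $m-1$ slots, $N_R$ can have degree $m-1$, and the alternating binomial sum no longer collapses, consistent with the genuinely nonzero kernel estimation bias of Theorem~\ref{thm:properties}.
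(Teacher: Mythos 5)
Your proposal follows essentially the same route as the paper's own proof: expand via Lemma~\ref{lem:expansion}, discard the terms in which a centered factor fails to collide with anything, observe that the surviving $j$-dependent multiplicity is a polynomial in $j$ of degree at most $2\cp\le m-2$ vanishing at $j=0$ (your $|S|+c_{\mathrm{int}}$ count is the paper's $\cp+\cp^{\dag}$ together with the observation $\cp^{\dag}\le\cp$), and annihilate $\sum_{j}(-1)^{j}\binom{m-1}{j}P(j)$ with the differentiated binomial identity. The non-commutativity/interleaving issue you flag as the remaining obstacle is present to exactly the same degree in the paper's argument, which likewise groups the expectations only ``up to permuting the orders'' of the factors before reading off the coefficient.
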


    \begin{proof}
    Again, without loss of generality, we take $\calE_{a} (\hat{a}; O) \equiv A, \calE_{b} (\hat{b}; O) \equiv 1$. Aided by Lemma \ref{lem:expansion}, the summand in Equation \eqref{bias_real_cp} at any given $\cp$ can be rewritten as
    \begin{align}
        & \ \sum_{j = 1}^{m - 1} (-1)^{j} \binom{m - 1}{j} \sum_{\cp' = 1}^{\cp \wedge j} \sum_{\substack{S \subseteq [j - 1], \\ |S| = \cp'}} \sum_{\substack{\{\ell_{s}, s \in S\} \subseteq \{1, \cdots, J\}^{\cp'}, \\ \sum\limits_{s \in S} \ell_{s} = \cp}} \bbE \left[ A_{m - 1} \sfzbar_{k} (X_{m - 1})^{\top} \prod\limits_{s = 0}^{j - 1} \left\{ Q_{s} (\bbI - \hat{\Sigma})^{\ell_{s} \mathbbm{1} \{s \in S\}} \right\} \sfzbar_{k} (X_{m}) \right] \label{key expansion 2} \\
        = & \ n^{- \cp} \sum_{i_{1}, \cdots, i_{\cp} = 1}^{n} \sum_{j = 1}^{m - 1} \binom{m - 1}{j} \sum_{\cp' = 1}^{\cp \wedge j} \sum_{\substack{S \subseteq [j - 1], \\ |S| = \cp'}} \sum_{\substack{\{\ell_{s}, s \in S\} \subseteq \{1, \cdots, J\}^{\cp'}, \\ \sum\limits_{s \in S} \ell_{s} = \cp}} \bbE \left[ \begin{array}{c}
        A_{m - 1} \sfzbar_{k} (X_{m - 1})^{\top} \\
        \times \prod\limits_{s = 0}^{j - 1} (- \ Q_{s}) \prod\limits_{h = \bar{\ell}_{s - 1} + 1}^{\bar{\ell}_{s}} (\bbI - Q_{i_{h}}) \\
        \times \ \sfzbar_{k} (X_{m})
        \end{array} \right] \notag \\
        = & \ n^{- \cp} \sum_{i_{1}, \cdots, i_{\cp} = 1}^{n} \sum_{j = 1}^{m - 1} \binom{m - 1}{j} \sum_{\cp' = 1}^{\cp \wedge j} \sum_{\substack{S \subseteq [j - 1], \\ |S| = \cp'}} \sum_{\substack{\{\ell_{s}, s \in S\} \subseteq \{1, \cdots, J\}^{\cp'}, \\ \sum\limits_{s \in S} \ell_{s} = \cp}} \bbE \left[ \begin{array}{c}
        A_{m - 1} \sfzbar_{k} (X_{m - 1})^{\top} \\
        \times \prod\limits_{s = 0}^{j - 1} \left\{ \begin{array}{c} 
        (\bbI - Q_{s}) \prod\limits_{h = \bar{\ell}_{s - 1} + 1}^{\bar{\ell}_{s}} (\bbI - Q_{i_{h}}) \\
        - \prod\limits_{h = \bar{\ell}_{s - 1} + 1}^{\bar{\ell}_{s}} (\bbI - Q_{i_{h}})
        \end{array} \right\} \\
        \times \ \sfzbar_{k} (X_{m})
        \end{array} \right]. \label{key expansion}
    \end{align}
    Now we introduce another auxiliary copy number $\cp^{\dag}$, collecting all the terms in the above display with $\cp^{\dag}$ many $\bbI - \bbQ_{s}$'s after expanding the following product
    \begin{equation}
    \prod\limits_{s = 0}^{j - 1} \left\{ (\bbI - Q_{s}) \prod\limits_{h = \bar{\ell}_{s - 1} + 1}^{\bar{\ell}_{s}} (\bbI - Q_{i_{h}}) - \prod\limits_{h = \bar{\ell}_{s - 1} + 1}^{\bar{\ell}_{s}} (\bbI - Q_{i_{h}}) \right\} \label{product}
    \end{equation}
    within the expectation of \eqref{key expansion}. Let $\mathsf{p} (n, k)$ be the number of all possible partitions of $n$ into $k$ positive integers. Upon expanding, for any given $\cp^{\dag}$, the expectations are all of the following form:
    \begin{equation}
    \label{key expectation}
    \bbE \left[ A_{m - 1} \sfzbar_{k} (X_{m - 1})^{\top} \prod_{\substack{s \in S \subseteq [j - 1]}, \\ |S| = \cp^{\dag}} (\bbI - Q_{s}) \prod_{l = 1}^{\cp} (\bbI - Q_{i_{l}}) \sfzbar_{k} (X_{m}) \right]
    \end{equation}
    up to permuting the orders of different $(\bbI - Q_{s})$ and $(\bbI - Q_{i_l})$. Hence the coefficient constant of the corresponding expectation (again, up to permutations) is
    \begin{align}
        & \ \sum_{j = 1}^{m - 1} \binom{m - 1}{j} \left\{ \sum_{\ell = 1}^{\cp \wedge j} \binom{j}{\ell} \mathsf{p} (\cp, \ell) \right\} (-1)^{j - \cp^{\dag}} \binom{j}{\cp^{\dag}} \notag \\
        = & \ (-1)^{- \cp^{\dag}} \sum_{j = 0}^{m - 1} (-1)^{j} \binom{m - 1}{j} \binom{j + \cp - 1}{j - 1} \binom{j}{\cp^{\dag}} \notag \\
        = & \ \frac{(-1)^{- \cp^{\dag}}}{\cp^{\dag}! \cp!} \sum_{j = 0}^{m - 1} (-1)^{j} \binom{m - 1}{j} j \prod_{\ell = - \cp^{\dag} + 1}^{\cp - 1} (j + \ell) \label{cancel}
    \end{align}
    where the first equality follows from Lemma S7. The coefficient constant is simply counting the number of terms after expanding \eqref{key expansion}: in the first line of the above display, $\binom{m - 1}{j}$ comes from $\binom{m - 1}{j}$ of \eqref{key expansion}, $\left\{ \sum_{\cp' = 1}^{\cp \wedge j} \binom{j}{\cp'} \mathsf{p} (\cp, \cp') \right\}$ arises from the three summations after $\binom{m - 1}{j}$ of \eqref{key expansion}, and $(-1)^{j - \cp^{\dag}} \binom{j}{\cp^{\dag}}$ counts the number of terms with $\cp^{\dag}$ many $(\bbI - Q_{s})$'s, for $s \in \{0, 1, \cdots, j - 1\}$, upon expanding \eqref{product}.
    
    Another key observation is that after expansion, the expectation of \eqref{key expansion} is identically zero when $\cp^{\dag} > \cp$, leading to zero summands regardless of its coefficient constant. By virtue of this observation, we only need to consider the case when $\cp^{\dag} \leq \cp$. It takes elementary calculations to show there exists integers $\gamma_{1}, \cdots, \gamma_{\cp + \cp^{\dag} - 1}$ such that
    \begin{align*}
    \eqref{cancel} = \frac{(-1)^{- \cp^{\dag}}}{\cp^{\dag}! \cp!} \sum_{j = 0}^{m - 1} (-1)^{j} \binom{m - 1}{j} \left( j^{\cp + \cp^{\dag}} + \sum_{\ell = 1}^{\cp + \cp^{\dag} - 1} \gamma_{\ell} j^{\ell} \right).
    \end{align*}
    Hence when 
    \begin{equation}
    \label{cancel condition}
    m - 1 - (\cp + \cp^{\dag}) > 0,
    \end{equation}
    $\eqref{cancel} \equiv 0$ by differentiating the binomial identity as in Lemma S8. Since we have assumed that $\cp \geq \cp^{\dag}$, $\cp < \lceil (m - 1) / 2 \rceil$ suffices for \eqref{cancel condition} to hold. This concludes the proof.
    \end{proof}

    Following Lemma \ref{cancellation lemma}, the next important observation wraps up the proof.

    \begin{lemma}
    \label{well control lemma}
    For copy numbers satisfying $\cp \geq \lceil (m - 1) / 2 \rceil$,
    \begin{align*}
        |\mathrm{Equation \ } \eqref{bias_real_cp}| \leq \left( \frac{k m}{n} \right)^{\lceil \frac{\cp - 1}{2} \rceil \vee 1}.
    \end{align*}
    \end{lemma}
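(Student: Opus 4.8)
The plan is to bound $\fM_\cp$ (Equation~\eqref{bias_real_cp}) in the range $\cp \ge \lceil (m-1)/2\rceil$ that Lemma~\ref{cancellation lemma} leaves open, by pushing the non-commutative matrix-moment bookkeeping of the cancellation lemma one step further --- from ``the coefficient vanishes'' to ``the surviving terms are small''. I would work from the representation reached inside the proof of Lemma~\ref{cancellation lemma}: after rewriting $Q_s = \bbI - (\bbI-Q_s)$ and applying Lemma~\ref{lem:expansion}, each summand of~\eqref{bias_real_cp} is an expectation of a sandwiched product $\sfzbar_k(X_{m-1})^{\top}(\cdots)\sfzbar_k(X_m)$ carrying $\cp^\dag$ centred ``structural'' factors $\bbI-Q_s$ and $\cp$ centred factors $\bbI-\hat\Sigma$, times a coefficient of the type~\eqref{cancel}; and, by the cancellation lemma, only the shapes with $\cp + \cp^\dag \ge m-1$ contribute. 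The number of distinct shapes (and of the internal combinatorial configurations) is polynomial in $m$, and the coefficients~\eqref{cancel}, being $(m-1)$-st order finite differences of polynomials, are already heavily cancelled; I would absorb all of this overhead into a factor $m^{\lfloor\cp/2\rfloor\vee 1}$ (with room to spare, as $m \asymp \sqrt{\log n}$), reducing the task to bounding one such expectation by $(k/n)^{\lfloor\cp/2\rfloor\vee 1}$ times the nuisance factor on the right of~\eqref{EBm}.

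The heart of the proof is a leave-out and pairing argument. Let $I$ denote the indices occurring in the kernel --- the two ``bread'' samples $m-1,m$ together with the $\le m-2$ structural samples --- and decorrelate each of the $\cp$ factors as $\bbI-\hat\Sigma = (\bbI-\hat\Sigma^{(-I)}) - n^{-1}Q_I$, where $\hat\Sigma^{(-I)}$ is independent of the kernel. Expanding, each resulting summand is an expectation of a sandwiched product of mean-zero objects: the factors $\bbI-\hat\Sigma^{(-I)}$, the factors $\bbI-Q_s$, and --- whenever an $n^{-1}Q_I$ piece deposits a $Q_{m-1}$ or $Q_m$ adjacent to a bread slice --- the bread slices themselves. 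Since $\bbE[\bbI-\hat\Sigma^{(-I)}] = 0$ and $\bbE[\bbI-Q_s] = 0$, such a summand vanishes unless every mean-zero factor is matched, and there are only three matching mechanisms, each of operator scale $\lesssim k/n$: (a) two factors $\bbI-\hat\Sigma^{(-I)}$ against one another, producing a second moment $\bbE[(\hat\Sigma^{(-I)}-\bbI)^{\otimes 2}]$ of scale $k/n$; (b) a factor $\bbI-Q_s$ against the $n^{-1}Q_s$ term of some block, giving $n^{-1}(\bbE[Q_s^2]-\bbI)$ of scale $k/n$ by Assumption~\ref{cond:b}(i)--(ii); and (c) a bread slice against an $n^{-1}Q_{m-1}$ or $n^{-1}Q_m$ term, which after the scalar-extraction identity $\sfzbar_k(X_{m-1})^{\top}Q_{m-1} = A_{m-1}\|\sfzbar_k(X_{m-1})\|^2\,\sfzbar_k(X_{m-1})^{\top}$ again contributes $\lesssim k/n$. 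Because all $\cp$ centred factors $\bbI-\hat\Sigma$ must be matched, the number of matches is at least $\lceil\cp/2\rceil$ --- with equality when they pair only among themselves, one leftover being forced onto a structural or bread index when $\cp$ is odd. Hence each summand is $\lesssim (k/n)^{\lceil\cp/2\rceil}$ times the norms of $\bbE[\calE_a(\hat a;O)\sfzbar_k(X)]$ and $\bbE[\calE_b(\hat b;O)\sfzbar_k(X)]$, i.e. the nuisance quantities in~\eqref{EBm}.

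Combining the two pieces yields $|\fM_\cp| \lesssim m^{\lfloor\cp/2\rfloor\vee 1}(k/n)^{\lceil\cp/2\rceil}\cdot(\text{nuisance})$; since $\lceil\cp/2\rceil \ge \lfloor\cp/2\rfloor\vee 1$ and $k/n \le 1$, this is at most $(km/n)^{\lfloor\cp/2\rfloor\vee 1}$ times the nuisance factor, which is the assertion. Feeding the bound back through Lemma~\ref{lem:real_cp} and summing over $\cp$ --- a geometric series in $km/n = o(1)$, with the tail term of~\eqref{matrix_expansion} negligible once $J \asymp \log n$ --- then reproduces the bias bound~\eqref{EBm} of Theorem~\ref{thm:properties}, dominated by the term $\cp = \lceil(m-1)/2\rceil$. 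The main obstacle is to make the ``pairing'' heuristic of the previous paragraph rigorous when matched factors are interleaved or nested --- a structural factor lying between the two halves of an $\hat\Sigma^{(-I)}$-pair, a bread-matched $Q$ wedged inside another pair, and so on. Controlling these cases calls for an induction that peels off one matched pair at a time, in the same spirit as the proof of Lemma~\ref{lem:nc} but with centred Gram-matrix increments $\bbI-\hat\Sigma^{(-I)}$ in place of the raw $Q$-blocks there; this, together with a sufficiently careful bookkeeping of the coefficients~\eqref{cancel}, is where essentially all of the technical effort lies.
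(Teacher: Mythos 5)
Your overall architecture matches the paper's: both proofs bound $\fM_{\cp}$ for $\cp\geq\lceil(m-1)/2\rceil$ by (a) counting the surviving configurations and (b) bounding each surviving expectation through a pairing/moment argument that extracts one factor of $k/n$ per forced match, and your exponent $\lfloor\cp/2\rfloor\vee1$ agrees with the stated $\lceil(\cp-1)/2\rceil\vee1$. Your part (b) is in fact more explicit than what the paper writes in the main text (it defers the per-expectation bound to a supplementary lemma), and your identification of interleaved/nested matchings as the hard case, to be handled by an induction in the spirit of Lemma \ref{lem:nc}, is exactly where the real work sits.

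The genuine gap is in part (a). You assert that ``the number of distinct shapes \ldots is polynomial in $m$'' and absorb it into $m^{\lfloor\cp/2\rfloor\vee1}$. But the raw number of configurations $(j,\cp',S,\{\ell_s\})$ in \eqref{bias_real_cp} involves $\sum_{j=1}^{m-1}\binom{m-1}{j}\sum_{\cp'}\binom{j}{\cp'}\mathsf{p}(\cp,\cp')$, which is exponential in $m$, not polynomial. The count only collapses because of cancellation across $j$ coming from the signs $(-1)^j\binom{m-1}{j}$: the paper evaluates the alternating sum exactly, $\sum_{j}(-1)^{j}\binom{m-1}{j}\binom{j+\cp-1}{\cp}=(-1)^{m-1}\binom{\cp-1}{(\cp-m+1)\vee0}$ (their \eqref{control comb}, via a supplementary binomial identity), which is $\pm1$ when $\cp<m-1$ and at most $m^{\cp}$ otherwise. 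You invoke the signed cancellation to kill the small-$\cp$ terms in Lemma \ref{cancellation lemma} but then drop it when counting in the surviving regime; without it, your overhead factor is of order $2^{m}$ rather than $\mathrm{poly}(m)$, which does not yield the clean non-asymptotic bound $(km/n)^{\lceil(\cp-1)/2\rceil\vee1}$ claimed in the lemma (it would only give an $o(1)$ statement under the scaling $m\asymp\sqrt{\log n}$, $k\lesssim n/\log^2 n$). To close the gap you need to carry the alternating-sum evaluation into the counting step, exactly as the paper does.
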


    \begin{proof}
    This proof inherits the notations defined in the proof of Lemma \ref{cancellation lemma}. We consider the case $\cp \geq \lceil (m - 1) / 2 \rceil$, which, as shown in the previous lemma, is not identically zero. We need to count the number of non-zero expectations, which is easier to work out using the representation \eqref{key expansion 2}.

    For any given copy number $\cp$, the number of non-zero expectations of the form \eqref{key expectation} is
    \begin{align}
    \sum_{j = 1}^{m - 1} (-1)^{j} \binom{m - 1}{j} \binom{j + \cp - 1}{\cp} = (-1)^{m - 1} \binom{\cp - 1}{(\cp - m + 1) \vee 0} \label{control comb}
    \end{align}
    by employing Lemma S9 in the online supplements. When $\cp < m - 1$, $\eqref{control comb} = (-1)^{m - 1}$; whereas when $\cp \geq m - 1$:
    \begin{itemize}
    \item if $m \geq 4$ and $\cp \geq 5$
    \begin{align*}
    |\eqref{control comb}| & = \binom{\cp - 1}{m - 2} \leq (\cp - 1)^{m - 2} \leq m^{\cp};
    \end{align*}
    \item if $m = 4$ and $\cp = 4$,
    \begin{align*}
    |\eqref{control comb}| = \binom{3}{2} = 3 < m^{\cp};
    \end{align*}
    \item finally, if $m = 3$,
    \begin{align*}
    |\eqref{control comb}| = \cp - 1 < m^{\cp}.
    \end{align*}
    \end{itemize}

    The proof is completed by bounding the absolute value of these non-zero expectations of the form \eqref{key expectation} by $k^{\cp}$ using Lemma S1, leading to the claim $$|\eqref{bias_real_cp}| \leq \left( \frac{k m}{n} \right)^{\lceil \frac{\cp - 1}{2} \rceil \vee 1}.$$
    \end{proof}

    \begin{remark}
    It is possible to improve the upper bound $m^{\cp}$ of $|\eqref{control comb}|$. First, the upper bound for the binomial coefficient used here is not sharp. Second, not all the terms counted in \eqref{control comb} (i.e. the term $\binom{j + \cp - 1}{\cp}$) are nonzero. We decide not to pursue an improvement over $m^{\cp}$ for aesthetic purpose.
    \end{remark}

    Finally, combining the above results, we have the desired kernel estimation bias bound given in Theorem \ref{thm:properties}.
    
    \end{appendices}

\end{document}